\documentclass[11pt,reqno,a4paper]{amsart}
\usepackage[normalem]{ulem}
\usepackage[colorlinks]{hyperref}
\usepackage{amssymb,epsfig,graphics}
\usepackage{amsmath}
\usepackage{enumerate}
\usepackage{comment}
\usepackage{tikz}
\usetikzlibrary{arrows}

\newcommand{\vertiii}[1]{{\left\vert\kern-0.25ex\left\vert\kern-0.25ex\left\vert #1 
    \right\vert\kern-0.25ex\right\vert\kern-0.25ex\right\vert}}

\newtheorem{thm}{Theorem}[section]
\newtheorem{lem}[thm]{Lemma}
\newtheorem{prop}[thm]{Proposition}

\newtheorem{cor}[thm]{Corollary}
\newtheorem{defn}[thm]{Definition}

\newtheorem{rem}[thm]{Remark}

\newcommand{\marginnotemx}[1]
           {\mbox{}\marginpar{\tiny\raggedright\hspace{0pt}{{\bf Maxence}$\blacktriangleright$  {\color{magenta} #1}}}}
           
\newcommand{\marginnotewb}[1]
           {\mbox{}\marginpar{\tiny\raggedright\hspace{0pt}{{\bf Wael}$\blacktriangleright$  {\color{blue} #1}}}}

\def\ZZ{\mathcal{Z}}

\def\D{\Delta}
\def\KK{\mathbb{K}}
\def\NN{\mathbb{N}}

\def\RR{\mathbb{R}}

\let\eps=\varepsilon

\def\B{\mathcal{B}}
\def\D{\mathcal{D}}

\def\ZZ{\mathcal{Z}}
\def\RR{{\mathbb R}}
\def\v{{\mathbf v}}
\def\p{{\mathbf p}}
\def\q{{\mathbf q}}

\def\1{{{\mathit 1} \!\!\>\!\! I} }
\renewcommand{\liminf}{\mathop{{\underline {\hbox{{\rm lim}}}}}}
\renewcommand{\limsup}{\mathop{{\overline {\hbox{{\rm lim}}}}}}


%
%
%










\renewcommand{\ZZ}{\mathbb{Z}}
\newcommand{\N}{\mathbb{N}}

\newcommand{\E}{\mathbb{E}}
\newcommand{\vg}{\textbf v}
\newcommand{\pg}{\textbf{p}}

\newcommand{\vx}{\textbf{x}}
\newcommand{\vq}{\textbf{q}}
\newcommand{\vp}{\textbf{p}}
\newcommand{\LG}{\mathcal L}

\newcommand{\hh}{\textbf{h}}
\newcommand{\w}{\textbf {w}}
\begin{document}
\title{Rare events statistics for $\ZZ^d$ map lattices coupled by collision}
\author{Wael Bahsoun}
\address{Department of Mathematical Sciences, Loughborough University,
Loughborough, Leicestershire, LE11 3TU, UK}
\email{W.Bahsoun@lboro.ac.uk}
\author{Maxence Phalempin}
\address{School of Mathematics and Statistics, University of New South Wales, Sydney NSW 2052 Australia.}
\email{m.phalempin@unsw.edu.au}

\keywords{Transfer operators, rare events, coupled map lattices} 
\thanks{The research of W. Bahsoun was supported by EPSRC grant EP/V053493/1. The work of M. Phalempin was partially supported by ARC Laureat Fellowship FL230100088 and by FABuLOuS project within ``Bando Ricercatori a Firenze". M. Phalempin thanks the Department of Mathematical Sciences at Loughborough University for its hospitality during two visits in the course of this work.\\
\noindent{\bf Conflict of interest statement:} The authors have no conflict of interest to disclose.\\
\noindent{\bf Data availability statement:} No datasets were generated or analysed during the current study.
}

\begin{abstract}
Understanding the statistics of collisions among locally confined gas particles poses a major challenge. In this work we investigate $\ZZ^d$-map lattices coupled by collision with simplified local dynamics that offer significant insights for the above challenging problem. We obtain a first order approximation for the first collision rate at a site $\p^*\in \ZZ^d$ and we prove a distributional convergence for the first collision time to an exponential, with sharp error term. Moreover, we prove that the number of collisions at site $\p^*$ converge in distribution to a compound Poisson distributed random variable. Key to our analysis in this infinite dimensional setting is the use of transfer operators associated with the \emph{decoupled} map lattice at site $\p^*$.
\end{abstract}

\subjclass[2000]{37A25, 37A30, 37D20}
\maketitle
\markboth{W. Bahsoun and M. Phalempin}{Rare events statistics for $\ZZ^d$ map lattices}
\section{Introduction}
The study of rare events in dynamical systems is one of the most active branches of research in ergodic theory. An example of a rare event in dynamical systems appears in the study of `open' systems where orbits infrequently escape from the domain, typically by falling into a small `hole' in the phase space, see \cite{BBF14, DY06, PU17}, the recent article \cite{DT23} and references therein. Another important example of a rare event in dynamical systems appears in the analysis of extreme value statistics, see for instance \cite{LFF16}, the recent work \cite{FFRS20, H24} and references therein. In parallel the study of large, possibly infinite-dimensional, systems is of paramount importance in many fields, including mathematics and physics, see for example \cite{DL11,P20,R12,Y13} and references therein. Coupled map lattices are basic models of large dynamical systems. They were investigated by many authors \cite{CF05}, starting with Kaneko \cite{Ka83}, and were first put in a rigorous setting by Bunimovich and Sinaĭ \cite{BS88}. The models studied in \cite{BS88,CF05,Ka83} cover large coupled systems with weak interaction among its units. However, in several important coupled systems, such as those of locally confined gas particles \cite{GG08}, the interaction among its neighboring units is \emph{rare} but \emph{strong}. Proving statistical properties in models such as that of \cite{GG08} is currently untractable\footnote{From ergodic theory point of view, the most successful technique in treating coupled map lattices with discontinuities and singularities is the transfer operator technique, which we employ in this work. Despite tremendous efforts, see the introduction of \cite{BL21} for a recent account on the current state of the art, such a technique is still very far from being ready to treat directly a model like that of \cite{GG08}.}. In this work we investigate $\ZZ^d$-map lattices coupled by collision with simplified local dynamics, introduced by Keller and Liverani \cite{KL09}, where we prove statistical properties that offer significant insights for models such as that of \cite{GG08}.

The system that we consider has two distinctive features: the rarity of the event that we study appears \emph{naturally} in the model, and the dynamical system that describes the model is \emph{infinite dimensional}. More precisely, the infinite dynamical system is defined by a $\ZZ^d$-coupled map lattice where the site dynamic is a chaotic map of the interval, while the coupling is defined via \emph{rare} interaction between two neighbours. But when two neighbouring sites interact their states change drastically: when the dynamics of two neighbouring sites in the lattice fall simultaneously in their `collision' zones, the states of the neighbouring sites get interchanged. By a loose analogy with classical mechanics, this type of coupling is called `coupling by collision'. Indeed, as we mentioned earlier, the model introduced in \cite{KL09} is reminiscent to collision models studied by mathematicians \cite{BGNST} and physicists \cite{GG08}. 

In this work, in the framework of the infinite dimensional model of \cite{KL09}, we obtain a first order approximation for the \emph{first collision rate} at a site $\p^*\in \ZZ^d$ and we obtain a law for the corresponding first hitting time, with sharp error term. Moreover, we prove that the number of collisions at site $\p^*$ converge in distribution to a compound Poisson distributed random variable. Our techniques are based on analysing the spectrum of rare event transfer operators, which to the best of our knowledge have not been explored before in an \emph{infinite dimensional} setting.  A key idea is to observe that such operators can be associated with the \emph{decoupled}\footnote{See equation \eqref{decoupling} for a precise definition of the decoupled dynamics at site $\p^*$.} map lattice at site $\p^*$. 

For a finite dimensional version of the model of \cite{KL09} a different rare event\footnote{In the work of \cite{BS22}, the authors studied the first collision at any site in a finite dimensional system, and considered the first rate of such a collision divided by the number of sites.} was considered in \cite{BS22}. In the work of \cite{BS22}, the expression of the coupling map did not matter, and basically the rare event in that work can be seen as a rare event of a finite dimensional product map (a rare event for completely decoupled and finite dimensional system). Unlike the work of \cite{BS22} we deal with an \emph{infinite dimensional system}. Moreover, the rare event that we study gets really influenced by the coupling map (equation \eqref{coupling} below) of the system. At the technical level, it is worth noting that when dealing with an infinite dimensional phase space, $\ZZ^d$ in our case, standard spectral techniques are not available and hence delicate verification of the assumptions of the framework of \cite{KL99,KL09',K12} is needed to obtain limit laws for the rare event. In addition, the computation of the so-called `extremal index', which is needed in the derivation of the first hitting time law, is quite involved in our setting.


The paper is organised as follows. In section \ref{sec:set} we introduce the collision model, the rare event and we state our main results, Theorem \ref{thm:main1}, Theorem \ref{thm:main2} and Theorem \ref{thm:main3}. We end section \ref{sec:set} by presenting an example that satisfies the main results where we compute the corresponding probabilistic quantities in a concrete situation. Section \ref{sec:pf1} contains the proof of Theorem \ref{thm:main1}, while sections \ref{sec:pf2} and \ref{sec:proofthm3} contain the proofs of Theorems \ref{thm:main1} and \ref{thm:main3} respectively. Section \ref{sec:LYaux} contains the proof of a uniform Lasota-Yorke inequality for an auxiliary operator, which is used on several occasions for different specific operators in previous sections of the paper.

\section{setting and statement of the main result}\label{sec:set}
Let $I=[0,1]$ and $m$ be Lebesgue measure on it. Let $\tau : I\to I$ be a piecewise $C^2$ and piecewise onto, uniformly expanding map with expanding factor $\alpha>1$; i.e., $\exists$ a finite partition $0=\xi_0<\dots<\xi_M=1$ such that, denoting $I_i:=(\xi_{i-1},\xi_{i})$,
$\tau_{|I_i}$ is monotone and onto, $C^2$, with a $C^2$ extension to $\overline I_i$ and  $|\tau'(x)|\geq \alpha$. It is well known that such maps admit an absolutely continuous invariant measure $\mu_\tau$ with $d\mu_\tau/dm=\rho_\tau$; moreover $\inf_{x\in I}\rho_\tau(x)>0$, the system is mixing \cite{BG} and $\rho_{\tau}\in C^1(I)$, see for instance Lecture 1 of \cite{Li04}\footnote{Although the lecture notes of \cite{Li04} deals with smooth expanding circle maps, the same ideas apply in the case of piecewise smooth onto expanding maps of the interval.}. 


For $\vx:=(x_{\pg})_{\pg \in \ZZ^d} \in I^{\ZZ^d}$ let $T_0: I^{\ZZ^d}\rightarrow I^{\ZZ^d} $ be the product map: 
$$
\left(T_0(\vx)\right)_{\pg}=\tau (x_{\pg}).
$$
Let $V^+:=\{e_i, 1\leq i\leq d\}$ be the standard basis of $\RR^d$ and define $V:=V^+ \cup -V^+$. For $\eps>0$, let $\{A_{\eps,-\v}\}_{\v\in V}\subset I$ be a set of disjoint open intervals, each of length $\eps$. Following \cite{KL09}, we consider the coupling
\begin{equation}\label{coupling}
(\Phi_\eps(\vx))_\p=\begin{cases}
       x_{\p+\v} \quad \text{if } x_\p\in A_{\eps,\v}\text{ and } x_{\p+\v}\in A_{\eps,-\v} \text{ for some } \v\in V\\
       x_\p  \quad\quad\text{otherwise.}
       \end{cases}
\end{equation}
We thus define the coupled dynamics $T_\eps:X\to X$ as the composition 
\begin{equation}\label{eq:fullymap}
T_\eps:=T_0\circ\Phi_\eps.
\end{equation}
\subsection{Banach spaces and notation} Before setting the problem and stating the main results, we define Banach spaces and fix some notation which will be needed in the sequel.\\ 
\noindent$\bullet$ Let $\mathcal D$ be the space of functions on $I^{\mathbb Z^d}$ that depend only on a finite number of coordinates and are locally differentiable. Let 
$$\mathcal D_1 :=\{\varphi\in\mathcal D : |\varphi|_\infty\le 1\}.$$ 
For any Borel complex measure $\mu$ defined on $I^{\mathbb Z^d}$, where $I^{\mathbb Z^d}$ is equipped with the product topology, we define
\begin{align}
|\mu|&=\sup_{\varphi\in \mathcal D_1}\mu(\varphi)\\
\|\mu\|&=\sup_{\p\in \ZZ^d}\sup_{\varphi\in\mathcal D_1}\mu(\partial_{\p}\varphi)    
\end{align}
that is, $\|\cdot\|$ is the bounded variation norm and $|\cdot|$ is the total variation norm. Note that $\mathcal B:=\{\|\mu\|<\infty\}$ is a Banach space as mentioned \cite{KL05}; moreover, elements in $\mathcal B$ consist of measures whose finite dimensional marginals are absolutely continuous with respect to Lebesgue and the corresponding density is a function of bounded variation. Throughout the paper we use the following notation:\\ 

\noindent$\bullet$ For $\Lambda \subset \ZZ^d$, let $\pi_{\Lambda} : I^{\ZZ^d} \rightarrow I^{\Lambda}$ be the canonical projection. When $\Lambda:=\{\q\}$, by a slight abuse of notation, we will write $\pi_\q$ instead of $\pi_{\{\q\}}$. 

\noindent$\bullet$ By $|\Lambda |$ we denote the cardinality of the set $\Lambda$.

\noindent$\bullet$ $m_{\ZZ^d}:=Leb^{\otimes \ZZ^d}$ is the Lebesgue measure on $I^{\ZZ^d}$ whereas $m_\Lambda:=\pi_\Lambda*m_{\ZZ^d}$ is the Lebesgue measure on $I^{\Lambda}$. 

\noindent$\bullet$ $\pi_\Lambda*\nu$ is the image measure of $\nu$ through $\pi_\Lambda$.  When $\nu\in \B$, then $\nu_{\Lambda}$ is the density of $\pi_\Lambda*\nu$ with respect to $m_{\Lambda}$.

\noindent$\bullet$ Finally, we use $|f|_{BV}$ to denote the usual bounded variation norm for $f:I\to\mathbb R$.

\subsection{Rare events at one site}
In this work we study rare events at site $\vp^*\in\mathbb Z^d$. For this purpose we study the asymptotics as the size of the collision intervals involved with site $\vp^*\in\mathbb Z^d$ shrinks to $0$. To do this we use $\delta\in(0,\eps]$ to label such intervals, i.e. $A_{\delta,\v}$ and assume that $m(A_{\delta,\v})=\delta$. All other collision intervals that are not involved with site $\p^*$ will be denoted, as before, by $A_{\eps,\v}$ and their size is independent of $\delta$. Consequently, form now on we start to denote the fully coupled system by $T_{\eps,\delta}$. 
For $\v\in V$, let
$$X^{\vg}_{\delta}(\vp^*):=\{\vx\in I^{\ZZ^d}: x_{\pg^*}\notin A_{\delta,\vg} \text{ or } x_{\pg^*+\vg}\notin A_{\delta,-\vg}\}$$
and define
$$X_{0,\delta}(\vp^*):=\bigcap_{\vg\in V}X^{\vg}_{\delta}(\vp^*),$$
\begin{equation}
H_\delta(\vp^*):=\label{eq:rare-event}
I^{\ZZ^d}\setminus X_{0,\delta}(\vp^*).
\end{equation}
Through the paper, for a set\footnote{When not mentioned otherwise, the complement is taken within $I^{\ZZ^d}$.} $E$ and a subset $A \subset E$ we will use the notation $^cA:= E\backslash A$ to refer to the complement of a set $A$ in $I^{\ZZ^d}$. In that regard, $X_{0,\delta}(\vp^*)=^cH_{\delta}(\p^*)$. Moreover, let
\begin{equation}\label{eq:survival}
    X_{n,\delta}(\vp^*)=\cap_{i=0}^{n-1} T_{\eps,\delta}^{-i}X_{0,\delta}(\vp^*)
\end{equation}
and notice that $X_{n,\delta}(\vp^*)$ is the set of points whose orbits under $T_{\eps,\delta}$ did not see a collision at site $\vp^*$ up to time $n-1$. It is also called the survival set up to time $n-1$.

We endow $I^{\ZZ^d}$ with the product topology and we recall that $m_{\ZZ^d}$ denotes Lebesgue measure on $I^{\ZZ^d}$. 
Suppose that
\begin{equation}\label{eq:collisionrate}
    -\lim_{n \to\infty}\frac{1}{n}\ln m_{\ZZ^d}(X_{n,\delta}(\vp^*))
\end{equation}
exists. Then the quantity in the equation \eqref{eq:collisionrate} measures asymptotically, with respect to $m_{\ZZ^d}$, the fraction of orbits that did not see a first collision at site $\vp^*$. We call such a quantity the \emph{first collision rate} under $T_\eps$ at site $\vp^*$ with respect to the measure $m_{\ZZ^d}$. 
We now introduce a `decoupling' operator at site $\p^*$, which will play a crucial role in our work. Let
\begin{equation}\label{decoupling}
(\Phi_{\eps,\p^*}(\vx))_\q=\begin{cases}
(\Phi_{\eps}(\vx))_\q \quad \text{if } \q-\p^*\notin V\cup\{0\}\\
       (\Phi_{\eps}(\vx))_\q \quad \text{if } \v=\q-\p^* \text{ and } x_{\q}\notin A_{\eps,-\v}\\
       x_\q  \quad\quad\text{otherwise.}
       \end{cases}
\end{equation}
Notice that $\Phi_{\eps,\p^*}$ differs from $\Phi_{\eps}$, which was defined in \eqref{coupling}, only in the coordinate $x_{\p^*}$: under $\Phi_{\eps,\p^*}$,
$x_{\p^*}$ is independent of the other coordinates.
Now define
$$T_{\eps,\vp^*}:=T_0\circ\Phi_{\eps,\p^*}$$
and notice that $X_{n,\delta}(\vp^*)$, the survival set defined in \eqref{eq:survival}, is also given by

\begin{equation}\label{eq:survivaldec}
    X_{n,\delta}(\vp^*)=\cap_{i=0}^{n-1} T_{\eps, \p^*}^{-i}X_{0,\delta}(\vp^*);
\end{equation}
that it is the survival set can be defined via the decoupled dynamics at site $\p^*\in \mathbb Z^d$. This is a very important idea in this work for the following reasons: firstly, the decoupled dynamics $T_{\eps,\vp^*}$ does not depend on $\delta$, consequently if we prove that it admits a physical measure, then the measure will depend on $\eps$ but not on $\delta$ and hence this will make this measure suitable to study asymptotics in $\delta$ for the rare event (see Theorem \ref{thm:main1} below). More importantly, if we prove a spectral gap, on $\mathcal B$, for the transfer operator associated with $T_{\eps,\vp^*}$, all its spectral data are independent of $\delta$. The latter will be an important point to obtain a spectral gap for the corresponding rare event transfer operator, via a perturbation argument. 

We define the transfer operators $\LG_{\eps, \vp^*}$ and $\hat\LG_{\eps,\delta, \vp^*}$ associated with $T_{\eps, \vp^*}$ and the rare event respectively as follows. For $\varphi\in\mathcal D$ and $\mu$ a Borel complex measure
$$\int\varphi d\LG_{\eps, \vp^*}\mu=\int\varphi\circ T_{\eps,\p^*}d\mu$$
and let
\begin{equation}\label{eq:ophole}
    \int \varphi d\hat \LG_{\eps, \delta, \vp^*}(\mu)=\int \varphi \circ T_{\eps,\vp^*} 1_{X_{0,\delta}(\p^*)} d\mu.
\end{equation}
Note that by definition
$$|\LG_{\eps,\delta}\mu|\le |\mu|\quad{ and }\quad |\hat\LG_{\eps,\delta,\p^*}\mu|\le |\mu|.$$
The following theorem is the first main result of the paper:

\begin{thm}\label{thm:main1}
For sufficiently small $\eps>0$ 
\begin{enumerate}
\item $\LG_{\eps, \vp^*}$ and $\hat\LG_{\eps,\delta, \vp^*,}$ admit a spectral gap on $\mathcal B$;

\item the first collision rate with respect to $m_{\ZZ^d}$ exists and satisfies 
$$-\lim_{n \to\infty}\frac{1}{n}\ln m_{\ZZ^d}(X_{n,\delta}(\vp^*))=-\ln\lambda_{\eps,\delta},$$
where $\lambda_{\eps,\delta}\in(0,1)$ is the dominant simple eigenvalue of $\hat\LG_{\eps, \vp^*,\delta}$.
\end{enumerate}
\end{thm}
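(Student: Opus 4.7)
The plan is to exploit the $\delta$-independence of the decoupled operator $\LG_{\eps,\vp^*}$: since $\hat\LG_{\eps,\delta,\vp^*}$ differs from $\LG_{\eps,\vp^*}$ only by pre-multiplication by $1_{X_{0,\delta}(\vp^*)}$, a perturbation whose Lebesgue strength is governed by $m_{\ZZ^d}(H_\delta(\vp^*))=O(\delta^2)$, I would first establish a spectral gap for $\LG_{\eps,\vp^*}$ on $\B$ and then apply the Keller--Liverani perturbation theorem \cite{KL99} to transfer the gap to $\hat\LG_{\eps,\delta,\vp^*}$ for $\delta$ small. Part (2) then follows from the standard spectral decomposition applied to the identity
\begin{equation*}
m_{\ZZ^d}(X_{n,\delta}(\vp^*))=(\hat\LG_{\eps,\delta,\vp^*}^n m_{\ZZ^d})(I^{\ZZ^d}).
\end{equation*}

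For the spectral gap of $\LG_{\eps,\vp^*}$, I would invoke the uniform Lasota--Yorke inequality proved in Section \ref{sec:LYaux},
\begin{equation*}
\|\LG_{\eps,\vp^*}^n\mu\|\le C\theta^n\|\mu\|+K|\mu|,
\end{equation*}
with constants independent of $\delta$ and for $\eps$ small, combined with the contraction $|\LG_{\eps,\vp^*}\mu|\le|\mu|$ already noted in the text; a Hennion--Ionescu-Tulcea argument then yields quasi-compactness of $\LG_{\eps,\vp^*}$ on $\B$. To upgrade quasi-compactness to a genuine spectral gap at $1$, one verifies that $T_{\eps,\vp^*}$ admits a unique absolutely continuous mixing invariant measure in $\B$; this can be obtained by adapting the analysis of Keller--Liverani \cite{KL09} for the fully coupled system, since the decoupling at $\vp^*$ only modifies a finite window of coordinates while preserving the remaining lattice structure.

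The same Lasota--Yorke inequality of Section \ref{sec:LYaux} applies uniformly in $\delta$ to $\hat\LG_{\eps,\delta,\vp^*}$, and the remaining Keller--Liverani perturbation hypothesis is furnished by
\begin{equation*}
\bigl|(\LG_{\eps,\vp^*}-\hat\LG_{\eps,\delta,\vp^*})\mu\bigr|=\bigl|\LG_{\eps,\vp^*}(1_{H_\delta(\vp^*)}\mu)\bigr|\le\mu(H_\delta(\vp^*))\le C\delta^2\|\mu\|,
\end{equation*}
where the last bound uses that $H_\delta(\vp^*)$ depends only on the finitely many coordinates $\vp^*,\vp^*\pm e_i$, whose joint marginal density under $\mu$ is bounded (up to a multiplicative constant) by $\|\mu\|$ on a box of Lebesgue measure $O(\delta^2)$. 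The perturbation theorem then yields, for all small $\delta$, a simple dominant eigenvalue $\lambda_{\eps,\delta}\in(0,1)$ with the remaining spectrum in a strictly smaller disc. Writing $\hat\LG_{\eps,\delta,\vp^*}^n=\lambda_{\eps,\delta}^n\Pi_{\eps,\delta}+R_{\eps,\delta}^n$ and noting that $(\Pi_{\eps,\delta}m_{\ZZ^d})(I^{\ZZ^d})\ne 0$ (because the perturbed dual eigenfunctional is close to Lebesgue integration, which is the dominant dual eigenfunctional of $\LG_{\eps,\vp^*}$), one obtains $-\tfrac{1}{n}\ln m_{\ZZ^d}(X_{n,\delta}(\vp^*))\to-\ln\lambda_{\eps,\delta}$.

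The main obstacle is verifying the Keller--Liverani perturbation hypothesis in this infinite-dimensional, non-compact setting: one has to bound $|\LG_{\eps,\vp^*}(1_{H_\delta(\vp^*)}\mu)|$ by $\delta^2\|\mu\|$ uniformly through the finite-dimensional marginal involved in $H_\delta(\vp^*)$, and all Doeblin--Fortet-type arguments must be pushed through the pair $(\|\cdot\|,|\cdot|)$ without any compact embedding. A secondary delicate point is the verification that $1$ is a simple eigenvalue of $\LG_{\eps,\vp^*}$, which hinges on mixing of the decoupled coupled map lattice.
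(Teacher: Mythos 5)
Your outline tracks the paper's overall architecture (spectral gap for $\LG_{\eps,\vp^*}$, Keller--Liverani stability to pass to $\hat\LG_{\eps,\delta,\vp^*}$, then spectral decomposition of $\hat\LG_{\eps,\delta,\vp^*}^n m_{\ZZ^d}$), but the central step -- obtaining quasi-compactness / a spectral gap for $\LG_{\eps,\vp^*}$ -- is precisely where your proposal breaks down, and where the paper does something genuinely different.

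You invoke a Hennion--Ionescu-Tulcea argument to upgrade the uniform Lasota--Yorke inequality to quasi-compactness. That theorem requires the unit ball of the strong norm $\|\cdot\|$ to be relatively compact in the weak norm $|\cdot|$, and in this infinite-dimensional lattice setting it is not; the paper states this explicitly before Lemma \ref{lem:close}, and you yourself flag it at the end, but the earlier part of your argument still leans on it. Without the compact embedding the Lasota--Yorke inequality alone does not deliver quasi-compactness. The paper instead imports the spectral gap already proved for the \emph{fully coupled} operator $\LG_{\eps,\delta}$ in \cite{KL06,KL09} (a result obtained by coupling arguments, not by Doeblin--Fortet), combines this with the Lasota--Yorke inequality of Proposition \ref{propLasota-perturbed0}, and with the estimate $|(\LG_{\eps, \vp^*}^m- \LG_{\eps,\delta}^m)\mu |\leq C_m \delta\|\mu\|$ to transfer the contraction $\|\LG_{\eps, \vp^*}^n\mu\|\le C\gamma^n\|\mu\|$ (on $\mu(1)=0$) over to the decoupled operator. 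The spectral gap for $\LG_{\eps,\vp^*}$ is then read off directly via the rank-one projection $\Pi_1\mu=\mu(1)\mu_{\eps,\p^*}$. This perturbation-from-the-coupled-system step is the key ingredient your proposal is missing; your appeal to ``adapting the analysis of \cite{KL09}'' gestures at the right reference but does not identify this mechanism.

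A secondary error: your bound $\mu(H_\delta(\vp^*))\le C\delta^2\|\mu\|$ for general $\mu\in\B$ is too strong. For $\mu\in\B$ the relevant two-dimensional marginal density is controlled in $BV(I^2)$, and $BV(I^2)$ does not embed into $L^\infty(I^2)$, so you cannot bound the density pointwise by $\|\mu\|$ and multiply by the Lebesgue measure of the $\delta\times\delta$ box. The paper's Lemma \ref{lem:close} uses Cauchy--Schwarz and a Sobolev inequality and obtains only $\mathcal O(\delta)$ for general $\mu$; the $\mathcal O(\delta^2)$ rate appears later (Lemma \ref{lemrefdenom}) only for the particular measure $\mu_{\eps,\p^*}$, exploiting its decoupling at $\p^*$. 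This does not affect the conclusion of Theorem \ref{thm:main1} since $\mathcal O(\delta)$ still vanishes, but the justification as you wrote it is incorrect. Finally, your reason that $(\Pi_{\eps,\delta}m_{\ZZ^d})(I^{\ZZ^d})\neq0$ (``perturbed dual eigenfunctional close to Lebesgue integration'') is plausible but not what the paper does; the paper argues positivity directly by evaluating $\lambda_{\eps,\delta}^{-n}\hat\LG_{\eps,\delta,\vp^*}^n\nu$ on positive test data and letting $n\to\infty$.
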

In the next theorem, we study the asymptotic as $\delta \to 0$, but keeping $\eps>0$ fixed small enough as in Theorem \ref{thm:main1}. Denote by $(a_\v, a_{-\v})$ the point that  $A_{\delta,\v}\times A_{\delta,-\v}$ shrinks to as $\delta \to 0$ and  $S:=\{(a_\v, a_{-\v})\}_{\v\in V^+}$ be the set of all such points. We assume that each $a_\v\in I_i$, for some monotonicity interval $I_i$ of $\tau$ as defined at the beginning of section \ref{sec:set} and  we assume that $a_\v$ is the midpoint of $A_{\delta,\v}$, $\v\in V$. Let
\begin{equation}\label{eq:collisionset}
\mathcal N_{\vp^*}=\{(\q,\v):\, \q\in \vp^*+V, \v=\vp^*-\q\}\cup\{\vp^*\}\times V
\end{equation}
and introduce for $(\q,\v)\in\mathcal N_{\vp^*}$
$$A_{\delta,\v}(\q)=\{\vx\in I^{\ZZ^d}\, \text{ and } x_{\q}\in A_{\delta, \v}\}.$$
Note that $\{A_{\delta,\v}(\q)\}_{(\q,\v)\in\mathcal N_{\vp^*}}$
consists only of collision sets that lead to a collision with the dynamics at $\vp^*$. We introduce the subset $S^{rec}\subset S$ that consists of recurrent points of $S$, under the dynamics $T_{\eps,\delta}$, in the following sense:

\begin{equation}\label{eq:recset}
\begin{split}
    S^{rec}:=&\{(a_\v, a_{-\v}) \in S, \exists k \in \N, \exists \vx\in I^{\ZZ^d}, (x_{\p^*},x_{\p^*+\v})=(a_\v, a_{-\v}) \text{ and }\\
    &\exists (\q,\v')\in \mathcal N_{\p^*}\text{ so that }   (\tau^ka_{\v}, \tau^ka_{-\v})=(a_{\v'}, a_{-\v'})\\ &\text{ and } \Psi_k^{\p^*+\v}(\vx)=\q\},
    \end{split}
\end{equation}
where $\Psi_k^{\vp} : I^{\mathbb Z^d} \to \mathbb Z^d$ is an \textbf{index} map such that $(T_{\eps,\p^*}^k \vx)_{\Psi_k^{\vp}(\vx)}=\tau^k(x_{\p})$. It is defined recursively as follows:
\begin{align*}
\left\{\begin{array}{r l c}
  \Psi_1^{\vp}(\vx)&=&\p+\v \text{ if } x_\p\in A_{\eps,\v}\text{ and } x_{\p+\v}\in A_{\eps,-\v} \\
  &&\text{ for some } \v\in V \text{ whenever } \p^* \notin \{\p,\p+\v\} \\ 
\Psi_1^{\vp}(\vx)&=& \p \text{ otherwise}\\
\Psi_k^{\vp}(\vx)&=&\Psi_{k-1}^{\vp}(\vx)+\Psi_1^{\Psi_{k-1}^{\vp}(\vx)}(T_{\eps,\vp^*}^{k-1}\vx) \text{ if }k\neq 0.
\end{array}
\right.
\end{align*}
Note that the set $S^{rec}$ can strictly contain the set of periodic points of $T_{\eps,\delta}$ in $S$. Indeed, some point $S^{rec}$ can return to $S^{rec}$ without being periodic of $T_{\eps,\delta}$. For each $(a_{\v},a_{-\v})\in S^{rec}$, let $\mathbb K(\v,\v')$ be the set of integers $k$ such that $(\tau^{k+1}a_{\v},\tau^{k+1}a_{-\v})=(a_{\v'},a_{-\v'})$. To each such $k$ we associate the set $J_k$ composed of $j_1\leq \dots \leq j_r\leq k$, the recurrent time into $S^{rec}$ up to time $k$ (i.e , $(\tau^{j_i}(a_{\v}),\tau^{j_i}(a_{-\v}))=(a_{\w_{j_i}},a_{-\w_{j_i}})$).\\

The following notation will appear below in the statement of the second main result of the paper. Using the result of Theorem \ref{thm:main1}, we denote the eigenmeasure corresponding to the eigenvalue 1 for the operator $\LG_{\eps, \vp^*}$ by $\mu_{\eps, \p^*}\in\mathcal B$. Fix the box 
$$\Lambda:=\vp^* + \{-(k+1),\dots,(k+1)\}^d$$ 
around the vector $\vp^*$ and let $\rho_{\eps,\Lambda}$ be the density of $\pi_{\Lambda}*\mu_{\eps,\vp^*}$.\\ 

The \emph{first hitting/collision time} at site $\vp^*$ is defined by
$$t_\delta(\vx)=\inf\{n\ge 0:\,T^n_{\eps,\vp^*}(\vx)\in H_\delta(\vp^*)\}.$$

\begin{thm}\label{thm:main2} Let $\eps>0$ be small enough as in Theorem \ref{thm:main1} and\\ $\delta\in(0,\eps]$ be the size of intervals in $\{A_{\delta,\v}(\q)\}_{(\q,\v)\in\mathcal N_{\vp^*}}$. Let $-\ln\lambda_{\eps,\delta}$ denote the corresponding first collision rate. Then
\begin{enumerate}
\item as $\delta\to 0$
$$\lambda_{\varepsilon,\delta}=1-\mu_{\eps,\vp^*}(H_\delta(\vp^*))\cdot\theta(1+o(1)),$$ 
 with $\theta\in(0,1]$. In particular, if $S^{rec}= \emptyset$ then $\theta=1$; otherwise $\theta\in(0,1)$ and is given by:
 \begin{align*}
\theta=1-\sum_{\v,\v'\in V}\sum_{k\in {\KK(\v,\v')}}q_{k}(\v,\v'),
\end{align*}
where $q_{k}(\v,\v')$ is non zero whenever $(\tau^{k+1}(a_{\v}),\tau^{k+1}(a_{-\v}))=(a_{\v'},a_{-\v'})$ and is defined as follows : 

\begin{align*}
&q_{k}(\v,\v')\nonumber=\frac{1}{\sum_{(q,\v)\in \mathcal N_{\vp^*}}\rho_{\tau}(a_\v)\frac{\rho_{\eps,\q}(a_{-\v}^+)+\rho_{\eps,\q}(a_{-\v}^-)}{2}}\times\nonumber\\
    &\frac{\rho_{\tau}(a_{\v})}{|(\tau^{k+1})'(a_{\v})|}\left(\frac{1}{2|(\tau^{k+1})'(a_{-\v})|}\hat \rho_{\eps,\Lambda,k}(a_{-\v}^+)+\frac{1}{2|(\tau^{k+1})'(a_{-\v})|}\hat \rho_{\eps,\Lambda,k}(a_{-\v}^-)\right),
\end{align*}
with
$$
\hat{\rho}_{\eps,\Lambda,k}=\E(1_{ (\Psi_k^{\q}=\p^*+\v') \cap \bigcap_{i=1}^r \,^c(\Psi_{j_i}^{\q}=\p^*+\w_{j_i})}\rho_{\eps,\Lambda}(.)|I^{\{q\}}).
$$

\item Moreover, $\exists$ $C>0$, $\xi_\delta>0$, with $\lim_{\delta\to 0}\xi_\delta=\theta$, such that for all $t>0$
$$\left|\mu_{\eps,\vp^*}\Big\{t_\delta\ge\frac{t}{\xi_\delta\mu_{\eps,\vp^*}(H_\delta(\vp^*))}\Big\}-e^{-t}\right|\le (t\vee 1)e^{-t} C\delta^2\left|\ln(\delta^2)\right|.$$
\end{enumerate}
\end{thm}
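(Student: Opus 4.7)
The plan is to apply the abstract perturbative framework for rare-event transfer operators developed by Keller and Liverani in \cite{KL99,KL09',K12}, taking as its main input the spectral gap for $\LG_{\eps,\p^*}$ and $\hat\LG_{\eps,\delta,\p^*}$ on $\B$ already provided by Theorem \ref{thm:main1}. Once those spectral hypotheses are verified together with a bound on the perturbation size $\vertiii{\LG_{\eps,\p^*}-\hat\LG_{\eps,\delta,\p^*}}$, the framework produces both an asymptotic expansion $1-\lambda_{\eps,\delta}=\theta\cdot\mu_{\eps,\p^*}(H_\delta(\p^*))(1+o(1))$ and an exponential law for the normalized first hitting time, with an explicit quantitative error. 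Crucially, since the survival set can be expressed via the decoupled dynamics (equation \eqref{eq:survivaldec}), the reference measure $\mu_{\eps,\p^*}$ does not depend on $\delta$, so that all short-return quantities are genuine expectations under a fixed measure.

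\textbf{Computation of the extremal index.} Following \cite{K12}, I would express
\[
\theta=1-\sum_{k\ge 0}\lim_{\delta\to 0}\frac{\mu_{\eps,\p^*}\!\left(H_\delta(\p^*)\cap T_{\eps,\p^*}^{-(k+1)}H_\delta(\p^*)\cap\bigcap_{j=1}^{k}T_{\eps,\p^*}^{-j}\,{}^cH_\delta(\p^*)\right)}{\mu_{\eps,\p^*}(H_\delta(\p^*))},
\]
and identify each term with the corresponding $q_k(\v,\v')$. For the denominator, $H_\delta(\p^*)$ is the union over $\v\in V$ of cylinders $\{x_{\p^*}\in A_{\delta,\v},\,x_{\p^*+\v}\in A_{\delta,-\v}\}$; since the marginal of $\mu_{\eps,\p^*}$ at site $\p^*$ equals $\rho_\tau\,dm$ (the decoupled dynamics acts as $\tau$ on $x_{\p^*}$), and since $\rho_{\eps,\q}$ is only BV, Lebesgue-differentiation at the endpoint $a_{-\v}$ must be done via the average $(\rho_{\eps,\q}(a_{-\v}^+)+\rho_{\eps,\q}(a_{-\v}^-))/2$, which produces the normalising constant in $q_k(\v,\v')$. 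For the numerator, a short return at time $k+1$ without intermediate collisions forces the two coordinates that initially lie in the collision region at $\p^*$ to return there after $k+1$ iterations of $T_{\eps,\p^*}$: one of them is simply iterated as $\tau$, while the other travels through the lattice via the index map $\Psi_k^{\p^*+\v}$. This simultaneously forces $(\tau^{k+1}a_\v,\tau^{k+1}a_{-\v})=(a_{\v'},a_{-\v'})$, prescribes the transport event $\{\Psi_k^{\q}=\p^*+\v'\}$, and excludes the intermediate recurrences at times $j_1,\dots,j_r\in J_k$, which is precisely the event whose conditional density is $\hat\rho_{\eps,\Lambda,k}$. A change of variables then produces the Jacobian factors $|(\tau^{k+1})'(a_{\pm\v})|^{-1}$ together with the one-sided values $\hat\rho_{\eps,\Lambda,k}(a_{-\v}^\pm)$, giving exactly $q_k(\v,\v')$. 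Convergence of the resulting series and the bound $\theta\in(0,1]$ follows from the exponential expansion of $\tau$ ensuring $|(\tau^{k+1})'(a_{\pm\v})|^{-1}\le\alpha^{-2(k+1)}$.

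\textbf{Exponential law and sharp error.} For part (2), I would apply the quantitative hitting time theorem of \cite{K12}: whenever $\hat\LG_{\eps,\delta,\p^*}$ satisfies a uniform Lasota-Yorke inequality and $\vertiii{\LG_{\eps,\p^*}-\hat\LG_{\eps,\delta,\p^*}}\to 0$, the normalised first hitting time converges to an Exp$(1)$ variable with error $(t\vee 1)e^{-t}\eta_\delta$ for an explicit $\eta_\delta$ constructed from the triple norm of the perturbation. Since the perturbation is supported on $H_\delta(\p^*)$, one has $\vertiii{\LG_{\eps,\p^*}-\hat\LG_{\eps,\delta,\p^*}}\lesssim \mu_{\eps,\p^*}(H_\delta(\p^*))\sim C\delta^2$; feeding this into the Keller-Liverani perturbation bound for the dominant eigenprojector produces an additional logarithmic factor, giving $\eta_\delta\le C\delta^2|\ln\delta^2|$. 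The finite-$\delta$ constant $\xi_\delta$ is the truncated version of the formula for $\theta$ produced by the same spectral perturbation, and converges to $\theta$ as $\delta\to 0$.

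\textbf{Main obstacle.} The most delicate step is the rigorous identification of the short-return densities: one must follow through the index map $\Psi_k$ how the two colliding coordinates propagate along the lattice under $T_{\eps,\p^*}$, which still allows collisions among sites away from $\p^*$, while maintaining uniform BV control of the marginal $\rho_{\eps,\Lambda}$ on growing boxes $\Lambda=\p^*+\{-(k+1),\dots,k+1\}^d$. This is where the auxiliary uniform Lasota-Yorke inequality of Section \ref{sec:LYaux} is crucial, and where the assumption $a_\v\in I_i$ is needed to guarantee that $\tau^{k+1}$ is smooth near $a_{\pm\v}$ and that the one-sided BV traces and Jacobians entering $q_k(\v,\v')$ are well defined. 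Handling these estimates uniformly in $\delta$ on the non-compact phase space $I^{\ZZ^d}$ is where most of the technical work will lie.
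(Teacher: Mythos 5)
Your overall strategy coincides with the paper's: verify the hypotheses (A1)--(A6) of the Keller--Liverani rare-event framework for the pair $\LG_{\eps,\p^*}$, $\hat\LG_{\eps,\delta,\p^*}$, compute $\theta$ via the short-return quantities $q_{k,\delta}$, and invoke \cite[Proposition~2]{K12} for the quantitative exponential law. For Part~(1) your outline matches the paper's Lemmas~\ref{lemrefdenom}, \ref{lemnegnonrec} and \ref{lem:thetalemma} point by point --- the decoupled marginal $\rho_\tau\otimes\rho_{\eps,\q}$ at $\p^*$, the role of the index map $\Psi_k^{\q}$ in identifying the only returns that survive the limit, and the one-sided BV traces and Jacobians --- and this part of your argument is essentially correct.

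The gap is in Part~(2). You assert $\vertiii{\LG_{\eps,\p^*}-\hat\LG_{\eps,\delta,\p^*}}\lesssim \mu_{\eps,\p^*}(H_\delta(\p^*))\sim\delta^2$ on the grounds that ``the perturbation is supported on $H_\delta(\p^*)$''. This is the one-dimensional intuition (where BV densities are $L^\infty$, so hole mass and weak-norm perturbation share the same order), and it fails here: $H_\delta(\p^*)$ constrains two coordinates, so the relevant marginal is a two-dimensional partial-BV density, which by Sobolev lies in $L^2$ but not in $L^\infty$. Accordingly the paper's Lemma~\ref{lem:close} gives
\[
\sup_{\|\mu\|\le 1}\bigl|(\LG_{\eps,\p^*}-\hat\LG_{\eps,\delta,\p^*})\mu\bigr|=\mathcal O(\delta),
\]
not $\mathcal O(\delta^2)$, and this rate is essentially sharp. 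In other words $\eta_\delta\sim\delta$ while $\Delta_\delta=\mu_{\eps,\p^*}(H_\delta(\p^*))\sim\delta^2$, and these two quantities genuinely have different orders. This discrepancy is precisely why the hypothesis (A6), namely $\eta_\delta\cdot\|(\LG_{\eps,\p^*}-\hat\LG_{\eps,\delta,\p^*})\mu_{\eps,\p^*}\|\le C|\Delta_\delta|$, is a nontrivial check in this setting: it requires the strong-norm bound $\|(\LG_{\eps,\p^*}-\hat\LG_{\eps,\delta,\p^*})\mu_{\eps,\p^*}\|=\mathcal O(\delta)$ obtained in Lemma~\ref{lem:verirare} by exploiting the decoupling $(\mu_{\eps,\p^*})_{\{\p^*,\q\}}=(\mu_{\eps,\p^*})_{\p^*}\otimes(\mu_{\eps,\p^*})_{\q}$, so that the product of two $\mathcal O(\delta)$ quantities matches $\mathcal O(\delta^2)$. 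Your write-up skips this verification, which is one of the genuinely delicate steps in the infinite-dimensional, non-compact setting; with your (incorrect) triple-norm estimate, neither (A6) nor the claimed $\delta^2|\ln\delta^2|$ rate would follow from the argument you sketch. The paper's proof instead sets $N=\mathcal O(\log\eta_\delta)$, defines $\xi_\delta=\theta_{N,\delta}+\mathcal O(\kappa_N)$ with $\theta_{N,\delta}=1-\sum_{k=0}^{N-1}\lambda_{\eps,\delta}^{-k}q_{k,\delta}$, and then follows \cite[Proposition~2]{K12} verbatim, with the error driven by the interplay of $\eta_\delta$, $\Delta_\delta$ and the uniform spectral gap $\kappa_N$, rather than by the single quantity $\vertiii{\LG_{\eps,\p^*}-\hat\LG_{\eps,\delta,\p^*}}$ that you use.
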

 The key quantity $\theta$ explicitly stated in the above theorem is the \emph{extremal index} associated with the event of having a collision at site $\p^*$. Thus, $S^{rec}\neq \emptyset$ indicates clustering of collisions at site $\p^*$.

Next, we are also interested in studying the distribution of number of collisions at a particular site $\p^*$. For this purpose, we study the following process 
\begin{equation}\label{eq:counting}
Z_\delta(t):=\sum_{k=1}^{\left\lfloor \frac{t}{\mu_{\eps,\p^*}(H_\delta(\p^*))}\right\rfloor}1_{H_\delta(\p^*)}\circ T_{\eps,\delta}^k,
\end{equation}
where, $t>0$, $T_{\eps,\delta}$ is the fully coupled map lattice defined in \eqref{eq:fullymap}, with $\delta\in(0,\eps]$ being the size of collision zones involved with site $\p^*$.

\begin{rem}
Note that as $\delta\to 0$ the fully coupled map lattice $T_{\eps,\delta}$ converges to the decoupled map, $T_{\eps, \p^*}$. Therefore, we consider in \eqref{eq:counting} the scaling $\lfloor \frac{t}{\mu_{\eps,\p^*}(H_\delta(\p^*))}\rfloor$, where $\mu_{\eps,\p^*}$ is the measure associated with $T_{\eps, \p^*}$.
\end{rem} 

\begin{rem}
Generally in classical dynamical systems literature, see for instance \cite{D04, HV09}, when studying the statistics of number of visits to a set the dynamics does not depend on the parameter of the target set. However, since we want to count the collisions of the fully coupled system that take place at site $\p^*$, it is natural in our model to consider the process $Z_\delta(t)$ \eqref{eq:counting} where both the target set and the dynamics depend on the size, $\delta$, of the collision zones associated with $\p^*$. The dependence on $\delta$ in the dynamics $T_{\eps,\delta}$ leads to a `non-standard' characteristic function of the iid sequence associated with the limiting compounded Poisson process in Theorem \ref{thm:main3} below.
\end{rem}

To state our next result, in addition to $S^{rec}$ defined above in \eqref{eq:recset}, we need to introduce the set $\tilde{S}^{rec}$ as follows: 
\begin{equation}\label{eq:recset}
\begin{split}
    \tilde S^{rec}:=&\{(a_\v, a_{-\v}) \in S, \exists k \in \N, \v'\in V, (\tau^k a_{\v},\tau^ka_{-\v})=(a_{\v'},a_{-\v'}) \\
    &\text{ s.t } \exists \vx\in I^{\ZZ^d}, (x_{\p^*},x_{\p^*+\v})=( a_{-\v}, a_\v) \text{ and }\Psi_k^{\p^*+\v}(\vx)=\p^*-\v'\}.\nonumber
    \end{split}
\end{equation}

Note that the main difference between $S^{rec}$ and $\tilde S^{rec}$ is that in $S^{rec}$ we consider $(x_{\p^*},x_{\p^*+\v})=( a_{\v}, a_{-\v})$, while in $\tilde S^{rec}$, we consider $(x_{\p^*},x_{\p^*+\v})=( a_{-\v}, a_\v)$, there is an intervertion coming from the fact we consider recurrence linked to the map $T_{\epsilon,\delta}$ instead of $T_{\epsilon,\p^*}$.

For $(a_{\v},a_{-\v})\in S^{rec}\cup \tilde S^{rec}$, let $\tilde{\mathbb K}(\v,\v')$ be the set of integers $k$ such that $(\tau^{k+1}a_{\v},\tau^{k+1}a_{-\v})=(a_{\v'},a_{-\v'})$ and $J_k$ is the set composed of $j_1\leq \dots \leq j_r\leq k$, the recurrent time into $S^{rec}\cup \tilde S^{rec}$ up to time $k$. Before stating the next theorem, we recall the definition of a compound Poisson random process:

A stochastic process $Z(t) : I^{\mathbb Z^d} \to\mathbb N$ is compound Poisson distributed if there exists a
Poisson process variable $N(t)$ and a sequence of iid random variables
$X_k : I^{\mathbb Z^d}\to \mathbb N$, which is also independent of $N(t)$, such that %
$$Z(t)=\sum_{k=1}^{N(t)}X_i.$$

Recall the process $Z_\delta(t)$ which was defined in \eqref{eq:counting}, which counts the number of collisions at site $\p^*$ from time $k=1$ up to time $\left\lfloor \frac{t}{\mu_{\eps,\p^*}(H_\delta(\p^*))}\right\rfloor$. We prove the following theorem:
\begin{thm}\label{thm:main3}
The process $Z_\delta(t)$ converges in law (for any probability measure $\mu \in \B$; in particular $\mu_{\eps,\p^*}$) to a compound Poisson process $$Z(t)=\sum_{i=1}^{N(t)}X_i,$$ where $(N(t))_t$ is a Poisson process of intensity
$\theta Leb_{|[0,\infty)}$, $\theta\in(0,1]$ is as in Theorem \ref{thm:main2},
and $(X_i)_{i\in \N}$ is an iid sequence whose characteristic function is given by 
$\phi_X(s)=\frac{\tilde \theta(s)(e^{is}-1)}{\theta}+1$, $\tilde\theta:\mathbb R\to\mathbb C$. In particular, $\tilde \theta(s) =1$ when $S^{rec}\cup \tilde S^{rec}=\emptyset$; otherwise,
$$\tilde \theta(s)= 1-\sum_{\v,\v'\in V}\sum_{k\in \tilde{\mathbb{K}}(\v,\v')}\sum_{j=0}^ke^{isj}\left(\beta_k^{(1)}(j,\v,\v')-e^{is}\beta_k^{(2)}(j,\v,\v')\right)$$
where
\begin{equation*}
\begin{split}
&\lim_{\delta \to 0}\frac{\mu_{\eps,\vp^*}\left(H_\delta(\vp^*)\cap T_{\eps,\p^*}^{-1}\left(T_{\eps,\delta}^{-k}H_\delta(\vp^*)\cap \left(\sum_{i=0}^{k-1}1_{H_\delta(\p^*)}\circ T_{\eps,\delta}^i=j\right)\right)\right)}{\mu_{\eps,\vp^*}\left(H_\delta(\vp^*)\right)}\\
&=:\beta_k^{(1)}(j,\v,\v')\\
\end{split}
\end{equation*}
and
\begin{equation*}
\begin{split}
&
\lim_{\delta \to 0}\frac{\mu_{\eps,\vp^*}\left(H_\delta(\vp^*)\cap T_{\eps,\delta}^{-(k+1)}H_\delta(\vp^*)\cap \left(\sum_{i=1}^k1_{H_\delta(\p^*)}\circ T_{\eps,\delta}^i=j\right)\right)}{\mu_{\eps,\vp^*}\left(H_\delta(\vp^*)\right)}\\
&=:\beta_{k}^{(2)}(j,\v,\v').
\end{split}
\end{equation*}
\end{thm}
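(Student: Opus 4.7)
The plan is to establish convergence of the characteristic function of $Z_\delta(t)$ to that of the claimed compound Poisson process $Z(t)$, using a weighted transfer operator together with the Keller--Liverani perturbation framework. Introduce
$$\mathcal{P}_{s,\delta}\nu(\varphi):=\nu\bigl((1+(e^{is}-1)1_{H_\delta(\p^*)})\,\varphi\circ T_{\eps,\delta}\bigr),\qquad s\in\RR.$$
Iteration yields, for any $\mu\in\B$,
$$\int e^{isZ_\delta(t)}\,d\mu=\mathcal{P}_{s,\delta}^{n_t}(T_{\eps,\delta *}\mu)(1),\qquad n_t=\lfloor t/\mu_{\eps,\p^*}(H_\delta(\p^*))\rfloor,$$
and the decomposition $\mathcal{P}_{s,\delta}=\LG_{\eps,\p^*}+R_{s,\delta}$ with
$$R_{s,\delta}\nu(\varphi)=\nu(\varphi\circ T_{\eps,\delta}-\varphi\circ T_{\eps,\p^*})+(e^{is}-1)\nu(1_{H_\delta(\p^*)}\varphi\circ T_{\eps,\delta})$$
exhibits $\mathcal{P}_{s,\delta}$ as a perturbation of $\LG_{\eps,\p^*}$ supported on $H_\delta(\p^*)$ of order $O(\mu_{\eps,\p^*}(H_\delta(\p^*)))$, since the two dynamics coincide outside the collision set.

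Combining the uniform Lasota--Yorke inequality from Section \ref{sec:LYaux} with the spectral gap of $\LG_{\eps,\p^*}$ from Theorem \ref{thm:main1}(1), the Keller--Liverani theorem produces, for each $s\in\RR$ and sufficiently small $\delta>0$, a simple dominant eigenvalue $\lambda(s,\delta)\to 1$ together with a uniform spectral gap. The standard spectral decomposition then yields
$$\mathcal{P}_{s,\delta}^{n_t}(T_{\eps,\delta *}\mu)(1)=\lambda(s,\delta)^{n_t}(c(s,\delta,\mu)+o(1))+O(r^{n_t}),\qquad r<1,$$
with $c(s,\delta,\mu)\to 1$. The theorem thus reduces to establishing the first-order expansion
$$\lambda(s,\delta)=1+\mu_{\eps,\p^*}(H_\delta(\p^*))\,\tilde\theta(s)(e^{is}-1)+o(\mu_{\eps,\p^*}(H_\delta(\p^*))),$$
from which $\lambda(s,\delta)^{n_t}\to e^{t\tilde\theta(s)(e^{is}-1)}=e^{\theta t(\phi_X(s)-1)}$, the characteristic function of the stated compound Poisson law.

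The identification of $\tilde\theta(s)$ will follow from a Dyson-type expansion of $\lambda(s,\delta)-1$ in powers of $R_{s,\delta}$, combined with the cluster analysis of consecutive near-returns to $H_\delta(\p^*)$ already exploited in the proof of Theorem \ref{thm:main2}. Isolated visits produce the leading contribution $(e^{is}-1)\mu_{\eps,\p^*}(H_\delta(\p^*))$, while clusters of length $j+1$ emerging from the recurrent targets in $S^{rec}\cup\tilde S^{rec}$ yield correction terms carrying a phase $e^{is(j+1)}$ and weights built from the densities $\rho_{\eps,\Lambda}$ and the derivatives of $\tau$ at the recurrent points. The dichotomy between $\beta_k^{(1)}$ and $\beta_k^{(2)}$ mirrors whether the first transition initiating the cluster is realised by $T_{\eps,\p^*}$ or by $T_{\eps,\delta}$, corresponding respectively to one or two occurrences of the difference $T_{\eps,\delta}-T_{\eps,\p^*}$ at the cluster boundary in the Dyson expansion. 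Longer or nested clusters, and cross-cluster interactions, are absorbed into the remainder via the $\delta^2|\ln\delta^2|$-type estimates developed for Theorem \ref{thm:main2}(2).

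The principal obstacle, and what genuinely distinguishes this argument from Theorem \ref{thm:main2}, is the mismatch between the fully coupled dynamics $T_{\eps,\delta}$ governing $Z_\delta(t)$ and the decoupled dynamics $T_{\eps,\p^*}$ fixing the scaling $\mu_{\eps,\p^*}(H_\delta(\p^*))$. The two maps agree outside $H_\delta(\p^*)$, but a genuine collision at $\p^*$ swaps the state with a neighbour and thereby creates recurrences invisible to $T_{\eps,\p^*}$; this is exactly what forces the introduction of the enlarged recurrent set $\tilde S^{rec}$ alongside $S^{rec}$. Extracting the corresponding single-collision cross-terms from the Dyson series and verifying the Keller--Liverani hypotheses for the $s$-dependent family $\{\mathcal{P}_{s,\delta}\}$ uniformly in $\delta$ on the non-compact phase space $I^{\ZZ^d}$---where the bounded variation norm $\|\cdot\|$ of Section \ref{sec:set} is the only substitute available for standard compactness arguments---constitutes the technical core of the proof.
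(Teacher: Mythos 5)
Your proposal follows essentially the same route as the paper: the operator $\mathcal{P}_{s,\delta}$ you introduce coincides with the paper's twisted transfer operator $\tilde\LG_{\delta,s}=\LG_{\eps,\delta}(e^{is1_{H_\delta(\p^*)}}\cdot)$, since $1+(e^{is}-1)1_{H_\delta(\p^*)}=e^{is1_{H_\delta(\p^*)}}$, and the paper likewise proceeds via the uniform Lasota--Yorke inequality of Section \ref{sec:LYaux}, the Keller--Liverani stability theorem, the first-order expansion $\lambda_{s,\delta}=1-(1-e^{is})\mu_{\eps,\p^*}(H_\delta(\p^*))\tilde\theta(s)(1+o(1))$ obtained from the $\tilde q_{k,\delta}(s)$ cluster expansion in the framework of \cite{KL09'}, and finally the L\'evy continuity theorem as in \cite{AFGV24}. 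Your discussion of the $\beta^{(1)}$ versus $\beta^{(2)}$ split and the role of $\tilde S^{rec}$ as capturing the recurrences created by actual collisions at $\p^*$ under $T_{\eps,\delta}$ also matches the paper's analysis, so this is the same argument at a slightly higher level of abstraction.
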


\begin{rem}\label{rem:simdiff}
 Notice that in general, $S^{rec}\not\subset \tilde S^{rec}$, indeed one may consider the case $d=1$ with periodic points $(a_{\v},a_{-\v})$ in the sense that $(\tau a_{\v},\tau a_{-\v})=(a_{\v},a_{-\v})$, in that case, $(a_{\v},a_{-\v}) \in S^{rec}$ but $(a_{\v},a_{-\v}) \notin \tilde S^{rec}$. We also have $\tilde  S^{rec}\not\subset S^{rec}$. One can take $I^{\ZZ^2}$ and $\v \neq \v'$ such that $(a_{\v},a_{-\v})\in \tilde S^{rec}$ with $(\tau^3 a_{\v},\tau^3 a_{-\v})=(a_{-\v'},a_{\v'})$ such that $\tau a_{\v}$ and $\tau^2 a_{\v'}$ belong respectively to $A_{\eps,-\v'} \backslash a_{-\v'}$ and $A_{\eps,-\v} \backslash a_{-\v}$ and suppose that the orbit of $(\tau^i a_{\v})_{i\in \N}$ crosses no collision interval except for $\tau^3a_{\v}$. Then by construction, $( a_{\v}, a_{-\v})\in \tilde S^{rec}\backslash S^{rec}$.
\end{rem} 
\begin{rem}
By Remark \ref{rem:simdiff}, in general, $\beta_{k}^{(2)}(j,\v,\v')\not=\beta_{k}^{(1)}(j,\v,\v')$. Indeed, as a by-product of the proof of Theorem \ref{thm:main3} in section \ref{sec:proofthm3}, we obtain that $\beta_{k}^{(2)}(j,\v,\v')=0$ whenever $\tilde S^{rec}=\emptyset$ and $\beta_{k}^{(1)}(j,\v,\v')=0$ whenever $S^{rec}=\emptyset$. Finally, notice that $\beta_k^{(1)}(0,\v,\v')=q_k(\v,\v')$ with $q_k(\v,\v')$ as given in Theorem \ref{thm:main2}.
\end{rem}

\begin{rem}
Starting with the work of \cite{PS20}, see also \cite{PS21, Z22}, spatio-temporal Poisson processes were obtained by recording not only the successive times of visits to a set, but also the positions. It would be interesting to investigate if similar results can be obtained in infinite dimensional collision models as the one studied in this current work.   
\end{rem}
\subsection{An example}\label{sub:example} In this subsection we present an example that satisfies the above results and show how to compute $\theta$ of Theorem \ref{thm:main2} and the law for the random variables $(X_i)_{i\in \N}$ of Theorem \ref{thm:main3} in a concrete situation.
Let $d=1$ and $\tau :[0,1] \to [0,1]$ be given by $x\mapsto 5 x \,\mod 1$. Since $d=1$, we only have two collision sets $A_{\delta,\v}$, $\v\in V:=\{-1,1\}$. Assume that the collision sets are centered around $a_{\v}$, $\v\in V$, with  $a_1=\frac{1}{2}$  and $a_{-1}=\frac{1}{4}$. Notice that $a_1,a_{-1}$ are fixed points for the map $\tau$ ($\tau a_{\v}=a_\v$ for $\v\in V$). One can check that $S^{rec}=\{(a_1,a_{-1}),(a_{-1},a_1)\}$ and $\tilde S^{rec}=\emptyset$\footnote{If $(a_\v,a_{-\v})\in \tilde S^{rec}$ then there would be $k\in \NN$ such that $(\tau a_\v,\tau a_{-\v})=(a_{-\v},a_\v)$ which is impossible because $a_\v$ is a fixed point of $\tau$.}. We can now apply Theorem \ref{thm:main1} and Theorem \ref{thm:main2}: the first collision rate $-\ln(\lambda_{\eps,\p^*})$ and 
$$\lambda_{\varepsilon,\delta}=1-\mu_{\eps,\vp^*}(H_\delta(\vp^*))\cdot\theta(1+o(1)),$$ 
where
 \begin{align*}
\theta &=1-\sum_{\v,\v'\in V}\sum_{k\in {\KK(\v,\v')}}q_{k}(\v,\v')\\
&=1- \sum_{k\geq 1}q_k(1,-1)-\sum_{k\geq 1}q_k(-1,1)
\end{align*}
with
\begin{align*}
&q_{k}(\v,\v')=\nonumber\\
&\frac{1}{\sum_{(q,\v)\in \mathcal N_{\vp^*}}\rho_{\tau}(a_\v)\frac{\rho_{\eps,\q}(a_{-\v}^+)+\rho_{\eps,\q}(a_{-\v}^-)}{2}}\times\nonumber\\
    &\frac{\rho_{\tau}(a_{\v})}{|(\tau^{k+1})'(a_{\v})|}\left(\frac{1}{2|(\tau^{k+1})'(a_{-\v})|}\hat \rho_{\eps,\Lambda,k}(a_{-\v}^+)+\frac{1}{2|(\tau^{k+1})'(a_{-\v})|}\hat \rho_{\eps,\Lambda,k}(a_{-\v}^-)\right)\\
&=\frac{1}{\rho_{\tau}(a_1)\left(\frac{\rho_{\eps,\p^*+1}(a_{-1}^+)+\rho_{\eps,\p^*+1}(a_{-1}^-)}{2}\right)+\rho_{\tau}(a_{-1})\left(\frac{\rho_{\eps,\p^*+1}(a_{1}^+)+\rho_{\eps,\p^*+1}(a_{1}^-)}{2}\right)}\times\nonumber\\
    &\frac{\rho_{\tau}(a_{1})}{|(\tau^{k+1})'(a_{1})|}\left(\frac{1}{2|(\tau^{k+1})'(a_{-1})|}\hat \rho_{\eps,\Lambda,k}(a_{-1}^+)+\frac{1}{2|(\tau^{k+1})'(a_{-1})|}\hat \rho_{\eps,\Lambda,k}(a_{-1}^-)\right).
\end{align*}
We now compute $\hat{\rho}_{\eps,\Lambda,k}(a_{-\v}^\pm)$ for $k\geq 1$. We consider the case when the direction is $\v=+1$. The case when $\v=-1$ follows the same reasoning. To estimate  $\hat{\rho}_{\eps,\Lambda,k}(a_{-\v}^\pm)$ notice that for any $\vx$ such that $\pi_\q(\vx)$ belongs to a little neighborhood of $a_\v$, $\pi_\q(T_{\epsilon,\p^*}^j\vx)$ remains close to the periodic point $a_\v$ for any $j\leq k$. Thus, for any $j\leq k$ $\Psi_j^\q(\p^*+ 1)=\p^*+1$ and thus $1_{(\Psi_k^{\q}=\p^*+1) \cap \bigcap_{i=1}^r \,^c(\Psi_{j_i}^{\q}=\p^*+\w_{j_i})}(\vx)=0$ for $k\geq 2$ and $1_{(\Psi_k^{\q}=\p^*+1)}(\vx)=1$. Then, 
\begin{align*}
\hat{\rho}_{\eps,\Lambda,k}&=\E(1_{ (\Psi_k^{\q}=\p^*+\v') \cap \bigcap_{i=1}^r \,^c(\Psi_{j_i}^{\q}=\p^*+\w_{j_i})}\rho_{\eps,\Lambda}(.)|I^{\{q\}}).
\end{align*}
Consequently, for $k\ge 2$ $$\hat{\rho}_{\eps,\Lambda,k}=0$$
while 
$$\hat{\rho}_{\eps,\Lambda,1}=\rho_{\eps,\q}.$$
Thus, 
\begin{align*}
    q_{1}(1,-1)+q_{1}(-1,1)&=\frac{1}{|(\tau^{2})'(a_{1})||(\tau^2)'(a_{-1})|}\\
    &=5^{-(2+2)}=5^{-4}.
\end{align*}
We thus deduce the extremal index $\theta$
$$\theta=1-5^{-4}.$$
And the rare event statistics from Theorem \ref{thm:main2} follows, $\exists$ $C>0$, $\xi_\delta>0$, with $\lim_{\delta\to 0}\xi_\delta=1$, such that for all $t>0$
$$\left|\mu_{\eps,\vp^*}\Big\{t_\delta\ge\frac{t}{\xi_\delta\mu_{\eps,\vp^*}(H_\delta(\vp^*))}\Big\}-e^{-\theta t}\right|\le (t\vee 1)e^{-\theta t} C\delta^2\left|\ln(\delta^2)\right|.$$
For the compound Poisson process $Z(t):=\sum_{i=1}^{N(t)}X_i$ of Theorem \ref{thm:main3}
we can also conclude that (see the a detailed justification in subsection \ref{subsect:just})
\begin{equation}\label{eq:examplepoisson}
\tilde \theta(s)=1-\sum_{\v,\v'\in V}\sum_{k\in {\KK(\v,\v')}}\beta_{k,\delta}^{(1)}(j,\v,\v')=\theta
\end{equation}
and that the law of the integer variable $X_i$ is given by the following characteristic function $\phi_{X}(s)=e^{is}$. In other words, $X_i$ is almost surely constant and equal to $1$ and $Z(t)\sim \mathcal P(\theta t)$, that is a Poisson law of intensity $\theta t$.

\subsection{Strategy of the proofs} The proofs are based on spectral techniques of \cite{KL99,KL09',K12}, albeit in an infinite dimensional setting. We proceed as follows: 

\noindent$\bullet$ Show $\LG_{\eps, \vp^*}$ and $\hat\LG_{\eps,\delta, \vp^*}$ satisfy a uniform Lasota-Yorke inequality, and recalling \eqref{eq:collisionset}, show that the difference of the two, acting on measures from the strong space and evaluated in the weak norm, is $\mathcal O(\delta)$. Then show for $\eps$ small enough $\LG_{\eps,\vp^*}$ has a spectral gap on $\mathcal B$, and by \cite{KL99}, conclude for $\delta$ small enough that $\hat\LG_{\eps,\delta, \vp^*}$ also has a spectral gap, with leading eigenvalue $\lambda_{\eps,\delta}\in(0,1)$. 

\noindent$\bullet$ Using the notation $\lambda_{\eps,\delta}$ as above, we now differentiate $\lambda_{\eps,\delta}$ following the abstract results of \cite{KL09'} and obtain a formula for $\theta$, which is the main technical part of this step. To obtain the law for the first collision time, with sharp error term, we follow \cite{K12}.

\noindent$\bullet$ Finally, as suggested by \cite{AFGV24}, to obtain a limit law for counting the number of visits to a target set, one can still use the spectral framework of \cite{KL09'}.  For this purpose, we study the `twisted' transfer operator $\tilde{\mathcal{L}}_{\delta,s}(\cdot):=\mathcal{L_{\eps,\delta}}(e^{is1_{H_\delta(\p^*)}}\cdot)$, where $\mathcal{L_{\eps,\delta}}$ is the transfer operator of the fully coupled map lattice $T_{\eps,\delta}$. This will provide the expression of the characteristic function of \eqref{eq:counting} and its limit as $\delta\to0$.
\section{Proof of Theorem \ref{thm:main1}}\label{sec:pf1}
Recall the definition of the operator $\hat \LG_{\eps, \delta, \vp^*}$, which was introduced in \eqref{eq:ophole}
\begin{equation*}
    \int \varphi d\hat \LG_{\eps, \delta, \vp^*}(\mu)=\int \varphi \circ T_{\eps,\vp^*} 1_{X_{0,\delta}(\p^*)} d\mu.
\end{equation*}
Note that by its definition, $\hat \LG_{\eps, \delta, \vp^*}$ satisfies a uniform Lasota-Yorke inequality  (see Proposition \ref{propLasota-perturbed0}). But since the unit ball of $\|\cdot\|$ is \emph{not} compact in the $|\cdot|$ the Lasota-Yorke inequality does not immediately imply that $\hat \LG_{\eps, \delta, \vp^*}$ is quasi-compact. However, using perturbation arguments, below we show that $\hat \LG_{\eps, \delta, \vp^*}$ has in fact a spectral gap when acting on $\mathcal B$.
\subsection{Spectral gap for the rare event transfer operator}
\begin{lem}\label{lem:close}
$$\sup_{\|\mu\|\le 1}|(\LG_{\eps, \vp^*}-\hat \LG_{\eps,\delta, \vp^*})\mu |=\mathcal O({\delta}).$$ 
\end{lem}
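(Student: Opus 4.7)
The plan is to subtract the two operators tested against $\varphi\in\mathcal D_1$ and exploit the explicit form of the rare event. For $\mu\in\mathcal B$ with $\|\mu\|\le 1$, the identity $1=1_{X_{0,\delta}(\vp^*)}+1_{H_\delta(\vp^*)}$ gives
$$\int\varphi\,d(\LG_{\eps,\vp^*}-\hat\LG_{\eps,\delta,\vp^*})\mu=\int(\varphi\circ T_{\eps,\vp^*})\cdot 1_{H_\delta(\vp^*)}\,d\mu,$$
and since $|\varphi\circ T_{\eps,\vp^*}|\le 1$, the task reduces to bounding the total-variation mass that $\mu$ places on $H_\delta(\vp^*)$ by a constant times $\delta\|\mu\|$, uniformly in $\varphi\in\mathcal D_1$.

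The second step is to decompose $H_\delta(\vp^*)$ using that the intervals $\{A_{\delta,\v}\}_{\v\in V}$ are pairwise disjoint by hypothesis. This yields
$$H_\delta(\vp^*)=\bigsqcup_{\v\in V}H_\delta^{\v}(\vp^*),\qquad H_\delta^{\v}(\vp^*):=\{x_{\vp^*}\in A_{\delta,\v}\}\cap\{x_{\vp^*+\v}\in A_{\delta,-\v}\},$$
a disjoint union of $|V|=2d$ cylinders, each depending on only two coordinates of $I^{\ZZ^d}$. Since $V$ is finite, it remains to bound the mass of a single cylinder $H_\delta^{\v}(\vp^*)$ by $C\delta\|\mu\|$.

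The main technical step is the resulting two-dimensional marginal estimate
$$\int_{A_{\delta,\v}\times A_{\delta,-\v}}\bigl|\rho_{\{\vp^*,\vp^*+\v\}}(u,v)\bigr|\,du\,dv\le C\delta\|\mu\|,$$
which I would prove by a one-dimensional BV argument. Setting $F(v):=\int_{A_{\delta,\v}}|\rho_{\{\vp^*,\vp^*+\v\}}(u,v)|\,du$, the norm $\|\mu\|$ controls the partial BV of $\rho_{\{\vp^*,\vp^*+\v\}}$ in each coordinate direction, so $F$ is BV on $[0,1]$ with $|F|_{BV}\le C\|\mu\|$; moreover $\int_0^1 F\,dv\le C\delta\|\mu\|$ by applying the embedding $BV([0,1])\hookrightarrow L^\infty([0,1])$ to the one-dimensional marginal $\rho_{\{\vp^*\}}$ integrated over $A_{\delta,\v}$. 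The same embedding applied to $F$ yields $|F|_\infty\le C\|\mu\|$, and integrating over $A_{\delta,-\v}$ (of length $\delta$) closes the estimate. The only real obstacle is that $1_{H_\delta(\vp^*)}$ is not in $\mathcal D$ and hence cannot be fed directly into the weak-norm definition; the reduction to a cylinder depending on only two coordinates, followed by a BV argument in each of those directions, is precisely what circumvents this and keeps the bound global (i.e.\ uniform in the infinite remaining coordinates).
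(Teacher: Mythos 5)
Your proof is correct, and it takes a genuinely different route from the paper's. After the same opening reduction (the paper also collapses the difference to the sum of cylinder integrals $\int 1_{A_{\delta,-\v}}\circ\pi_{\q}\,1_{A_{\delta,\v}}\circ\pi_{\p^*}\,h\,dm_\Lambda$ over a two-coordinate marginal density $h_{\v}$), the paper finishes in one shot by Cauchy--Schwarz over the $\delta\times\delta$ square together with the two-dimensional Sobolev embedding $BV(I^2)\hookrightarrow L^2(I^2)$: $\int_{A_{\delta,\v}\times A_{\delta,-\v}}|h_\v|\le\delta\,\|h_\v\|_{L^2}\le C\delta\|\mu\|$. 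You instead iterate one-dimensional BV arguments: first control $\int_0^1 F$ by $\delta\|\mu\|$ via the $L^\infty$ bound on the one-dimensional marginal in the $u$-direction, then control $\operatorname{Var}(F)$ by $\|\mu\|$ from the partial variation in the $v$-direction, deduce $|F|_\infty\le C\|\mu\|$, and integrate over the length-$\delta$ interval $A_{\delta,-\v}$. Both yield $\mathcal O(\delta)$. Your route is more elementary (only 1D BV facts), while the paper's is shorter once the 2D embedding is taken for granted; both give the same order in $\delta$, so neither improves the other quantitatively. A small imprecision to tidy up: for signed/complex $\mu$ the marginal used in the step $\int_0^1 F\,dv\le C\delta\|\mu\|$ should be $\tilde\rho_{\{\vp^*\}}(u)=\int_0^1|\rho_{\{\vp^*,\vp^*+\v\}}(u,v)|\,dv$ rather than $\rho_{\{\vp^*\}}$ itself; this still has variation bounded by $\|\mu\|$ since taking absolute value can only decrease variation, so the estimate survives unchanged.
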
  
\begin{proof}
For $\mu\in \mathcal B$ and $\varphi\in\mathcal D_1$, which depends on a finite number of coordinates that are included in $\Lambda_0\subset I^{\ZZ^d}$, we have   
    \begin{align}
&\int  \varphi d\left[(\LG_{\eps,\delta}-\hat\LG_{\eps,\delta,\p^*})\mu\right]\nonumber\\
&=\sum_{(\q,\v)\in \mathcal N_{\p^*}}\int \varphi\circ T_{\eps,\p^*} 1_{A_{\delta,-\v}}\circ \pi_{\q} 1_{A_{\delta,\v}}\circ \pi_{\p^*} d\mu \nonumber\\
&\le\|\varphi\|_\infty \sum_{(\q,\v)\in \mathcal N_{\p^*}}\int_{I^\Lambda} 1_{A_{\delta,-\v}}\circ \pi_{\q} 1_{A_{\delta,\v}}\circ \pi_{\p^*} hdm_{\Lambda}\nonumber\\
&\le \|\varphi\|_\infty \sum_{(\q,\v)\in \mathcal N_{\p^*}}\int_{I^2} 1_{A_{\delta,-\v}}(x_{\q}) 1_{A_{\delta,\v}} (y_{\p^*}) h_{\v}(x_{\q},{y_{\p^*}})dx_\q dy_{\p^*},\label{eqlemclose}
\end{align}
where $\Lambda=\Lambda_0\cup \mathcal N_{\p^*}$, $h$ is the density of the marginal of $\mu$ on $\Lambda$ and $h_{\v}(x_{\q},{y_{\p^*}})= \int hdm_{\Lambda\setminus \mathcal N_{\p^*}\cup\{\p^*\}}$. Note that $|h_{\v}|_{BV}\le |h|_{BV}\le\|\mu\|<\infty$. Thus, by applying first the Cauchy-Schwarz inequality then the Sobolev inequality, we obtain 
$$\int  \varphi d\left[(\LG_{\eps,\p^*}-\hat\LG_{\eps,\delta,\p^*})\mu\right]\le 2d\|\varphi\|_{\infty}\|\mu\|\delta.$$
\end{proof}

\begin{proof}[Proof of Theorem \ref{thm:main1}]
Let $\LG_{\eps,\delta}$ denotes the transfer operator of the fully coupled systems; i.e.,
for $\varphi\in\mathcal D$ and $\mu$ a Borel complex measure
$$\int\varphi d\LG_{\eps,\delta}\mu=\int\varphi\circ T_{\eps,\delta}d\mu.$$
By the results of \cite{KL09, KL06}\footnote{In particular see equation (3.20) in \cite{KL06}. Note also that the $\theta$ in (3.20) of \cite{KL06} can be taken to be 1 for the coupling by collision model, similar to what was done in \cite{KL09}.} for $\eps>0$ sufficiently small, there exists\footnote{This is the first instance where the mixing of the site dynamics $\tau$ is needed.} a $C>0$ and $\gamma\in (0,1)$ such that for all $\mu\in\mathcal B$, with $\mu(1)=0$, $n\ge 0$,  
\begin{equation*}
\|\LG_{\eps,\delta}^n\mu\|\le C\gamma^n\|\mu\|.
\end{equation*}
Using the above inequality, the Lasota-Yorke inequality for $\LG_{\eps,\p^*}$ from Proposition \ref{propLasota-perturbed0}, and the fact that for any $\mu\in\mathcal B$
$$|(\LG_{\eps, \vp^*}^m- \LG_{\eps,\delta}^m)\mu |\leq C_m \delta\|\mu\|$$
we can also conclude, that for $\delta$ small enough, there exists a $C>0$ and $\gamma\in (0,1)$ such that for all $\mu\in\mathcal B$, with $\mu(1)=0$, $n\ge 0$,
\begin{equation}\label{eq:contraction}
\|\LG_{\eps, \vp^*}^n\mu\|\le C\gamma^n\|\mu\|.
\end{equation}
Then by a weak compactness argument (see \cite{KL05,KL06}) $T_{\eps,\p^*}$ admits an invariant probability measure $\mu_{\eps,\p^*}\in\mathcal B$. For any $\mu\in \mathcal B$, let  
$$\Pi_{1}\mu= \mu(1)\cdot \mu_{\eps,\p^*} \text{ and }Q=\LG_{\eps, \vp^*}-\Pi_1.$$
Notice that $\Pi_1^2=\Pi_1$ and $\Pi_1Q=Q\Pi_1=0$. Moreover, for any $\mu\in \mathcal B$
$$Q\mu=\LG_{\eps, \vp^*}\mu- \mu(1)\cdot \mu_{\eps,\p^*}=\LG_{\eps, \vp^*}(\mu-\mu(1)\cdot \mu_{\eps,\p^*}).$$
Therefore, by \eqref{eq:contraction}, 
$$\|Q^n\mu\|\le C\gamma^n\|\mu\|$$
and consequently $\text{spec}(\LG_{\eps, \vp^*})\cap\{|z|>\gamma\}=\{1\}$; i.e., $\LG_{\eps, \vp^*}$ admits a spectral gap on $\mathcal B$. Note that the spectral data of $\LG_{\eps, \vp^*}$ do not depend on $\delta$. Consequently, by Proposition \ref{propLasota-perturbed0} and Lemma \ref{lem:close}, for $\delta$ sufficiently small, the Keller-Liverani Stability result\footnote{See also \cite{Pe} section 7.4 for a reformulation of the stability result of \cite{KL99} which we use in this work.} \cite{KL99} implies that the operator $\hat\LG_{\eps,\delta, \vp^*}$ admits a spectral gap on $\mathcal B$. We denote the corresponding simple dominant eigenvalue by $\lambda_{\eps, \delta}$. We denote the corresponding eigenmeasure by $\hat\mu_{\eps,\delta, p^*}$. It is obvious that $\lambda_{\eps, \delta}\in (0,1)$, since for any positive measure $\mu$,
$$|\hat\LG_{\eps,\delta, \vp^*}^n\mu| =|\LG_{\eps,\vp^*}^n1_{X_{n,\delta}(\vp^*)}\mu|\le |1_{X_{n,\delta}(\vp^*)}\mu|<|\mu|.$$
Moreover, we can write 
\begin{equation}\label{eq:specdec}
\lambda_{\eps, \delta}^{-n}\hat\LG_{\eps,\delta, \vp^*}^n=\Pi_{\lambda_{\eps, \delta}} + \hat Q_{\eps,\delta, \vp^*}^n,
\end{equation}
with $\| \hat Q_{\eps,\delta, \vp^*}^n\|<1$. Since $\Pi_{\lambda_{\eps, \delta}}$ is a rank one operator 
$$
\Pi_{\lambda_{\eps, \delta}}=\Psi(.)\hat\mu_{\eps,\delta, p^*},
$$
where $\Psi(.)$ is a linear form on $\mathcal B$. Let $\nu\in \mathcal B$ be a positive measure and $\varphi\in\mathcal D$ be a positive function. By \eqref{eq:specdec}, we have
\begin{align*}
\left|\int \varphi d\lambda_{\eps, \delta}^{-n}\hat\LG_{\eps,\delta, \vp^*}^n\nu-\int \varphi d\Pi_{\lambda_{\eps, \delta}}\nu\right|&\leq |\varphi|_{\infty}\left\|\lambda_{\eps, \delta}^{-n}\hat\LG_{\eps,\delta, \vp^*}^n\nu-\Pi_{\lambda_{\eps, \delta}}\nu\right\|\\
&\underset{n\rightarrow\infty}{\longrightarrow} 0.
\end{align*}
Thus, $\int \varphi d\Pi_{\lambda_{\eps, \delta}}\nu$ is positive. In particular, $\int d\Pi_{\lambda_{\eps, \delta}} m_{\ZZ^d}$ is positive. Therefore, by definition of $\hat\LG_{\eps,\delta, \vp^*}$ and \eqref{eq:specdec} 
\begin{align*}
m_{\ZZ^d}(X_{n,\delta}(\vp^*))&=\int d\hat\LG_{\eps,\delta, \vp^*}^nm_{\ZZ^d}\\
&=\lambda^n_{\eps,\delta}\left[\int d\Pi_{\lambda_{\eps, \delta}} m_{\ZZ^d}+\int d Q_{\eps,\delta, \vp^*}^n m_{\ZZ^d}
\right],
\end{align*}
Consequently, using the above result and the fact that \\
$|Q_{\eps,\delta, \vp^*}|\le \|Q_{\eps,\delta, \vp^*}\|<1$, $\exists C>0$ such that
\begin{align*}
\left|\frac{1}{n}\ln \left(m_{\ZZ^d}(X_{n,\delta}(\vp^*))\right)-\ln \lambda_{\eps,\delta}\right|&=\frac 1 n\ln\left|\int d\Pi_{\lambda_{\eps, \delta}} m_{\ZZ^d}+\int d Q_{\eps,\delta, \vp^*}^n m_{\ZZ^d}\right|\\
&\le\frac{C}{n}.
\end{align*}
Thus, $\lim_{n \to\infty}\frac{1}{n}\ln m_{\ZZ^d}(X_{n,\delta}(\vp^*))=\ln\lambda_{\eps,\delta}$.
\end{proof}

\section{Proof of Theorem \ref{thm:main2}}\label{sec:pf2}
We first introduce the following notation
\begin{equation} \label{eq:eta}
\eta_{\delta}:=\sup_{\|\mu\|\le 1}\left|\left(\LG_{\eps,\p^*}(\mu)-\hat\LG_{\eps,\delta,\p^*}(\mu)\right)(1)\right|,
\end{equation}
Let $\mu_{\eps,\p^*}\in \mathcal B$ be the unique eigenmeasure corresponding to the eigenvalue 1 for the transfer operator $\LG_{\eps,\p^*}$, 
given a finite set $\Lambda\subset \ZZ^d$, we will denote by $\rho_{\eps,\Lambda}$ the density 
   of $\pi_{\Lambda}*\mu_{\eps,\vp^*}$.
We now set
\begin{equation}\label{eq:Delta}
\Delta_\delta:=\left(\LG_{\eps,\p^*}(\mu_{\eps,\p^*})-\hat\LG_{\eps,\delta,\p^*}(\mu_{\eps,\p^*})\right)(1).
\end{equation}
The proof of Theorem \ref{thm:main2} relies on a subtle application of \cite{KL09'}. To check the different assumptions of \cite{KL09'}, we first prove the following Lemma which provides a convenient approximation of the quantity $\Delta_\delta$ in terms of the Lebesgue measure. This will play later a key ingredient in the proof of Lemma \ref{lem:verirare} and Lemma \ref{lem:thetalemma}.

\begin{lem}\label{lemrefdenom}
The following holds:
\begin{enumerate}
\item The mass of $H_\delta(\vp^*)$ under $\mu_{\eps,\p^*}$ scales as follows:
    \begin{align*}
        \mu_{\eps,\p^*}(H_\delta(\vp^*))&\underset{\delta \to 0}\sim \sum_{(\q,\v)\in \mathcal N_{\vp^*}}{\rho_{\tau}(a_\v)m(A_{\delta,\v}})m(A_{\delta,-\v})\frac{\rho_{\eps,\q}(a_{\v}^+)+\rho_{\eps,\q}(a_{\v}^-)}{2},
    \end{align*}
    where $\rho_{\tau}=d\mu_\tau/dm$ and $\mu_\tau$ is the $\tau$-absolutely continuous invariant measure and $\rho_{\eps,\q}=d(\pi_{\q}*\mu_{\eps,p^*})/dm$, $\rho_{\eps,q}(a_{-\v}^+):=\lim_{x\to a_{-\v}^+}\rho_{\eps,\q}(x)$ and $\rho_{\eps,\q}(a_{-\v}^-)=\lim_{x\to a_{-\v}^{-}}\rho_{\eps,\q}(x)$. In particular,
$$\lim_{\delta\to 0}\frac{\mu_{\eps,\p^*}(H_\delta(\vp^*))}{m_{\ZZ^d}(H_\delta(\vp^*))}=\frac{1}{|\mathcal N_{\vp^*}|} \sum_{(q,\v)\in \mathcal N_{\vp^*}}\rho_{\tau}(a_\v)\frac{\rho_{\eps,\q}(a_{-\v}^+)+\rho_{\eps,\q}(a_{-\v}^-)}{2}.$$
\item In addition for $\eps>0$ small enough, 
$$\frac{1}{|\mathcal N_{\vp^*}|} \sum_{(q,\v)\in \mathcal N_{\vp^*}}\rho_{\tau}(a_\v)\frac{\rho_{\eps,\q}(a_{-\v}^+)+\rho_{\eps,\q}(a_{-\v}^-)}{2}>0.$$
\end{enumerate}
\end{lem}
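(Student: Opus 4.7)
The plan is to exploit a decoupling property of $T_{\eps,\p^*}$ that forces the invariant measure $\mu_{\eps,\p^*}$ to be a product measure at site $\p^*$; once this product structure is in place, the computation of $\mu_{\eps,\p^*}(H_\delta(\p^*))$ reduces to one-dimensional integrals that I can expand asymptotically as $\delta\to 0$.

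First, I would establish the key structural fact that $\mu_{\eps,\p^*}$ factorizes at $\p^*$, in the sense that its marginal on $\{\p^*\}$ is $\mu_\tau$ and is independent of the remaining coordinates. Reading off \eqref{decoupling}, the coordinate $x_{\p^*}$ is never updated by a swap (``otherwise'' case), and, symmetrically, for any neighbour $\q=\p^*+\v$ the only potential swap involving $\p^*$ would require $x_\q\in A_{\eps,-\v}$, which is precisely the branch in which $\Phi_{\eps,\p^*}$ acts as the identity at $\q$. Consequently $T_{\eps,\p^*}$ factorizes as a Cartesian product $\tau\times\tilde T$ on $I^{\{\p^*\}}\times I^{\ZZ^d\setminus\{\p^*\}}$, and by the uniqueness of the invariant measure in $\mathcal B$ (coming from the simple dominant eigenvalue $1$ in Theorem~\ref{thm:main1}), $\mu_{\eps,\p^*}$ inherits this product form. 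In particular $\rho_{\eps,\p^*}=\rho_\tau$, and for every $\q\neq\p^*$ the joint $\{\p^*,\q\}$-marginal has density $(x,y)\mapsto\rho_\tau(x)\rho_{\eps,\q}(y)$.

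Next, I would decompose the hitting set using De Morgan on \eqref{eq:rare-event}:
\[ H_\delta(\p^*)=\bigsqcup_{\v\in V} B_\v^\delta, \qquad B_\v^\delta:=\{\vx:\,x_{\p^*}\in A_{\delta,\v},\ x_{\p^*+\v}\in A_{\delta,-\v}\}, \]
the disjointness following from the pairwise disjointness of $\{A_{\delta,\v}\}_{\v\in V}\subset I$ in the $x_{\p^*}$-coordinate. By the factorization above,
\[ \mu_{\eps,\p^*}(B_\v^\delta)=\Bigl(\int_{A_{\delta,\v}}\rho_\tau\,dm\Bigr)\Bigl(\int_{A_{\delta,-\v}}\rho_{\eps,\p^*+\v}\,dm\Bigr). \]
The $C^1$ regularity of $\rho_\tau$ on the partition interval $I_i\ni a_\v$ gives the first integral as $m(A_{\delta,\v})\rho_\tau(a_\v)(1+O(\delta))$; the $BV$ regularity of $\rho_{\eps,\p^*+\v}$ guarantees left and right limits at $a_{-\v}$ and yields the second integral as $m(A_{\delta,-\v})\cdot\tfrac12\bigl(\rho_{\eps,\p^*+\v}(a_{-\v}^+)+\rho_{\eps,\p^*+\v}(a_{-\v}^-)\bigr)(1+o(1))$. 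Summing over $\v\in V$ and reorganising the resulting $|V|$-sum into the equivalent $\mathcal N_{\p^*}$-sum (each physical collision at $\p^*$ is encoded by exactly two elements of $\mathcal N_{\p^*}$, one from each of its two components) produces the stated asymptotic.

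For the ratio and for Part (2), I would apply the same decomposition to the product Lebesgue measure $m_{\ZZ^d}$, obtaining $m_{\ZZ^d}(H_\delta(\p^*))=\sum_{\v\in V}m(A_{\delta,\v})m(A_{\delta,-\v})$, so the ratio formula is immediate. For positivity at $\eps>0$ small, it suffices to retain a single term: any pair $(\p^*,\v)\in\mathcal N_{\p^*}$ contributes $\rho_\tau(a_\v)\rho_\tau(a_{-\v})$ (since $\rho_{\eps,\p^*}=\rho_\tau$ and $\rho_\tau$ is continuous at $a_{-\v}\in I_i$), which is bounded below by $(\inf_I\rho_\tau)^2>0$, while the remaining summands are nonnegative.

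\textbf{Main obstacle.} The only genuinely delicate step is the first one, namely the rigorous justification of the product structure of $\mu_{\eps,\p^*}$ from the coordinate-wise definition of $\Phi_{\eps,\p^*}$ combined with the uniqueness clause of Theorem~\ref{thm:main1}. Once this product form is established, all remaining work is a standard one-dimensional $BV$/continuity asymptotic together with combinatorial bookkeeping over $\mathcal N_{\p^*}$. A secondary mild point is the symmetric average $\tfrac12(\rho(a^+)+\rho(a^-))$, which relies on the $A_{\delta,-\v}$ shrinking in a balanced way around $a_{-\v}$, implicit in the paper's setting.
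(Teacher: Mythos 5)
Your Part~1 argument is essentially the paper's: decouple $T_{\eps,\p^*}$ at $\p^*$ to obtain $\pi_{\Lambda}*\mu_{\eps,\p^*}=\pi_{\Lambda\setminus\{\p^*\}}*\mu_{\eps,\p^*}\otimes\mu_\tau$, decompose $H_\delta(\p^*)$ into the pairwise-disjoint collision sets indexed by $\v\in V$, then use the $C^1$ regularity of $\rho_\tau$ and the one-sided limits of the $BV$ marginals $\rho_{\eps,\q}$. That part is fine.

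Your Part~2 argument, however, has a real gap. You take the sum $\sum_{(\q,\v)\in\mathcal N_{\p^*}}\rho_\tau(a_\v)\tfrac{\rho_{\eps,\q}(a_{-\v}^+)+\rho_{\eps,\q}(a_{-\v}^-)}{2}$ at face value and argue that the entries with $\q=\p^*$ already give $\rho_\tau(a_\v)\rho_\tau(a_{-\v})\ge(\inf\rho_\tau)^2>0$ (using $\rho_{\eps,\p^*}=\rho_\tau$). But look at what $\mu_{\eps,\p^*}(H_\delta(\p^*))$ actually is: by the product decomposition it equals
\[
\sum_{\v\in V}\mu_\tau(A_{\delta,\v})\int_{A_{\delta,-\v}}\rho_{\eps,\p^*+\v}\,dm ,
\]
i.e.\ the only densities that enter the asymptotic are the neighbouring marginals $\rho_{\eps,\p^*+\v}$ with $\p^*+\v\neq\p^*$. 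The pairs $(\p^*,\v)\in\mathcal N_{\p^*}$, if inserted into the decomposition written in the paper's display before (4.5), would yield $A_{\delta,\v}(\p^*)\cap A_{\delta,-\v}(\p^*)=\emptyset$ and contribute nothing; the $|\mathcal N_{\p^*}|$ normalisation encodes a double-count of the $2d$ physical collision sets, not $2d$ extra terms with $\rho_\tau\cdot\rho_\tau$. So the term you isolate does not appear in the quantity that must be bounded below, and your positivity argument is vacuous. This is exactly why the paper's Part~2 proof is as involved as it is: it establishes (i) a uniform $BV$ bound $|\rho_{\eps,\q}|_{BV}\le C$, (ii) $L^1$ convergence $\rho_{\eps,\q}\to\rho_\tau$ as $\eps\to0$ via a second decoupling $\mathcal L_{\eps,\p^*,\q}$ and Keller--Liverani stability, and then (iii) a contradiction argument using $\inf\rho_\tau>0$ to rule out $\rho_{\eps,\q}$ vanishing near $a_{-\v}$. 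You need something of that sort for $\q=\p^*+\v$; there is no shortcut through $\rho_{\eps,\p^*}=\rho_\tau$.

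A smaller point: when you invoke ``uniqueness of the invariant measure in $\mathcal B$'' to justify the full product structure $\mu_{\eps,\p^*}=\mu_\tau\otimes(\text{something})$, note that what the lemma actually needs and what the paper actually uses is the weaker statement that the conditional law of $\pi_{\ZZ^d\setminus\{\p^*\}}$ given $\pi_{\p^*}$ factors off $\mu_\tau$; this follows from the fact that $T_{\eps,\p^*}$ is a skew product with base $\tau$ acting only on the $\p^*$ coordinate, which does not by itself give full independence of all coordinates from $x_{\p^*}$ without an argument (the paper's ``decoupling in $\p^*$'' remark). You should spell that step out rather than appeal to uniqueness as a black box.
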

\begin{proof}
Since $\mu_{\eps,\p^*}\in\mathcal B$, for any finite set\footnote{the expression $\mu \ll \nu$ means that $\mu$ is absolutely with respect to $\nu$.} $\Lambda \subset \ZZ^d$, $\pi_\Lambda*\mu_{\eps,\p^*} \ll \pi_\Lambda*m_{\ZZ^d}$. Thus
\begin{align}
    \mu_{\eps,\p^*}(H_\delta)&=\sum_{(\q,\v)\in \mathcal N_{\vp^*}}\mu_{\eps,\p^*}(A_{\delta,\v}(\q)\cap A_{\delta,-\v}(\vp^*))\label{eqdecouple1}\\
    &=\sum_{(\q,\v)\in \mathcal N_{\vp^*}}\int_{A_{\delta,\v}}\rho_{\tau}dm\int_{A_{\delta,-\v}}d\pi_{\q}*\mu_{\eps,\p^*}\label{eqdecouple2}\\ 
       &=\sum_{(\q,\v)\in \mathcal N_{\vp^*}}\mu_{\tau}(A_{\delta,\v})\int 1_{A_{\delta,-\v}}\rho_{\eps,\q}dm\\
       &
       =\sum_{(\q,\v)\in \mathcal N_{\vp^*}}\frac{\mu_{\tau}(A_{\delta,\v})}{m(A_{\delta,\v})}m(A_{\delta,\v})m(A_{\delta,-\v})\frac{1}{m(A_{\delta,-\v})}\int_{A_{\delta,-\v}}\rho_{\eps,\q}dm
       \\
       & \underset{\delta \to 0}\sim \sum_{(q,\v)\in \mathcal N_{\vp^*}}\rho_{\tau}(a_\v)m(A_{\delta,\v})m(A_{\delta,-\v})\frac{\rho_{\eps,\q}(a_{-\v}^+)+\rho_{\eps,\q}(a_{-\v}^-)}{2}.\label{eqdecouple3}
\end{align}
where in \eqref{eqdecouple3} we used the fact that the density $\rho_\tau$ is $C^1$ and the fact that since $\mu_{\eps,\p^*}\in\mathcal B$, the limits $\rho_{\eps,q}(a_{-\v}^+):=\lim_{x\to a_{-\v}^+}\rho_{\eps,\q}(x)$ and $\rho_{\eps,\q}(a_{-\v}^-)=\lim_{x\to a_{-\v}^{-}}\rho_{\eps,\q}(x)$ are well defined. Note that the passage from \eqref{eqdecouple1} to \eqref{eqdecouple2} is due to the decoupling of the measure $\mu_{\eps,\p^*}$ in $\vp^*$ (no collision with the site $\p^*$) which can be seen as $\pi_{\Lambda}*\mu_{\eps,\p^*} =\pi_{\Lambda\backslash\{\vp^*\}}*\mu_{\eps,\p^*}\otimes \mu_\tau$ with $\mu_\tau$ being the invariant measure associated to the site transformation $\tau$.

To prove the last statement of Lemma \ref{lemrefdenom}, i.e
\begin{align*}
\frac{1}{|\mathcal N_{\vp^*}|} \sum_{(q,\v)\in \mathcal N_{\vp^*}}\rho_{\tau}(a_\v)\frac{\rho_{\eps,\q}(a_{-\v}^+)+\rho_{\eps,\q}(a_{-\v}^-)}{2}>0
\end{align*}
we first prove the following two statements
\begin{itemize}
    \item[i)] There is a constant $C>0$ such that $\forall \eps>0$, $|\rho_{\eps,\q}|_{BV}\leq C$.
\item[ii)] Moreover, $\lim_{\eps \to 0}|\rho_\tau -\rho_{\eps,\q}|_{L^1(m)}=0$.
\end{itemize}

To prove statement i), notice that by Proposition \ref{propLasota-perturbed0}, $\exists\, C>0$, independent of $\eps$, such that $\|\mu_{\eps,\p^*}\|<C$. Hence, by definition of $\rho_{\eps,\q}$ we also have $|\rho_{\eps,\q}|_{BV}\le 
\|\mu_{\eps,\p^*}\|<C$.

We now prove the statement ii). Define $T_{\eps,\p^*,\q}=T_0\circ \Phi_{\eps,\p^*,\q}$ the map decoupled in $\p^*$ and $\q$ where for $r\in \ZZ^d$, 
\begin{equation}\label{decoupling2}
(\Phi_{\eps,\p^*,\q}(x))_r=\begin{cases}
(\Phi_{\eps,\p^*}(x))_r \quad \text{if } r-\q\notin V\cup\{0\}\\
       (\Phi_{\eps,\p^*}(x))_r \quad \text{if } \v=r-\q \text{ and } x_{r}\notin A_{\eps,-\v}\\
       x_r  \quad\quad\text{otherwise.}
       \end{cases}
\end{equation}

We denote by $\mathcal L_{\eps,\p^*,\q}$ the associated operator decoupled at sites $\p^*$ and $\q$. It satisfies the Lasota-Yorke inequality of Proposition \ref{propLasota-perturbed0}. We recall from \eqref{eq:contraction} we proved a spectral gap for $\mathcal L_{\eps,\p^*}$ with the following properties : 
given $\nu\in \B$,
$$
\mathcal L_{\eps,\p^*}\nu=\nu(1)\mu_{\eps,\p^*}+Q_{\eps,\p^*}(\nu).
$$
with $\|Q^n_{\eps,\p^*}\|<C\gamma^n$ for some $\gamma\in(0,1)$. The same strategy also holds for $\mathcal L_{\eps,\p^*,\q}$ : 
$$
\mathcal L_{\eps,\p^*,\q}\nu=\nu(1)\mu_{\eps,\p^*,\q}+Q_{\eps,\p^*,\q}(\nu).
$$
with $\|Q^n_{\eps,\p^*,\q}\|<C\gamma^n$ and $\mu_{\eps,\p^*,\q}$ the invariant measure for $T_{\eps,\p^*,\q}$.
Notice that $(\mu_{\eps,\p^*,\q})_{\q}=\rho_\tau$. Indeed, taking any local test function $\phi=\phi\circ \pi_\q$ at site $\q$, the invariant property of $\mu_{\eps,\p^*,\q}$ implies
\begin{align}
\int \phi d\mu_{\eps,\p^*,\q}=\int\phi \circ T_{\eps,\p^*,\q} d\mu_{\eps,\p^*,\q}&=\int \phi \circ \pi_q\circ T_{\eps,\p^*,\q}(\mu_{\eps,\p^*,\q})_{\q} dm\nonumber\\
&=\int  \phi \circ \tau (\mu_{\eps,\p^*,\q})_{\q} dm.    
\end{align}
Thus, $(\mu_{\eps,\p^*,\q})_{\q}$ is the unique invariant density for $\tau$ absolutely continue with respect to Lebesgue measure. Hence, it coincides with the invariant density $\rho_\tau$. Moreover,
 
\begin{align}
  \sup_{\|\mu\|\le 1}\left|\left((\LG_{\eps,\p^*}-\mathcal L_{\eps,\p^*,\q})\mu\right)(1)\right|\leq \eta_\eps.
\end{align}
 Therefore, by the Keller-Liverani \cite{KL99} stability result
\begin{align}
   \lim_{\eps\to 0} | \mu_{\eps,\p^*}-\mu_{\eps,\p^*,\q}| =0.
\end{align}
and by the linearity of conditioning over the state of all sites except $\q$, we deduce that
\begin{align*}
\lim_{\eps \to 0}|\rho_\tau -\rho_{\eps,\q}|_{L^1(m)}=0,    
\end{align*}
which concludes statement ii) above.
We now use 
(i) and (ii) to prove the
 $\rho_{\eps,\q}(a_{\v})>0$. 
To do so, we suppose that $\forall \eps >0$, $\rho_{\eps,\q}$ is continuous at $a_{\v}$ with $\rho_{\eps,q}(a_{\v})=0$ and then reach a contradiction.

Recall that $\inf_{x\in [0,1]}\rho_\tau(x)=\kappa>0$. We suppose that $\forall \eps >0$, $\rho_{\eps,\q}$ is continuous at $a_{\v}$ and $\rho_{\eps,\q}(a_{\v})=0$. Then statement i) implies that $\forall \beta ,\eps>0$, $\exists \gamma_{\beta}>0$ such that $\rho_{\eps,\q}(x)\leq \gamma_{\beta}$ for all $x$ s.t $|x-a_{\v}|\leq \beta$.
Because of statement ii), $|\rho_\tau-\rho_{\eps,\q}|_{L^1}=\xi_{\eps}=o(1)$, 
we obtain
\begin{align}
    \xi_{\eps}\geq |\int_{-\beta}^\beta \rho_\tau(x)dm-\int_{-\beta}^\beta \rho_{\eps,\q}dm|\geq 2\beta (\kappa -\gamma_\beta).
\end{align}
Consequently, fixing $\beta$ small enough so that $\kappa -\gamma_\beta>0$ we obtain a contradiction by letting $\eps\to 0$. Thus for $\eps>0$ small enough,
\begin{align*}
\frac{1}{|\mathcal N_{\vp^*}|} \sum_{(q,\v)\in \mathcal N_{\vp^*}}\rho_{\tau}(a_\v)\frac{\rho_{\eps,\q}(a_{-\v}^+)+\rho_{\eps,\q}(a_{-\v}^-)}{2}>0.
\end{align*}
\end{proof}
\begin{lem}\label{lem:verirare}
The following holds   
\begin{enumerate}
\item $\eta_\delta\to 0$ as $\delta\to 0$;
\item $\exists\, C>0$ such that $\eta_\delta\cdot\|(\LG_{\eps,\p^*}-\hat\LG_{\eps,\delta,\p^*})\mu_{\eps,\p^*}\|\le C |\Delta_\delta|$.
\end{enumerate}
\end{lem}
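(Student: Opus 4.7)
The plan is to first reduce both $\eta_\delta$ and $\Delta_\delta$ to measures of the rare-event set $H_\delta(\p^*)$, and then combine the uniform Lasota-Yorke inequality of Proposition \ref{propLasota-perturbed0} with the Lebesgue-scale estimate of Lemma \ref{lemrefdenom}. For any complex Borel measure $\mu$ and any $\varphi\in\mathcal{D}$ the defining identities for the two operators give $(\LG_{\eps,\p^*}\mu-\hat\LG_{\eps,\delta,\p^*}\mu)(\varphi)=\int\varphi\circ T_{\eps,\p^*}\,1_{H_\delta(\p^*)}\,d\mu$, and specialising to $\varphi=1\in\mathcal{D}_1$ yields the clean identifications $\eta_\delta=\sup_{\|\mu\|\le 1}|\mu(H_\delta(\p^*))|$ and $\Delta_\delta=\mu_{\eps,\p^*}(H_\delta(\p^*))$. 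Assertion (1) is then immediate: since $1\in\mathcal{D}_1$ we have the trivial inequality $|\nu(1)|\le|\nu|$, and Lemma \ref{lem:close} supplies $\sup_{\|\mu\|\le 1}|(\LG_{\eps,\p^*}-\hat\LG_{\eps,\delta,\p^*})\mu|=\mathcal{O}(\delta)$, so $\eta_\delta=\mathcal{O}(\delta)\to 0$.

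For assertion (2), the key algebraic move is to exploit the $\LG_{\eps,\p^*}$-invariance of $\mu_{\eps,\p^*}$: writing $\hat\LG_{\eps,\delta,\p^*}\mu_{\eps,\p^*}=\LG_{\eps,\p^*}(1_{X_{0,\delta}(\p^*)}\mu_{\eps,\p^*})$ one obtains $(\LG_{\eps,\p^*}-\hat\LG_{\eps,\delta,\p^*})\mu_{\eps,\p^*}=\LG_{\eps,\p^*}(1_{H_\delta(\p^*)}\mu_{\eps,\p^*})$. Applying the Lasota-Yorke inequality to this image produces $\|(\LG_{\eps,\p^*}-\hat\LG_{\eps,\delta,\p^*})\mu_{\eps,\p^*}\|\le\alpha\|1_{H_\delta(\p^*)}\mu_{\eps,\p^*}\|+C|1_{H_\delta(\p^*)}\mu_{\eps,\p^*}|$. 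The total variation term is precisely $\mu_{\eps,\p^*}(H_\delta(\p^*))=|\Delta_\delta|$, and by Lemma \ref{lemrefdenom} this is of order $\delta^2$ with a strictly positive prefactor. Once the strong-norm term is shown to be $\mathcal{O}(\delta)$, the full product $\eta_\delta\cdot\|(\LG_{\eps,\p^*}-\hat\LG_{\eps,\delta,\p^*})\mu_{\eps,\p^*}\|$ is $\mathcal{O}(\delta^2)$, and the lower bound $|\Delta_\delta|\ge c\delta^2$ from Lemma \ref{lemrefdenom} then delivers the desired domination $\eta_\delta\cdot\|(\LG_{\eps,\p^*}-\hat\LG_{\eps,\delta,\p^*})\mu_{\eps,\p^*}\|\le C|\Delta_\delta|$.

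The main obstacle is therefore the strong-norm bound $\|1_{H_\delta(\p^*)}\mu_{\eps,\p^*}\|=\mathcal{O}(\delta)$. To establish it I would test the definition of $\|\cdot\|$ against $\partial_\p\varphi$ with $\varphi\in\mathcal{D}_1$ and exploit the fact that $H_\delta(\p^*)$ is a finite union of cylindrical $\delta\times\delta$ squares of the form $A_{\delta,\v}(\q)\cap A_{\delta,-\v}(\p^*)$, each involving at most the two sites $\p^*$ and $\q\in\p^*+V$. Enlarging $\Lambda$ to contain both the support of $\varphi$ and the collision sites reduces the estimate to a computation on the bounded-variation marginal density $\rho_{\eps,\Lambda}$ of $\mu_{\eps,\p^*}$, which can be split according to whether the differentiation direction $\p$ lies in $\{\p^*\}\cup(\p^*+V)$ or is ``unrelated''. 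Integration by parts in $x_\p$ produces an interior contribution governed by $m(H_\delta(\p^*))=\mathcal{O}(\delta^2)$ and boundary jumps of $1_{H_\delta(\p^*)}$ along pairs of opposite $\delta$-long faces of each collision square, giving an $\mathcal{O}(\delta)$ contribution. The delicate point is that multiplication by $1_{H_\delta(\p^*)}$ does not merely inherit $\|\mu_{\eps,\p^*}\|$ but picks up precisely the transverse side length $\delta$ of the collision rectangles; recovering this scaling amounts to recycling the Sobolev/Cauchy-Schwarz mechanism already used in the proof of Lemma \ref{lem:close}, now applied to the indicator's jumps rather than to the indicator itself, and this is the technical heart of the argument.
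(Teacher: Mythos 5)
Part~(1), together with the identifications $\eta_\delta=\sup_{\|\mu\|\le 1}|\mu(H_\delta(\p^*))|$ and $\Delta_\delta=\mu_{\eps,\p^*}(H_\delta(\p^*))$, is correct and matches the paper. For part~(2), your detour through the Lasota--Yorke inequality is legitimate but does not save work: it leaves you with the burden of showing $\|1_{H_\delta(\p^*)}\mu_{\eps,\p^*}\|=\mathcal O(\delta)$, which is exactly as hard as the paper's direct estimate $\|(\LG_{\eps,\p^*}-\hat\LG_{\eps,\delta,\p^*})\mu_{\eps,\p^*}\|\le 2\|\mu_{\eps,\p^*}\|^2\delta$; the same computation is lurking either way.

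The genuine gap is in that computation, which you only sketch. When the differentiation direction $r$ lies in $\{\p^*\}\cup(\p^*+V)$, integration by parts in $x_r$ produces a boundary trace term, and your plan to ``recycle the Sobolev/Cauchy-Schwarz mechanism from Lemma~\ref{lem:close}, now applied to the indicator's jumps,'' does not close. The estimate in Lemma~\ref{lem:close} controls $\int 1_{A_{\delta,\v}}1_{A_{\delta,-\v}}h\,dm$ for an $h$ of bounded variation in two variables; a jump in, say, the $x_{\p^*}$ direction instead leaves an integral of the slice $h(a_\v^\pm,\cdot)$ over a $\delta$-thin strip in $x_\q$, and the restriction of a two-variable BV function to a fixed slice is \emph{not} controlled by its BV norm, so generically this gives $\mathcal O(1)$ rather than $\mathcal O(\delta)$. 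What actually makes the paper's estimate work is a structural ingredient your sketch never invokes: because $T_{\eps,\p^*}$ is decoupled at site $\p^*$, the marginal of $\mu_{\eps,\p^*}$ on $\{\p^*,\q\}$ factorizes, $(\mu_{\eps,\p^*})_{\{\p^*,\q\}}=(\mu_{\eps,\p^*})_{\p^*}\otimes(\mu_{\eps,\p^*})_{\q}$ with $(\mu_{\eps,\p^*})_{\p^*}=\rho_\tau$. This lets one factor (a one-dimensional BV density) absorb the jump with an $\mathcal O(1)$ bound while the other factor contributes $\int_{A_{\delta,\v}}\rho_\tau\,dm=\mathcal O(\delta)$. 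Without this decoupling the $\mathcal O(\delta)$ bound on the strong norm, and hence part~(2), does not follow from your argument.
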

\begin{proof}
The first statement is a direct consequence of Lemma \ref{lem:close}, notice that we actually proved
    \begin{align}\label{eqcond1kl}
        \eta_\delta =\sup_{\mu \in \mathcal B} \frac{|\mu(H_\delta)(\p^*)|}{\|\mu\|}\leq 4d(m_{\ZZ^d}(H_\delta(\p^*)))^{1/2}.
    \end{align}
To prove the second statement, we first upper bound for $\|(\LG_{\eps,\p^*}-\hat\LG_{\eps,\delta,\p^*})\mu_{\eps,\p^*}\|$.

{Let $\tilde \phi\in \D_1$ a local coordinate map and let $\Lambda_0\subset \ZZ^d$ finite and $\varphi:I^{\Lambda_0}\mapsto \mathbb R$ such that $\tilde \phi=\varphi\circ\pi_{\Lambda_0}$ and let $\Lambda:=\Lambda_0\cup \{\p+\v, \p\in \Lambda_0, \v \in V\}$. Notice that, there is a map $\tilde T_{\eps,\p^*}:I^{\Lambda}\mapsto I^{\Lambda}$ such that $\pi_{\Lambda_0}\circ\tilde T_{\epsilon,\p^*}\circ \pi_{\Lambda}=\pi_{\Lambda_0}\circ T_{\eps,\p^*}$. Thus for $\varphi \in \D$ depending on local coordinates in $\Lambda_0$.

\begin{align*}
&\int \partial_r \varphi d\left[(\LG_{\eps,\p^*}-\hat\LG_{\eps,\delta,\p^*})\mu_{\eps,\p^*}\right]\\
&=\sum_{(\q,\v)\in \mathcal N_{\p^*}}\int \partial_r \varphi\circ \pi_{\Lambda_0}\circ T_{\eps, \p^*} 1_{A_{\delta,-\v}}\circ \pi_{\q} 1_{A_{\delta,\v}}\circ \pi_{\p^*} d\mu_{\eps,\p^*}\\
&=\sum_{(\q,\v)\in \mathcal N_{\p^*}}\int \partial_r \varphi\circ \pi_{\Lambda_0}\circ \tilde T_{\eps, \p^*} 1_{A_{\delta,-\v}}\circ \pi_{\q} 1_{A_{\delta,\v}}\circ \pi_{\p^*} \rho_{\eps,\p^*}\rho_{\eps,\Lambda\backslash \{\p^*\}}dm_{\Lambda}\\
&=\sum_{(\q,\v)\in \mathcal N_{\p^*}}\int \int \partial_r \varphi\circ \pi_{\Lambda_0}\circ \tilde T_{\eps, \p^*} 1_{A_{\delta,-\v}}\circ \pi_{\q} \rho_{\eps,\Lambda\backslash \{\p^*\}}dm_{\Lambda\backslash \{\p^*\}}  1_{A_{\delta,\v}}\rho_{\eps,\p^*}dm\\
\end{align*}
Recall that $\rho_{\eps,\Lambda_0}$ is the density of $\pi_{\Lambda_0}*\mu_{\eps,\p^*}$.
We now distinguish different cases according to whether $r=\p^*$ or not. Suppose first that $r\neq \p^*$,

\begin{align}
    &\int \partial_r \varphi\circ \pi_{\Lambda_0}\circ \tilde T_{\eps, \p^*} 1_{A_{\delta,-\v}}\circ \pi_{\q} \rho_{\eps,\Lambda\backslash \{\p^*\}}dm_{\Lambda\backslash \{\p^*\}}  1_{A_{\delta,\v}}\circ \pi_{\p^*}\rho_{\eps,\p^*}dm\nonumber\\
    &=\int 1_{A_{\delta,\v}}(\mu_{\eps,\p^*})_{\p^*}\int \partial_r \varphi\circ \pi_{\Lambda_0}\circ \tilde T_{\eps, \p^*} 1_{A_{\delta,-\v}}\circ \pi_{\q} \rho_{\eps,\Lambda\backslash \{\p^*\}}dm_{\Lambda\backslash \{\p^*\}}  dm.\label{eqref:phip}
\end{align}
Then one can introduce for $\vx'\in I^{\Lambda_0}$ the function $\varphi_{x_{\p^*}} (\vx'):=\varphi(\tau x_{\p^*},\vx'_{\neq \p^*})$ where we used the representation $\vx'=( \vx'_{\p^*},\vx'_{\neq \p^*})\in I\times I^{\Lambda_0 \backslash\{\p^*\}}\simeq I^{\Lambda_0}$. Notice that $\varphi \in \D_1$ and $\|\varphi\|_\infty\leq 1$, furthermore for any $\vx_1,\vx_2\in I^\Lambda_0$ such that $\pi_{\Lambda\backslash\{\p^*\}}(\vx_1)=\pi_{\Lambda\backslash\{\p^*\}}(\vx_2)$,  
\begin{align*}
\partial_r\varphi_{\vx_{\p^*}}\circ \pi_{\Lambda_0}\circ \tilde T_{\eps, \p^*}(\vx_1)&=\partial_r\varphi_{\vx_{\p^*}}\circ \pi_{\Lambda_0}\circ \tilde T_{\eps, \p^*}(\vx_2)\\
&=\int \partial_r\varphi_{\vx_{\p^*}}\circ \pi_{\Lambda_0}\circ \tilde T_{\eps, \p^*}(y,\pi_{\Lambda\backslash\{\p^*\}}(\vx_2)) \rho_{\eps,\p^*}(y)dm(y). 
\end{align*}
Plugging this relation in equation \eqref{eqref:phip},
\begin{align*}
    &\left|\int \partial_r \varphi\circ \pi_{\Lambda_0}\circ \tilde T_{\eps, \p^*} 1_{A_{\delta,-\v}}\circ \pi_{\q} \rho_{\eps,\Lambda\backslash \{\p^*\}}dm_{\Lambda\backslash \{\p^*\}} \right|\\
     &=\left|\int \partial_r \varphi_{\vx_{\p^*}}\circ \tilde T_{\eps, \p^*} 1_{A_{\delta,-\v}}\circ \pi_{\q} \rho_{\eps,\Lambda\backslash \{\p^*\}}dm_{\Lambda\backslash \{\p^*\}} \right|\\
    &=\left|\int \partial_r \varphi_{\vx_{\p^*}}\circ \pi_{\Lambda_0}\circ T_{\eps, \p^*} 1_{A_{\delta,-\v}}\circ \pi_{\q}d\mu_{\eps,\p^*} \right|\\
    &\leq \|\LG_{\eps,\p^*}(1_{A_{\delta,-\v}}\circ \pi_{\q}\mu_{\eps,\p^*})\|\\
    &\leq C \|\mu_{\eps,\p^*}\|,
\end{align*}
the last equation being a consequence of Proposition \ref{propLasota-perturbed0}. Since $\rho_{\eps,\p^*}$ is one dimensional bounded variation marginals, we deduce that $\|\rho_{\eps,\p^*}\|_{\infty}\leq \|\mu_{\eps,\p^*}\|$. We then deduce that, 

\begin{align}\label{eqpartialphibv}
    &\left|\int \partial_r \varphi\circ \pi_{\Lambda_0}\circ \tilde T_{\eps, \p^*} 1_{A_{\delta,-\v}}\circ \pi_{\q} \rho_{\eps,\Lambda\backslash \{\p^*\}}dm_{\Lambda\backslash \{\p^*\}}  1_{A_{\delta,\v}}\circ \pi_{\p^*}\rho_{\eps,\p^*}dm\right|\nonumber\\
&\leq \|\mu_{\eps,\p^*}\|\left|\int  1_{A_{\delta,\v}}\circ \pi_{\p^*}\rho_{\eps,\p^*}dm\right|\nonumber\\
&\leq \|\mu_{\eps,\p^*}\|\left|\int  1_{A_{\delta,\v}} \|\mu_{\eps,\p^*}\|dm\right|\nonumber\\
&\leq \|\mu_{\eps,\p^*}\|^2\left|\int  1_{A_{\delta,\v}}dm\right|\nonumber\\
&\leq \|\mu_{\eps,\p^*}\|^2\delta.
\end{align}

Now if one suppose $r=p^*$, for $\vx\in I^{\Lambda}$, we can fix the local map defined for $\vx'_{\p^*}\in I$ as
$$\varphi_{\vx_{\neq \p^*}}(\vx'_{\p^*}):=\varphi(\vx'_{\p^*},\pi_{\Lambda_0\backslash \{\p^*\}}(\tilde T_{\eps,\p^*}\vx_{\neq \p^*})).$$ Notice that $(\pi_{\Lambda_0}\circ \tilde T_{\eps, \p^*} (\vx'))_{\p^*}=\tau\vx'_{\p^*}$ for $\vx\in I^{\Lambda}$
then the same reasoning as when $r\neq \p^*$ holds,

\begin{align*}
    &\left|\int \partial_{\p^*} \varphi\circ \pi_{\Lambda_0}\circ \tilde T_{\eps, \p^*} 1_{A_{\delta,-\v}}\circ \pi_{\q} \rho_{\eps,\Lambda\backslash \{\p^*\}}dm_{\Lambda\backslash \{\p^*\}}  1_{A_{\delta,\v}}\circ \pi_{\p^*}\rho_{\eps,\p^*}dm\right|\\
       =&\left|\int \partial_{\p^*} \varphi_{\vx_{\neq \p^*}} (\tau x)  1_{A_{\delta,\v}}(x)\rho_{\eps,\p^*}(x)dm(x) 1_{A_{\delta,-\v}}\circ \pi_{\q} \rho_{\eps,\Lambda\backslash \{\p^*\}}dm_{\Lambda\backslash \{\p^*\}}(\vx_{\neq\p^*})\right|\\
    &\leq \left|\int \|\LG_\tau(1_{A_{\delta,\v}}\mu_\tau)\| 1_{A_{\delta,-\v}}\circ \pi_{\q} \rho_{\eps,\Lambda\backslash \{\p^*\}}dm_{\Lambda\backslash \{\p^*\}}\right|\\
   & \leq C\|\mu_\tau\|\int 1_{A_{\delta,-\v}}\circ \pi_{\q} \rho_{\eps,\Lambda\backslash \{\p^*\}}dm_{\Lambda\backslash \{\p^*\}}\\
   & \leq C\|\mu_\tau\|\int 1_{A_{\delta,-\v}} \rho_{\eps,\q}dm_{\q}\\
    &\leq  \|\mu_{\eps,\p^*}\|^2\delta,
\end{align*}
where we also used the fact that $(\rho_{\eps,\q})$ is a one dimensional map and thus $\|\rho_{\eps,\q}\|_{\infty}\leq \|\mu_{\eps,\p^*}\|$.
}
Therefore,
\begin{align*}
    \left\|(\LG_{\eps,\p^*}-\hat\LG_{\eps,\delta,\p^*})\mu_{\eps,\p^*}\right\| \leq 2\|\mu_{\eps,\p^*}\|^2\delta,
\end{align*}
and the above inequality together with \eqref{eqcond1kl} gives us, by Lemma \ref{lemrefdenom}, the second statement of the lemma.
\end{proof}
\subsection{Existence of $\theta$}
We now have all the requirements for the abstract framework of \cite{KL09'}: (A1)-(A4) of \cite{KL09'} follow from Proposition \ref{propLasota-perturbed0} and Theorem \ref{thm:main1}, while (A5)-(A6) of \cite{KL09'} follow from Lemma \ref{lem:verirare}.
We introduce the following notation\footnote{$\q_{k,\delta}$ depends on $\eps$, but we drop it from its notation, since $\eps$ will be kept fixed at this point and we will only consider limits in $\delta$.}
\begin{align}\label{eq:qex}
    q_{k,\delta}:=\frac{\mu_{\eps,\vp^*}\left(H_\delta(\vp^*)\cap \bigcap_{i=1}^k T_{\eps,\vp^*}^{-i}X_{0,\delta}(\vp^*)\cap T_{\eps,\vp^*}^{-(k+1)}H_\delta(\vp^*)\right)}{\mu_{\eps,\vp^*}\left(H_\delta(\vp^*)\right)}.
\end{align}
and let 
\begin{equation}\label{eq:qlimit}
q_k:=\lim_{\delta \to 0}q_{k,\delta}.    
\end{equation} 
If the previous limit exists we then set
\begin{equation}\label{eq:thetak}
\theta=1-\sum_{k=0}^{\infty}q_k
\end{equation}
 and, following the abstract framework of \cite{KL09'}, we obtain
\begin{equation}\label{eq:KLtheta}
\lim_{\delta\to 0}\frac{1-\lambda_{\eps,\delta}}{\mu_{\eps,\p^*}(H_{\delta}(\p^*)}=\theta.
\end{equation}
In this subsection we prove, for any $k\in \N$, that the limit in \eqref{eq:qlimit} exists and we find $\theta$ explicitly. Notice that for fixed $k$ the characteristic function $$1_{H_\delta(\vp^*)\cap \bigcap_{i=1}^k T_{\eps,\vp^*}^{-i}X_{0,\delta}(\vp^*)\cap T_{\eps,\p^*}^{-(k+1)}H_\delta(\vp^*)}$$ 
   depends only on the $k+1$ coordinates around the site $\vp^*$. Thus, we can 
   fix the box $\Lambda:=\vp^* + \{-(k+1),\dots,(k+1)\}^d$ around the vector $\vp^*$ and let $\rho_{\eps,\Lambda}$ be the density 
   of $\pi_{\Lambda}*\mu_{\eps,\vp^*}$.  Using the above and equation \eqref{eq:qex}, we  can rewrite $q_{k,\delta}$ as follows: 
\begin{align}\label{eq:qex1}
    q_{k,\delta}=\frac{1}{\mu_{\eps,\vp^*}\left(H_\delta(\vp^*)\right)}\int_{I^\Lambda}1_{H_\delta(\vp^*)\cap \bigcap_{i=1}^k T_{\eps,\vp^*}^{-i}X_{0,\delta}(\vp^*)\cap T_{\eps,\p^*}^{-(k+1)}H_\delta(\vp^*)} \rho_{\eps,\Lambda}dm_{\Lambda},
\end{align}
We have already dealt with the denominator of the expression for $q_{k,\delta}$ in \eqref{eq:qex1} in Lemma \ref{lemrefdenom}.

We introduce the following notation, for any subset $A \subset \ZZ^d$,
\begin{align*}
    (\Psi_k^{\q}\in A):=\{\vx\in I^{\mathbb Z^d}, \Psi_k^{\q}(\vx)\in A\}.
\end{align*}
Notice that the set $(\Psi_k^{\q}\neq \q')=(\Psi_k^{\q}\in \ZZ^d\setminus\{\q'\})$. We now prove that for any $(\q,\v),(\q',\v') \in \mathcal{N}_{\p^*}$, the sets 
$$A_{\delta,-\v}(\q)\cap T_{\eps,\p^*}^{-(k+1)}A_{\delta,-\v'}(\q')\cap (\Psi_k^{\q}\neq \q')$$ have negligible weight. 
\begin{lem}\label{lemnegnonrec}
For  any $(\q,\v) \in \mathcal N_{\vp^*}$ and $k>0$,
\begin{align}\label{eqneglreturn}
\lim_{\delta \to 0}\frac{1}{\mu_{\eps,\p^*}(A_{\delta,\v}(\q))}\mu_{\eps,\p^*}\left(A_{\delta,\v}(\q)\cap T_{\eps,\vp^*}^{-k}A_{\delta,\v}(\q)\cap (\Psi_k^{\q}\neq \q)\right)=0
\end{align}
    In addition for $(\q',\v')\in \mathcal N_{\vp^*}$,
\begin{align*}
\lim_{\delta \to 0}\frac{1}{\mu_{\eps,\p^*}(A_{\delta,\v}(\q))}\mu_{\eps,\p^*}\left(A_{\delta,\v}(\q)\cap T_{\eps,\vp^*}^{-k}A_{\delta,\v'}(\q')\cap (\Psi_k^{\q}\neq \q')\right)=0. 
\end{align*}
\end{lem}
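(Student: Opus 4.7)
The plan is to reduce the event to a finite-dimensional integral, decompose it according to the value of the index map $\Psi_k^\q$, and on each resulting piece exploit the key identity $(T_{\eps,\vp^*}^k\vx)_\q=\tau^k(x_{\q''})$ with $\q''=\Psi_k^\q(\vx)\ne\q$. Since $\q''\ne\q$, the two rare constraints $x_\q\in A_{\delta,\v}$ and $(T_{\eps,\vp^*}^k\vx)_\q\in A_{\delta,\v}$ act on distinct coordinates and together should contribute $O(\delta^2)$; combined with the scaling $\mu_{\eps,\p^*}(A_{\delta,\v}(\q))\asymp\delta$ from Lemma \ref{lemrefdenom}, this yields the desired $O(\delta)\to 0$.

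Concretely, I would first localise: because each step of $T_{\eps,\vp^*}$ changes particle positions by at most one unit vector and involves only nearest-neighbour interactions, both $\Psi_k^\q(\vx)$ and the value of $(T_{\eps,\vp^*}^k\vx)_\q$ are measurable with respect to the coordinates in the finite box $\Lambda:=\q+\{-k,\dots,k\}^d$. Writing the probability as an integral against $\rho_{\eps,\Lambda}:=d(\pi_\Lambda*\mu_{\eps,\p^*})/dm_\Lambda$ on $I^\Lambda$, one decomposes $\{\Psi_k^\q\ne\q\}$ into the disjoint union $\bigsqcup_{\q''\in\Lambda\setminus\{\q\}}\{\Psi_k^\q=\q''\}$; applying the identity $(T_{\eps,\vp^*}^k\vx)_\q=\tau^k(x_{\q''})$ on each piece and dropping the constraint $\Psi_k^\q=\q''$ yields
\begin{equation*}
\mu_{\eps,\p^*}\!\left(A_{\delta,\v}(\q)\cap T_{\eps,\vp^*}^{-k}A_{\delta,\v}(\q)\cap(\Psi_k^\q\ne\q)\right)\le\sum_{\q''\in\Lambda\setminus\{\q\}}\int 1_{A_{\delta,\v}}(x_\q)\,1_{\tau^{-k}(A_{\delta,\v})}(x_{\q''})\,h_{\q,\q''}(x_\q,x_{\q''})\,dx_\q\,dx_{\q''},
\end{equation*}
where $h_{\q,\q''}$ denotes the two-dimensional marginal of $\rho_{\eps,\Lambda}$ at sites $\q,\q''$. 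Using $m(A_{\delta,\v})=\delta$ and $m(\tau^{-k}(A_{\delta,\v}))\le\mu_\tau(A_{\delta,\v})/\inf\rho_\tau=O(\delta)$ (from the $\tau$-invariance of $\mu_\tau$ and $\inf\rho_\tau>0$), each summand is $O(\delta^2)$ once a uniform $L^\infty$-bound on $h_{\q,\q''}$ is available; the $O(k^d)$ summands then add up to $O(\delta^2)$, and division by $\mu_{\eps,\p^*}(A_{\delta,\v}(\q))\asymp\delta$ produces the announced $O(\delta)\to 0$. The second statement follows by the same argument, decomposing by $\Psi_k^\q=\q''$ with $\q''\ne\q$ and using the identity $(T_{\eps,\vp^*}^k\vx)_{\q'}=\tau^k(x_{\q''})$ on each piece.

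The main delicate point is the uniform $L^\infty$-bound on the two-dimensional marginal $h_{\q,\q''}$. The definition of $\mathcal B$ controls distributional derivatives $\partial_\p h$ coordinate-by-coordinate, which yields bounded-variation and hence $L^\infty$ bounds for one-dimensional marginals (as exploited in Lemmas \ref{lem:close} and \ref{lem:verirare}), but not immediately for two-dimensional ones. When $\q=\vp^*$ the decoupling of $\mu_{\eps,\p^*}$ at $\vp^*$ provides the factorisation $h_{\vp^*,\q''}(x_{\vp^*},x_{\q''})=\rho_\tau(x_{\vp^*})\,\rho_{\eps,\q''}(x_{\q''})$ and thus the required $L^\infty$-bound directly, as used in Lemma \ref{lem:verirare}. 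When $\q\in\vp^*+V$ one has to bootstrap regularity from the $T_{\eps,\vp^*}$-invariance of $\mu_{\eps,\p^*}$ together with the expansion of $\tau$, analogously to the classical Lasota-Yorke mechanism: one iteration of the dynamics smooths the density in each site, propagating to give the required $L^\infty$-bound on $h_{\q,\q''}$ uniformly in $\delta$.
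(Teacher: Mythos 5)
Your reduction step is correct and matches the paper's: both identify that on $\{\Psi_k^{\q}\ne\q\}$ the condition $(T_{\eps,\vp^*}^k\vx)_\q\in A_{\delta,\v}$ forces some site $r\ne\q$ to satisfy $x_r\in\tau^{-k}A_{\delta,-\v}$, so the event is contained in $\pi_\q^{-1}A_{\delta,\v}\cap\pi_r^{-1}\tau^{-k}A_{\delta,-\v}$, and one must show this has $\mu_{\eps,\p^*}$-measure $o(\delta)$.

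The gap is exactly the point you flag as ``delicate'' at the end: the uniform $L^\infty$-bound on the two-dimensional marginal $h_{\q,r}$. The Banach space $\mathcal B$ only controls coordinate-by-coordinate distributional derivatives, i.e.\ quantities of the type $\int_y \mathrm{Var}_x\big(h(\cdot,y)\big)\,dy$ and its symmetric counterpart. This does not give $\|h_{\q,r}\|_\infty<\infty$; for example $h(x,y)=f(\max(x,y))$ with $f$ integrable but unbounded near $0$ has finite sliced-BV norm yet is unbounded. Your plan to ``bootstrap'' $L^\infty$ regularity via a Lasota--Yorke mechanism would require working in a genuinely two-dimensional BV (or $L^\infty$) space, which is not the functional framework of the paper, and there is no reason the fixed point $\mu_{\eps,\p^*}$ would land in such a space. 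Indeed, the paper's own proof explicitly notes that ``the density of $\pi_{\{\q,r\}}*\mu_{\eps,\p^*}$ is not necessarily bounded'' and ``the measure above does not break easily into a product of measures.''

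The paper therefore takes a different route, which is the missing idea. Define $G_\delta(y):=\int \rho(x,y)\,\frac{1_{A_{\delta,\v}}(x)}{m(A_{\delta,\v})}\,dx$, the $y$-marginal of $\rho$ averaged over the $\delta$-window in $x$. Testing against a function whose $x$-derivative is $1_{A_{\delta,\v}}/m(A_{\delta,\v})$ shows $\int|G_\delta|\le\|\rho\|$ uniformly in $\delta$ (this is where the $\mathcal B$-norm is used, but only to control a one-dimensional quantity). Weak-$*$ compactness then gives a subsequential limit $G_0$ which is absolutely continuous with respect to Lebesgue, and since the sets $B_\delta:=\tau^{-k}A_{\delta,-\v}$ shrink to a point, $\limsup_\delta \int G_\delta 1_{B_\delta}\,dy=0$. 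This replaces the (unavailable) pointwise $L^\infty$ bound by a soft compactness argument that only uses the sliced-BV control built into $\mathcal B$. When $r=\vp^*$ the decoupling at $\vp^*$ gives the product structure $\rho_\tau\otimes\rho_{\eps,\q}$ directly, as you noted, but for $r\ne\vp^*$ the weak-$*$ argument is what closes the proof, and your current proposal does not supply a substitute for it.
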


\begin{proof}
    The proof of equation \eqref{eqneglreturn} relies on the fact that if $\vx \in(\Psi_k^{\q}\neq \q )\cap T_{\eps,\vp^*}^{-k}A_{\delta,\v}(\q)$,
    then it means that there is some coordinate $r\in \ZZ^d$ such that $\tau^k x_r \in A_{\delta,-\v}$. 
  The full event is then a subset of $\pi_{\q}^{-1}A_{\delta,\v}\cap \pi_{r}^{-1}(\tau^{-k}A_{\delta,-\v})$.
Therefore, it remains to show that
    \begin{align*}
\mu_{\eps,\p^*}(\pi_{\q}^{-1}A_{\delta,\v}\cap \pi_{r}^{-1}(\tau^{-k}A_{\delta,-\v}))&=o(\mu_{\eps,\p^*}(A_{\delta,\v}(\q))).
    \end{align*}
Since the density of $\pi_{\{\q,r\}}*\mu_{\eps,\p^*}$ is not necessarily bounded, the measure above does not break easily into a product of measures, thus we prove the above control through what follows. We denote by $\rho(x,y):=\frac{d\mu_{\eps,\p^*}}{dm_{\{\p^*,\q\}}}$, let $G_\delta(y):= \int \rho(x,y) \frac{1_{A_{\delta,\v}}}{m(A_{\delta,\v})}dx$. Then one can choose $\phi: I^{\{\p^*,\q\}}\mapsto \mathbb{R}$ such that $\partial_x\phi(x,y):=\frac{1_{A_{\delta,\v}}}{m(A_{\delta,\v})}(x)$. Notice then that $\int |\int \rho(x,y) \frac{1_{A_{\delta,\v}}}{m(A_{\delta,\v})}dx|dy\leq \|\rho\|$.

Since $B(0,\|\rho\|)$ is relatively compact in $\left(L^\infty(X)\right)'=\{\mu\in \B,\mu \ll Leb\}$ for the weak$^*$ topology. Therefore, the sequence $G_\delta$ admits a subsequence that converges to an element $G_0\in \left(L^\infty(X)\right)'$.\\
Thus, for any $\delta_0>0$ $\int G_\delta 1_{B_{\delta_0}}dy \underset{\delta \to 0}\to \int 1_{B_{\delta_0}}G_0(dy)$. And since for any $\delta \leq \delta_0$,  $\int G_\delta 1_{B_{\delta_0}}dy\geq  \int G_\delta 1_{B_{\delta}}dy$ we deduce that $\limsup\int G_\delta 1_{B_{\delta}}dy\leq  \int 1_{B_{\delta_0}}dG_0$ for any $\delta_0>0$. Consequently, $\limsup\int G_\delta 1_{B_{\delta}}dy=0$ since $G_0(dy)$ is a positive measure absolutely continuous with respect to the Lebesgue measure. 
\end{proof}
For each\footnote{Recall that $S^{rec}$ is defined in \eqref{eq:recset}. It is specific to the site $\p^*$.} $(a_{\v},a_{-\v})\in S^{rec}$, let $\mathbb K(\v,\v')$ be the set of integers $k$ such that $(\tau^{k+1}a_{\v},\tau^{k+1}a_{-\v})=(a_{\v'},a_{-\v'})$. To each such $k$ we associate the set $J_k$ composed of $j_1\leq \dots \leq j_r\leq k$, the recurrent time into $S^{rec}$ up to time $k$ (i.e , $(\tau^{j_i}(a_{\v}),\tau^{j_i}(a_{-\v}))=(a_{\w_{j_i}},a_{-\w_{j_i}})$).

\begin{lem}\label{lem:thetalemma}
For any $k\in \N$, the limit in \eqref{eq:qlimit} exists :
\begin{equation*}
q_k:=\lim_{\delta \to 0}q_{k,\delta}.    
\end{equation*} 
If
$S^{rec}= \emptyset$ then $\theta=1$; otherwise $\theta \in(0,1)$ and is given by the following formula   
\begin{align*}
\theta=1-\sum_{\v,\v'\in V}\sum_{k\in {\KK(\v,\v')}}q_{k}(\v,\v'),
\end{align*}
where $q_{k}(\v,\v')$ is non zero whenever $(\tau^{k+1}(a_{\v}),\tau^{k+1}(a_{-\v}))=(a_{\v'},a_{-\v'})$ and is defined as follows : 

\begin{align*}
&q_{k}(\v,\v')=\frac{1}{\sum_{(q,\v)\in \mathcal N_{\vp^*}}\rho_{\tau}(a_\v)\frac{\rho_{\eps,\q}(a_{-\v}^+)+\rho_{\eps,\q}(a_{-\v}^-)}{2}}\times\nonumber\\
    &\frac{\rho_{\tau}(a_{\v})}{|(\tau^{k+1})'(a_{\v})|}\left(\frac{1}{2|(\tau^{k+1})'(a_{-\v})|}\tilde \rho_{\eps,\Lambda,k}(a_{-\v}^+)+\frac{1}{2|(\tau^{k+1})'(a_{-\v})|}\tilde \rho_{\eps,\Lambda,k}(a_{-\v}^-)\right),
\end{align*}
where
$$
\tilde{\rho}_{\eps,\Lambda,k}=\E(1_{ (\Psi_k^{\q}=\p^*+\v') \cap \bigcap_{i=1}^r \,^c(\Psi_{j_i}^{\q}=\p^*+\w_{j_i})}\rho_{\eps,\Lambda}(.)|I^{\{\q\}}).
$$
\end{lem}
\begin{proof}
We first consider the case when $S^{rec}=\emptyset$. Then by the expression \eqref{eq:qex}, $\lim_{\delta \to 0}q_{k,\delta}=0$. Therefore, $q_k=0$ for all $k \geq 0$ and $\theta=1$ in this case. We now consider the case when $S^{rec}\not=\emptyset$ and we prove that there is $k\in \N^*$ for which $\limsup_{\delta \to 0}q_{k,\delta}>0$. 

Recall that we already computed the expression of the denominator of $q_{k,\delta}$ in Lemma \ref{lemrefdenom} whose expression depends on the term $m_{\ZZ^d}(H_\delta(\p^*))$. To focus on the numerator we now introduce the quantity $\check q_{k,\delta}$ with $q_{k,\delta}=\frac{m_{\ZZ^d}(H_\delta(\p^*))}{\mu_{\eps,\p^*}(H_\delta(\p^*))}\check q_{k,\delta}$ and prove the existence of its limit when $\delta \to 0$. Using the fact that $H_\delta(\p^*)=\bigcup_{(\q,\v)\in \mathcal N_{\vp^*}}A_{\delta,\v}(\p^*)\cap A_{\delta,-\v}(\q)$, we get 
\begin{align}\label{eq:tildeqvv}
    \check q_{k,\delta}&:=\frac{1}{m_{\ZZ^d}(H_\delta(\p^*))}\nonumber\\
    &\sum_{\v\in V}\mu_{\eps,\p^*}\left(A_{\delta,\v}(\p^*)\cap A_{\delta,-\v}(\q) \cap T_{\eps,\p^*}^{-(k+1)}H_{\delta}(\p^*) \cap \bigcap_{i=1}^k T_{\eps,\vp^*}^{-i}X_{0,\delta}(\vp^*)\right)\nonumber\\
    & =\sum_{\v,\v'\in V^{+}}\hat q_{k,\delta}(\v,\v'),
\end{align}
where 
\begin{equation}\label{eqqvnum}
  \hat q_{k,\delta}(\v,\v')= \frac{1}{m_{\ZZ^d}(H_\delta(\p^*))}\mu_{\eps,\p^*}\left(G_{\v,\v'}\right),
\end{equation}
and 
$G_{\v,\v'}$ is a short notation for
\begin{align}
A_{\delta,\v}(\p^*)\cap A_{\delta,-\v}(\q) \cap T_{\eps,\p^*}^{-(k+1)}\left(A_{\delta,\v'}(\p^*)\cap A_{\delta,-\v'}(\q')\right) \cap \bigcap_{i=1}^k T_{\eps,\vp^*}^{-i}X_{0,\delta}(\vp^*). \label{eqqvvvprime}
\end{align}
where $\vq'$ stands for the element $\vp^*+\v'$.

According to Lemma \ref{lemnegnonrec}, $\lim_{\delta \to 0}\frac{1}{m_{\ZZ^d}(H_\delta(\p^*))}\mu_{\eps,\p^*}\left(G_{\v,\v'}\right)$ can be reduced to
\begin{align}
    &\lim_{\delta \to 0}\frac{1}{m_{\ZZ^d}(H_\delta(\p^*))}\mu_{\eps,\p^*}\left(G_{\v,\v'}\right)\nonumber\\
    &=\lim_{\delta \to 0}\frac{1}{m_{\ZZ^d}(H_\delta(\p^*))}\mu_{\eps,\p^*}\left(A_{\delta,\v}(\p^*)\cap\right.\nonumber\\
    &\left. T_0^{-(k+1)}A_{\delta,\v'}(\p^*) \cap A_{\delta,-\v}(\q)\cap T_0^{-(k+1)}A_{\delta,-\v'}(\q)\cap (\Psi_k^{\q}=\q)\cap\right.\nonumber\\
    &\left. \bigcap_{i=1}^r\, ^c\left(A_{\delta,\v}(\p^*)\cap T_0^{-j_i}A_{\delta,\w_{j_i}}(\p^*)\cap A_{\delta,-\v}(\q)\cap T_0^{-j_i}A_{\delta,-\w_{j_i}}(\q)\cap (\Psi_{j_i}^{\q}=\p^*+\w_{j_i})\right)\right).\label{eqtoneglig}
\end{align} 
Indeed, Lemma \ref{lemnegnonrec} implies that in the expression of $G_{\v,\v'}$ the respective sets $A_{\delta,\v}(\p^*)\cap T_{\eps,\p^*}^{-(k+1)}A_{\delta,\v'}(\p^*)$ can be replaced by\footnote{The latter corresponds to  $A_{\delta,-\v}(\q)\cap T_{\eps,\p^*}^{-k}A_{\delta,-\v'}(\q')\cap (\Psi_k^{\q}=\q')$ since $\pi_{\q'}\circ T_{\eps,\p^*}^{k+1}(\vx)=\tau^{k+1} x_\q=\pi_q \circ T_0^{k+1}(\vx)$.} $A_{\delta,-\v}(\q)\cap T_0^{-(k+1)}A_{\delta,-\v'}(\q')\cap (\Psi_k^{\q}=\q')$ and $T_{\eps,\p^*}^{-j}(H_\delta (\p^*))$ by
$$\bigcap_{i=1}^r\, ^c\left(A_{\delta,-\v}(\q)\cap T_0^{-j_i}A_{\delta,\w_{j_i}}(\q')\cap (\Psi_{j_i}^{\q}=\p^*+\w_{j_i})\right).$$
In particular Lemma \ref{lemnegnonrec} implies that the integers $k$ such that $\lim_{\delta}\tilde q_{k\delta}$ is non zero are all contained in $\mathbb{K}(\v,\v')$, thus
the expression of $\theta$ can be simplified into 
\begin{equation}\label{eq:simplified}
\theta=1-\sum_{\v,\v'\in V}\sum_{\KK(\v,\v')}\lim_{\delta\to 0} \frac{m_{\ZZ^d}(H_\delta(\p^*))}{\mu_{\eps,\p^*}(H_\delta(\p^*))} \hat q_{k,\delta}(\v,\v').  
\end{equation}
Note that since $\tau$ is a uniformly expanding map, the set $T_0^{-(k+1)}A_{\delta,-\v'}(\q)$ is included in $T_0^{-j_i}A_{\delta,\w_{j_i}}(\q)$ for any $1\leq i \leq r$. Thus 
\begin{align}
    \lim_{\delta \to 0}&\frac{1}{m_{\ZZ^d}(H_\delta(\p^*))}\mu_{\eps,\p^*}\left(G_{\v,\v'}\right)=\nonumber\\
=&\lim_{\delta \to 0}\frac{1}{m_{\ZZ^d}(H_\delta(\p^*))}\nonumber\\
&\mu_{\eps,\p^*}\left(A_{\delta,\v}(\p^*)\cap T_0^{-(k+1)}A_{\delta,\v'}(\p^*) \cap A_{\delta,-\v}(\q)\cap T_0^{-(k+1)}A_{\delta,-\v'}(\q)\right.\nonumber\\
&\left.\cap (\Psi_k^{\q}=\q') \cap \bigcap_{i=1}^r\, ^c(\Psi_{j_i}^{\q}=\p^*+\w_{j_i})\right).\label{eqrecurmeasur}
\end{align}
Since the measure $\mu_{\eps,\p^*}$ is decoupled at site $\p^*$, for $\delta$ small enough we can split the numerator of $\hat q_{k,\delta}(\v,\v')$ into two factors. 

\begin{align}
   & \lim_{\delta \to 0}\hat q_{k,\delta}(\v,\v')=\lim_{\delta \to 0}\frac{1}{m_{\ZZ^d}(H_\delta(\p^*))}\mu_{\eps,\p^*}\left(A_{\delta,\v}(\p^*)\cap T_0^{-(k+1)}A_{\delta,\v'}(\p^*)\right) \label{eqcentre1}\\
    &\mu_{\eps,\p^*}\left(A_{\delta,-\v}(\q)\cap T_0^{-(k+1)}A_{\delta,-\v'}(\q)\cap (\Psi_k^{\q}=\q') \cap \bigcap_{i=1}^r\, ^c(\Psi_{j_i}^{\q}=\p^*+\w_{j_i})\right)\label{eqcentre2}
\end{align}
Now we will treat the two quantities \eqref{eqcentre1} and \eqref{eqcentre2} separately starting with the quantity \eqref{eqcentre1}: 
\begin{align}
\mu_{\eps,\p^*}\left(A_{\delta,\v}(\p^*)\cap T_0^{-(k+1)}A_{\delta,\v'}(\p^*)\right)&=\int_I 1_{A_{\delta,\v}\cap \tau^{-(k+1)}A_{\delta,\v'}}\rho_{\tau}dm\nonumber\\
&\sim  \frac{\rho_{\tau}(a_{\v})}{|(\tau^{k+1})'(a_{\v})|} m(A_{\delta,\v}).\label{eqmucentre}
\end{align}
where we recall that $\rho_{\tau}(.)$ is the invariant density of the site dynamics $(I,\tau)$. Now we treat the factor \eqref{eqcentre2}. Since $\mu_{\eps,\p^*}$ has bounded variation, $\pi_{\Lambda}*\mu_{\eps,\p^*}$ is absolutely continuous with respect to $m_\Lambda$. Thus we only need to look at the following quantity :
\begin{align}
    \mu_{\eps,\p^*}&\left(A_{\delta,-\v}(\q)\cap T_0^{-(k+1)}A_{\delta,-\v'}(\q)\cap (\Psi_k^{\q}=\q') \cap \bigcap_{i=1}^r\, ^c(\Psi_{j_i}^{\q}=\p^*+\w_{j_i})\right)\nonumber\\
    &=\int 1_{A_{\delta,-\v}(\q)\cap T_0^{-(k+1)}A_{\delta,-\v'}(\q)}1_{(\Psi_k^{\q}=\q') \cap \bigcap_{i=1}^r\, ^c(\Psi_{j_i}^{\q}=\p^*+\w_{j_i})}\rho_{\eps,\Lambda}(.)dm
\end{align}
Notice that the function $1_{(\Psi_k^{\q}=\q') \cap \bigcap_{i=1}^r\, ^c(\Psi_{j_i}^{\q}=\p^*+\w_{j_i})}\rho_{\eps,\Lambda}(.)$ is of bounded variation. Therefore,
$$
\hat \rho_{\eps,\Lambda,k}=\E_{m_\q}(1_{ (\Psi_k^{\q}=\q') \cap \bigcap_{i=1}^r\, ^c(\Psi_{j_i}^{\q}=\p^*+\w_{j_i})}\rho_{\eps,\Lambda}(.)|I^{\{q\}})
$$
is a one dimensional bounded variation map and thus is cad-lag.  
 Thus,
\begin{align}
&\mu_{\eps,\p^*}\left( A_{\delta,-\v}(\q)\cap T_0^{-(k+1)}A_{\delta,-\v'}(\q)\cap (\Psi_k^{\q}=\q') \cap \bigcap_{i=1}^r\, ^c(\Psi_{j_i}^{\q}=\p^*+\w_{j_i})\right)\nonumber\\
&=\int1_{A_{\delta,-\v}\cap \tau^{-(k+1)}A_{\delta,-\v'}} \hat \rho_{\eps,\Lambda,k}(.)dm\nonumber\\
&\sim \hat \rho_{\eps,\Lambda,k}(a_{-\v}^+)m(A_{\delta,-\v}^+\cap \tau^{-(k+1)}A_{\delta,-\v'})+\hat \rho_{\eps,\Lambda,k}(a_{-\v}^-)m(A_{\delta,-\v}^-\cap \tau^{-(k+1)}A_{\delta,-\v'}).\label{eqestimatneighbor}
\end{align}
Where $A_{\delta,-\v}^+:=A_{\delta,-\v}\cap \{y,y\geq a_{-\v}\}$ (resp $A_{\delta,-\v}^-:=A_{\delta,-\v}\cap \{y,y\leq a_{-\v}\}$) and $\hat \rho_{\eps,\Lambda,k,s}(a_{-\v}^\pm)=\lim_{x\to a_{-\v}^{\pm}} \rho_{\eps,\Lambda,k,s}(x)$.\\
Notice that $\tau^{k+1}$ is piecewise $C^2$ and piecewise onto. Therefore, for $\delta >0$, $m(A_{\delta,\v}^-\cap \tau^{-(k+1)}A_{\delta,\v'})=m(A_{\delta,\v}^-\cap \tau^{-(k+1)}A_{\delta,\v'}^+)$ or $m(A_{\delta,\v}^-\cap \tau^{-(k+1)}A_{\delta,\v'})=m(A_{\delta,\v}^-\cap \tau^{-(k+1)}A_{\delta,\v'}^-)$.\\
Since $\tau^{k+1}$ is a $C^1$ map at $a_{\v}$ and $a_{-\v}$ then 
\begin{align*}
    m(A_{\delta,\v}^-\cap \tau^{-(k+1)}A_{\delta,\v'})&=m(A_{\delta,\v}^+\cap \tau^{-(k+1)}A_{\delta,\v'})\\
    &=\frac 1 {2|(\tau^{k+1})'(a_{\v})|} m(A_{\delta,\v'}).
\end{align*}
The above is justified as follows: first for $\delta$ small enough, $\tau^{-(k+1)}A_{\delta,\v'}^{\pm} \subset A_{\delta,\v}^{\pm} $ and there is a unique inverse branch for $\tau^{k+1}$ between $A_{\delta,\v}^{\pm}$ and $\tau^{-(k+1)}A_{\delta,\v'}^\pm$ that we denote $\tau_*$ and $I_m$ the associated interval in the partition $(I_j)_{j}$.
\begin{align*}
    m(A_{\delta,\v}^\pm\cap \tau^{-(k+1)}A_{\delta,\v'}^\pm)&=\int_{I_j} 1_{A_{\delta,\v'}^\pm}\circ \tau^{k+1}dm\\
    &=\int_I 1_{A_{\delta,\v'}^\pm}\frac 1 {|(\tau^{k+1})'\circ \tau_*|}dm\\
    &\sim\frac 1 {|(\tau^{k+1})'(a_\v)|}m(A_{\delta,\v'}^\pm)\sim\frac 1 {2|(\tau^{k+1})'(a_\v)|}m(A_{\delta,\v'}).    
\end{align*}
By plugging the above into equation \eqref{eqestimatneighbor} we get the estimate of the factor \eqref{eqcentre1}:
\begin{align*}
\mu_{\eps,\vp^*}&(A_{\delta,-\v}(\q)\cap T_0^{-(k+1)}A_{\delta,-\v'}(\q')\cap (\Psi_k^{\q}=\q') \cap \bigcap_{i=1}^r\, ^c(\Psi_{j_i}^{\q}=\w_{j_i}))\\
&\sim\frac 1 {2|(\tau^{k+1})'(a_\v)|} \left(\hat \rho_{\eps,\Lambda,k}(a_{-\v}^+)+\hat \rho_{\eps,\Lambda,k}(a_{-\v}^-)\right)m(A_{\delta,\v'}).
\end{align*}
We then deduce from the expression of $\hat q_{k,\delta}$ in \eqref{eqcentre1} and the respective behaviour of the term \eqref{eqcentre1}, \eqref{eqcentre2} found in \eqref{eqmucentre} that $\hat q_{k}(\v,\v'):=\lim_{\delta \to 0} \hat q_{k,\delta}(\v,\v')$ is well defined and explicitly given by 
\begin{align}
&\hat q_{k}(\v,\v')=\frac{1}{|\mathcal N_{\p^*}|}\nonumber\\
&\frac{\rho_{\tau}(a_{\v})}{|(\tau^{k+1})'(a_{\v})|}\left(\frac{1}{2|(\tau^{k+1})'(a_{-\v})|}\hat \rho_{\eps,\Lambda,k}(a_{-\v}^+)+\frac{1}{2|(\tau^{k+1})'(a_{-\v})|}\hat \rho_{\eps,\Lambda,k}(a_{-\v}^-)\right),\label{eq:limq_k}
\end{align}
with 
$$
\hat{\rho}_{\eps,\Lambda,k}=\E(1_{ (\Psi_k^{\q}=\p^*+\v') \cap \bigcap_{i=1}^r \,^c(\Psi_{j_i}^{\q}=\p^*+\w_{j_i})}\rho_{\eps,\Lambda}(.)|I^{\{q\}}).
$$
Using the above Lemma \ref{lemrefdenom} and \eqref{eq:simplified}, the quantity $\theta$ is then
$$
\theta=1-\sum_{\v,\v'\in V}\sum_{k\in \mathbb{K}(\v,\v')}q_{k}(\v,\v').
$$
\end{proof}
\begin{proof}[Proof of Theorem \ref{thm:main2}]
Using \eqref{eq:KLtheta} and Lemma \ref{lem:thetalemma} we obtain the first item of the lemma:
\begin{equation}\label{eq:asymp}
\lambda_{\varepsilon,\delta}=1-\mu_{\eps,\vp^*}(H_\delta(\vp^*))\cdot\theta(1+o(1)).
\end{equation} 
To prove the second item of the theorem, recall that the spectral data of $\LG_{\eps, \vp^*}$ do not depend on $\delta$ and 
$$\text{spec}(\LG_{\eps, \vp^*})\cap\{|z|>\gamma\}=\{1\},$$
where $\gamma\in(0,1)$ is as in \eqref{eq:contraction}. Choose, $r$ small enough, so that $\overline{B(1,r)}\cap \overline{B(0,\gamma)}=\emptyset$. Then, by the Keller-Liverani stability result \cite{KL99}, for $\delta$ small enough, $\frac{(1-r)-\gamma}{2}$ is a lower bound on the spectral gaps of $\LG_{\eps, \vp^*}$ and $\hat\LG_{\eps, \delta, \vp^*}$ for all $\delta\in[0,\delta^*]$ for small enough $\delta^*$. Therefore,
$$\kappa_N:=\sum_{k=N}^\infty\sup_{\delta\in[0,\delta^*]}\|\hat Q_{\eps,\delta, \vp^*}\|=\mathcal O\left(\left(\frac{1+r+\gamma}{2}\right)^N\right).$$
Recall
\begin{equation*} 
\eta_{\delta}=\sup_{\|\mu\|\le 1}\left|\left((\LG_{\eps,\p^*}-\hat\LG_{\eps,\delta,\p^*})\mu\right)(1)\right|,
\end{equation*} 
which was defined earlier in \eqref{eq:eta}. Let $N=\mathcal O\left(\log\eta_{\delta}\right)$. Then $N\eta_{\delta}+\mathcal O(\kappa_N)=\mathcal O\left(\eta_\delta\log\eta_{\delta}\right)$. Finally, let $\xi_{\delta}=\theta_{N,\delta}+\mathcal O(\kappa_N)$ where $\theta_{N,\delta}=1-\sum_{k=0}^{N-1} \lambda_{\eps,\delta}^{-k}q_{k,\delta}$.  Then the rest of the proof follows exactly as in \cite[Proposition 2]{K12}.
\end{proof}
\section{Proof of Theorem \ref{thm:main3}}\label{sec:proofthm3}
The proof is based on checking linear response of the following perturbed transfer operator
$$
\tilde \LG_{\delta,s}:=\LG_{\eps,\delta}(e^{is1_{H_\delta(\p^*)}}\cdot),
$$
where $s\in\mathbb R\setminus\{0\}$. To be precise about the definition of $\tilde{\mathcal{L}}_{\delta,s}(\cdot)$ we mean that for $\varphi\in\mathcal D_1$ and $\mu$ a Borel complex measure,
\begin{equation}\label{eq:optwist}
    \int \varphi d\tilde{\mathcal{L}}_{\delta,s}(\mu)=\int \varphi \circ T_{\eps,\delta} (e^{is1_{H_\delta(\p^*)}}) d\mu.
\end{equation}
Note that when $s=0$
$$\tilde \LG_{\delta,0}:=\LG_{\eps,\delta};$$ 
moreover, for any $s\in\mathbb R$ the iterates are given by $$\tilde{\LG}^n_{\delta,s}(\cdot)=\LG^n_{\eps,\delta}(e^{is\sum_{k=1}^n1_{H_\delta(\p^*)}\circ T_{\eps.\delta}^{k-1}}\cdot).$$
On the other hand, when $\delta=0$,
$$\tilde \LG_{0,s}=\LG_{\eps,\p^*}.$$
So we first show that for $\delta$ small enough $\tilde \LG_{\delta,s}$ and $\LG_{\eps,\p^*}$
satisfy assumptions (A1)-(A6) of \cite{KL09} in order to apply Theorem 2.1 of \cite{AFGV24}.
Note that by definition
$$|\tilde{\mathcal{L}}_{\delta,s}\mu|\le |\mu|;$$
Moreover, by Proposition \ref{propLasota-perturbed0} using in this case that $g_\delta=e^{is1_{H_\delta(\p^*)}}$, we obtain for $\eps>0$ small enough, there exist a $\sigma \in (0,1)$ and a $C>0$, independent of $\eps$, $\delta$ and $s$, such that for any $n\in \N$ and any $\mu \in \mathcal B$:
\begin{equation}\label{eq:LYtwist}
\|\tilde{\mathcal{L}}_{\delta,s}^n\mu\|\leq C\sigma^n\|\mu\|+C|\mu|.
\end{equation}

Note also that $\tilde{\mathcal{L}}_{\delta,s}$ can be decomposed as follows :
\begin{align}
\tilde{\mathcal{L}}_{\delta,s}(\cdot)&=(e^{is}-1)\mathcal{L}_{\eps,\delta}(1_{H_\delta(\p^*)}\cdot)+\mathcal{L}_{\eps,\delta}(\cdot)\label{eqpoissondecomp}\\
&=e^{is}\mathcal{L}_{\eps,\delta}(1_{H_\delta(\p^*)}\cdot)+\mathcal{L}_{\eps,\p^*}(1_{X_{0,\delta}(\p^*)}\cdot).\nonumber
\end{align}

We need to prove that $\tilde{\mathcal{L}}_{\delta,s}(\cdot)$ admits a spectral gap, for $\delta$ small enough. For this purpose we prove the following lemma.
\begin{lem}\label{lem:twistclose}
\begin{align}\label{eqlempoissonclose}
\sup_{\mu \in \B, \|\mu\|\leq 1}|\tilde{\mathcal{L}}_{\delta,s}(\mu)-\mathcal{L}_{\eps,\p^*}(\mu) |=\mathcal O({\delta})
\end{align}
and 
\begin{align}\label{eqlempoissonclose2}
\|\tilde{\mathcal{L}}_{\delta,s}(\mu_{\eps,\p^*})-\mathcal{L}_{\eps,\p^*}(\mu_{\eps,\p^*}) \|=\mathcal O({\delta}).
\end{align} 
\end{lem}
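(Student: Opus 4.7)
The plan is to use the decomposition from \eqref{eqpoissondecomp},
\begin{equation*}
\tilde{\mathcal{L}}_{\delta,s}(\mu)-\mathcal{L}_{\eps,\p^*}(\mu)=(e^{is}-1)\mathcal{L}_{\eps,\delta}(1_{H_\delta(\p^*)}\mu)+[\mathcal{L}_{\eps,\delta}-\mathcal{L}_{\eps,\p^*}]\mu,
\end{equation*}
together with the structural observation that $T_{\eps,\delta}$ and $T_{\eps,\p^*}$ coincide off $H_\delta(\p^*)$, which gives $[\mathcal{L}_{\eps,\delta}-\mathcal{L}_{\eps,\p^*}]\mu=[\mathcal{L}_{\eps,\delta}-\mathcal{L}_{\eps,\p^*}](1_{H_\delta(\p^*)}\mu)$. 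In this way both assertions reduce to controlling objects of the form $\mathcal{L}_{\bullet}(1_{H_\delta(\p^*)}\mu)$.

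For \eqref{eqlempoissonclose} I would use $|e^{is}-1|\le 2$ together with $|\mathcal{L}_{\bullet}\nu|\le|\nu|$ to bound each piece by a constant multiple of $|1_{H_\delta(\p^*)}\mu|$. For $\varphi\in\mathcal{D}_1$, decomposing $1_{H_\delta(\p^*)}$ as a union over $(\q,\v)\in\mathcal{N}_{\p^*}$ reduces $\int\varphi\,1_{H_\delta(\p^*)}d\mu$ to a sum of two-dimensional integrals of $1_{A_{\delta,\v}}\otimes 1_{A_{\delta,-\v}}$ against the bounded-variation marginal $h_\v(x_{\p^*},x_\q)$ of $\mu$. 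The Cauchy--Schwarz plus Sobolev step of the proof of Lemma \ref{lem:close} then yields $|1_{H_\delta(\p^*)}\mu|\le 2d\,\|\mu\|\,\delta$, which gives \eqref{eqlempoissonclose}.

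For \eqref{eqlempoissonclose2} the same splitting reduces the statement to
\begin{equation*}
\|\mathcal{L}_{\eps,\p^*}(1_{H_\delta(\p^*)}\mu_{\eps,\p^*})\|=\mathcal O(\delta)\quad\text{and}\quad\|\mathcal{L}_{\eps,\delta}(1_{H_\delta(\p^*)}\mu_{\eps,\p^*})\|=\mathcal O(\delta).
\end{equation*}
The first of these is precisely the content of the second item of Lemma \ref{lem:verirare}. For the second, I would rerun the same case analysis on the differentiation index $r\in\ZZ^d$: for $\varphi\in\mathcal{D}_1$ expand
\begin{equation*}
\int\partial_r\varphi\,d\mathcal{L}_{\eps,\delta}(1_{H_\delta(\p^*)}\mu_{\eps,\p^*})=\sum_{(\q,\v)\in\mathcal{N}_{\p^*}}\int(\partial_r\varphi)\circ T_{\eps,\delta}\,1_{A_{\delta,\v}}\circ\pi_{\p^*}\,1_{A_{\delta,-\v}}\circ\pi_\q\,d\mu_{\eps,\p^*},
\end{equation*}
condition on $(x_{\p^*},x_\q)$, and invoke the decoupling $\pi_{\{\p^*,\q\}}*\mu_{\eps,\p^*}=\mu_\tau\otimes(\mu_{\eps,\p^*})_\q$ provided by Lemma \ref{lemrefdenom}. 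When $r\notin\{\p^*,\q\}$ the conditional expectation has $L^\infty$ norm bounded by $\|\mu_{\eps,\p^*}\|$, and the remaining two-dimensional integral is $\mathcal O(\delta)$ by the Sobolev step. When $r\in\{\p^*,\q\}$ the derivative is transferred onto the uniformly BV one-dimensional density ($\rho_\tau$ or $(\mu_{\eps,\p^*})_\q$), again producing $\mathcal O(\delta)$.

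The main obstacle is precisely the case $r\in\{\p^*,\q\}$ under the fully coupled map $T_{\eps,\delta}$ rather than under the decoupled $T_{\eps,\p^*}$ treated in Lemma \ref{lem:verirare}(2): on the integration domain $H_\delta(\p^*)$ the collision rule swaps the coordinates $x_{\p^*}$ and $x_\q$, so $(\partial_r\varphi)\circ T_{\eps,\delta}$ is evaluated at a permuted argument. However, because the $\p^*$-marginal of $\mu_{\eps,\p^*}$ is exactly $\rho_\tau$, the product form $\rho_\tau\otimes(\mu_{\eps,\p^*})_\q$ is symmetric in the roles of $\p^*$ and $\q$ and is preserved under the swap; the one-dimensional integration-by-parts argument of Lemma \ref{lem:verirare}(2) therefore goes through verbatim with $(\mu_{\eps,\p^*})_{\{\p^*,\q\}}$ playing the same role as before. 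Combining both cases and summing over $(\q,\v)\in\mathcal{N}_{\p^*}$ yields the desired $\mathcal O(\delta)$ bound.
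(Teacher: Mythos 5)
Your decomposition $\tilde{\mathcal{L}}_{\delta,s}(\mu)-\mathcal{L}_{\eps,\p^*}(\mu)=e^{is}\mathcal{L}_{\eps,\delta}(1_{H_\delta(\p^*)}\mu)-\mathcal{L}_{\eps,\p^*}(1_{H_\delta(\p^*)}\mu)$, justified by $T_{\eps,\delta}=T_{\eps,\p^*}$ on $X_{0,\delta}(\p^*)$, is exactly the paper's starting point, and your treatment of \eqref{eqlempoissonclose} and of the $\mathcal{L}_{\eps,\p^*}$-term in \eqref{eqlempoissonclose2} matches the paper's (the latter is indeed Lemma~\ref{lem:verirare}(2)). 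For the $\mathcal{L}_{\eps,\delta}(1_{H_\delta}\mu_{\eps,\p^*})$ term you re-run the case analysis directly, whereas the paper introduces the post-swap map $\Phi^\tau_\q$ of \eqref{coupling1} to write $T_{\eps,\delta}=\Phi^\tau_\q\circ T_{\eps,\p^*}$ on $A_{\delta,\v}(\p^*)\cap A_{\delta,-\v}(\q)$, thereby reducing to the already-proved decoupled estimate with a relabelled test function; the two are equivalent, with the paper's being marginally more modular and yours more explicit. One caution on phrasing: the product $\rho_\tau\otimes(\mu_{\eps,\p^*})_\q$ is not literally symmetric under $x_{\p^*}\leftrightarrow x_\q$ (since $\rho_\tau\neq\rho_{\eps,\q}$ in general), and the swap is a post-composition (after $\tau$), not an exchange of the original coordinates. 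What actually saves you is more mundane: by the chain rule, on the collision set $(\partial_\q\varphi)\circ T_{\eps,\delta}=\tfrac{1}{\tau'(x_{\p^*})}\,\partial_{x_{\p^*}}[\varphi\circ T_{\eps,\delta}]$, so the integration by parts for $r=\q$ lands on the $x_{\p^*}$-variable (density $\rho_\tau$) while the remaining 1D integral over $x_\q$ supplies the $\mathcal{O}(\delta)$, and conversely for $r=\p^*$; both factors being bounded BV is what is used, not any literal symmetry of the density. With that clarification your argument closes.
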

\begin{proof}
To prove \eqref{eqlempoissonclose} 
Let $\mu\in \mathcal B$ and $\varphi\in\mathcal D_1$. By \eqref{eqpoissondecomp}, we have
\begin{align*}
    \int\varphi d\left[\tilde{\mathcal{L}}_{\delta,s}(\mu)-\mathcal{L}_{\eps,\p^*}(\mu)\right]&=e^{is}\int \varphi \circ T_{\eps,\delta}1_{H_\delta(\p^*)}d\mu -\int \varphi\circ T_{\eps,\p^*} 1_{H_\delta(\p^*)}d\mu.
\end{align*}
By the same reasoning as in \eqref{eqlemclose}, each of the above terms is $\mathcal O(\delta)$ above. Thus,
\begin{align*}
\sup_{\mu \in \B, \|\mu\|\leq 1}|\tilde{\mathcal{L}}_{\delta,s}(\mu)-\mathcal{L}_{\eps,\p^*}(\mu) |=\mathcal O({\delta}).
\end{align*}
To prove \eqref{eqlempoissonclose2} we fix $\phi \in \mathcal D$ a local map depending of coordinate in a finite set $\Lambda_0\subset \ZZ^d$ and $r\in \ZZ^d$,
\begin{align}\label{eqkellera3}
    \left[\tilde{\mathcal{L}}_{\delta,s}(\mu_{\eps,\p^*})-\mathcal{L}_{\eps,\p^*}(\mu_{\eps,\p^*})\right](\partial_{r}\phi)&=e^{is}\int \partial_{\q}\phi\circ T_{\eps,\delta} 1_{H_\delta(\p^*)}d\mu_{\eps,\p^*}\\
    &\hskip 0.5cm -\int \partial_{r}\phi\circ T_{\eps,\p^*} 1_{H_\delta(\p^*)}d\mu_{\eps, \p^*}.\nonumber
\end{align}
In the above expression, the second integral is bounded in Lemma \ref{lem:close}. 
 For the other integral, we introduce the following notation. We choose an element $(\q,\v)\in \mathcal{N}_{\p^*}$ and let $\Phi_{\q}^\tau$ be given by
\begin{equation}\label{coupling1}
(\Phi_{\q}^\tau(\vx))_{\p}=\begin{cases}
       x_{\q} \quad \text{if } x_{\p}\in \tau A_{\delta,\v}\text{ and } x_{\q}\in \tau A_{\delta,-\v} \text{ for some } \v\in V \text{ and } \p=\p^*\\
       x_{\p^*} \quad \text{if } x_\p\in \tau A_{\delta,\v}\text{ and } x_{\q}\in \tau A_{\delta,-\v} \text{ for some } \v\in V \text{ and }\p=\q\\
       x_\p  \quad\quad\text{otherwise.}
       \end{cases}
\end{equation}
Then one can split the first integral in \eqref{eqkellera3} into $|\mathcal{N}_{\p^*}|$ integrals of the following form:
\begin{align*}
    &\int \partial_{r}\phi\circ T_{\eps,\delta} 1_{A_{\delta,\v}}\circ\pi_{\p^*}1_{A_{\delta,-\v}}\circ\pi_{\q}d\mu_{\eps,\p^*}\\
    &=\int \partial_{r}\phi\circ \Phi_q^\tau\circ T_{\eps,\p^*} 1_{A_{\delta,\v}}\circ\pi_{\p^*}1_{A_{\delta,-\v}}\circ\pi_{\q}d\mu_{\eps,\p^*}
\end{align*}
for each $(\q,\v)\in \mathcal{N}_{\p^*}$. Let $\vx\in I^{\ZZ^d}$ we choose to represent such element as $\vx:=(\vx_1,\vx_2,\vx_3)\in I^{\{\p^*\}}\times I^{\{\q\}}\times I^{\ZZ^d\backslash \{\p^*,\q\}}$ 
notice then, that for $\Lambda:=\Lambda_0\cup \{\p+\v, \p\in \Lambda_0, \v \in V\}$ (similarly as in the proof of Lemma \ref{lem:verirare}) $\tilde \phi \circ\pi_{\Lambda} (\vx_\p,\vx_\q,\vx_{\neq}):= \phi(\vx_\q,\vx_\p,\vx_{\neq})$ satisfies $\partial_{r}\phi\circ \Phi_{\q}^\tau\circ T_{\eps,\p^*}(\vx)=\partial_{r}\tilde \phi\circ\pi_{\Lambda_0}\circ T_{\eps,\p^*}(\vx)$ for any $\vx\in I^{\ZZ^d}$ and $\tilde \phi \in \D_1$ with $\|\phi\|_\infty=\|\tilde \phi\|_\infty$. Thus, one can follow the same reasoning as in the second point of Lemma \ref{lem:verirare} with $\tilde \phi$ instead of $\varphi$. We then reach the same conclusion as equation \eqref{eqpartialphibv} : 

\begin{align*}
    &\left|\int \partial_r \tilde \phi\circ \pi_{\Lambda}\circ \tilde T_{\eps, \p^*} 1_{A_{\delta,-\v}}\circ \pi_{\q} \rho_{\eps,\Lambda_0\backslash \{\p^*\}}dm_{\Lambda_0\backslash \{\p^*\}}  1_{A_{\delta,\v}}\circ \pi_{\p^*}\rho_{\eps,\p^*}dm\right|\\
&\leq \|\mu_{\eps,\p^*}\|^2\delta.
\end{align*}

which brings us to the conclusion of the second statement
\begin{align*}
\|\tilde{\mathcal{L}}_{\delta,s}(\mu_{\eps,\p^*})-\mathcal{L}_{\eps,\p^*}(\mu_{\eps,\p^*}) \|=\mathcal O({\delta}).
\end{align*} 
\end{proof}
The uniform Lasota-Yorke inequality \eqref{eq:LYtwist}, together with \eqref{eqlempoissonclose} of Lemma \ref{lem:twistclose}, for $\delta$ sufficiently small, the Keller-Liverani stability result \cite{KL99} implies that the operator $\LG_{\delta,s}$ admits a spectral gap on $\mathcal B$. This proves assumptions (A1)-(A4) of \cite{KL09}.
Now let
\begin{align*}
\tilde{\eta}_\delta &=\sup_{\|\mu\|\leq 1}|\left(\LG_{\eps,\p^*}(\mu)-\tilde\LG_{s,\delta}(\mu)\right)(1)|;\\
\tilde \Delta_\delta &=\left(\LG_{\eps,\p^*}(\mu_{\eps,\p^*})-\tilde\LG_{s,\delta}(\mu_{\eps,\p^*})\right)(1).
\end{align*}
By Lemma \ref{lem:twistclose} and Lemma \ref{lemrefdenom} we have
\begin{cor}\label{cor:verirare}
The following holds   
\begin{enumerate}
\item $\tilde\eta_\delta\to 0$ as $\delta\to 0$;
\item $\exists\, C>0$ such that $\tilde\eta_\delta\cdot\|(\LG_{\eps,\p^*}-\tilde\LG_{s,\delta})\mu_{\eps,\p^*}\|\le C |\tilde\Delta_\delta|$.
\end{enumerate}
\end{cor}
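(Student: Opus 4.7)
The proof plan combines the mass-defect identity for the twisted operator with Lemma~\ref{lem:twistclose} and Lemma~\ref{lemrefdenom}, so in the end only bookkeeping is required.

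The starting point is the identity
\begin{equation*}
\mathcal L_{\eps,\p^*}(\mu) - \tilde\LG_{s,\delta}(\mu) = \mathcal L_{\eps,\p^*}(1_{H_\delta(\p^*)}\mu) - e^{is}\mathcal L_{\eps,\delta}(1_{H_\delta(\p^*)}\mu),
\end{equation*}
which I would derive by writing $\mathcal L_{\eps,\p^*}(\mu) = \mathcal L_{\eps,\p^*}(1_{X_{0,\delta}(\p^*)}\mu) + \mathcal L_{\eps,\p^*}(1_{H_\delta(\p^*)}\mu)$ and comparing with the second decomposition of $\tilde{\mathcal L}_{\delta,s}$ appearing right after \eqref{eqpoissondecomp}.

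To prove (1), I would evaluate this identity on the constant test function $\varphi\equiv 1$. Since every transfer operator preserves total mass, both terms reduce to $\mu(H_\delta(\p^*))$, so
\begin{equation*}
\big((\mathcal L_{\eps,\p^*} - \tilde\LG_{s,\delta})\mu\big)(1) = (1-e^{is})\,\mu(H_\delta(\p^*)),
\end{equation*}
and in particular $\tilde\Delta_\delta = (1-e^{is})\,\mu_{\eps,\p^*}(H_\delta(\p^*))$. Taking the supremum over $\|\mu\|\le 1$ and invoking Lemma~\ref{lem:close}, which yields $\sup_{\|\mu\|\le 1}|\mu(H_\delta(\p^*))| = \mathcal O(\delta)$, we obtain $\tilde\eta_\delta = \mathcal O(|1-e^{is}|\,\delta)\to 0$.

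For (2), the bounded-variation bound $\|(\mathcal L_{\eps,\p^*}-\tilde\LG_{s,\delta})\mu_{\eps,\p^*}\| = \mathcal O(\delta)$ is exactly the content of \eqref{eqlempoissonclose2} in Lemma~\ref{lem:twistclose}. Combined with the estimate on $\tilde\eta_\delta$ from (1), this gives
\begin{equation*}
\tilde\eta_\delta \cdot \|(\mathcal L_{\eps,\p^*}-\tilde\LG_{s,\delta})\mu_{\eps,\p^*}\| = \mathcal O\!\big(|1-e^{is}|\,\delta^2\big).
\end{equation*}
On the other hand, Lemma~\ref{lemrefdenom} gives $\mu_{\eps,\p^*}(H_\delta(\p^*)) \sim K\delta^2$ with $K>0$ for $\eps$ small enough, hence $|\tilde\Delta_\delta| \sim K|1-e^{is}|\,\delta^2$. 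Comparing the two displays yields the required inequality $\tilde\eta_\delta\cdot\|(\mathcal L_{\eps,\p^*}-\tilde\LG_{s,\delta})\mu_{\eps,\p^*}\| \le C|\tilde\Delta_\delta|$.

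Since the substantial analytic work has already been carried out in Lemma~\ref{lem:twistclose} and Lemma~\ref{lemrefdenom}, the only point requiring care is to verify that the implicit constant in~\eqref{eqlempoissonclose2} may be chosen independently of the real parameter $s$; this is immediate because the $s$-dependence in that proof enters only through the modulus-one factor $e^{is}$.
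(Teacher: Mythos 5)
Your proof is correct and follows essentially the same route as the paper: both evaluate $(\LG_{\eps,\p^*}-\tilde\LG_{s,\delta})\mu$ at the constant test function to get $\tilde\eta_\delta=|1-e^{is}|\eta_\delta$ and $\tilde\Delta_\delta=(1-e^{is})\Delta_\delta=(1-e^{is})\mu_{\eps,\p^*}(H_\delta(\p^*))$, and then combine the $\mathcal O(\delta)$ bound of Lemma~\ref{lem:twistclose} with the $\delta^2$-asymptotics of Lemma~\ref{lemrefdenom}. The one small point where you go further than the paper — checking that the factor $|1-e^{is}|$ cancels on both sides so that $C$ is uniform in $s$ — is a welcome clarification and is indeed correct.
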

Thus, by Corollary \ref{cor:verirare} conditions (A5)-(A6) of \cite{KL09} are satisfied. Moreover, notice that
\begin{align*}
\tilde{\eta}_\delta &=|e^{is}-1|\eta_\delta\\
\tilde \Delta_\delta(t)
&=(1-e^{is})\Delta_\delta=(1-e^{is})\mu_{\eps,\p^*}(H_\delta(\p^*)),
\end{align*}
where $\eta_\delta$ and $\Delta_\delta$ are defined in equations \eqref{eq:eta} and \eqref{eq:Delta} respectively.
\subsection{Existence of $\tilde\theta$}
We are left to check the existence of the following limit when $\delta \to 0$:
\begin{align}\label{eqpoissonqk}
&\tilde q_{k,\delta}(s):= \frac{((\LG_{\eps,\p^*}-\tilde{\LG}_{\delta,s})\tilde{\mathcal{L}}_{\delta,s}^k((\LG_{\eps,\p^*}-\tilde{\LG}_{\delta,s})\mu_{\eps,\p^*})(1)}{\tilde \Delta_\delta(s)}\nonumber\\
&=(1-e^{is})\frac{\int 1_{H_\delta(\p^*)}d\tilde{\mathcal{L}}_{\delta,s}^k((\LG_{\eps,\p^*}(1_{H_\delta(\p^*)}\cdot)-e^{is}\LG_{\eps,\delta}(1_{H_\delta(\p^*)}\cdot)\mu_{\eps,\p^*})}{\tilde \Delta_\delta(s)}\nonumber\\
&=\frac{\int 1_{H_\delta(\p^*)}\circ T_{\eps,\delta}^k e^{is\sum_{j=0}^{k-1}1_{H_\delta(\p^*)}\circ T_{\eps,\delta}^j}d((\LG_{\eps,\p^*}(1_{H_\delta(\p^*)}\cdot)-e^{is}\LG_{\eps,\delta}(1_{H_\delta(\p^*)}\cdot)\mu_{\eps,\p^*})}{\Delta_\delta}\\
&=\frac{1}{\Delta_\delta}\sum_{j=0}^ke^{isj}\mu_{\eps,\p^*}\left(T_{\eps,\p^*}^{-1}\left(T_{\eps,\delta}^{-k}H_\delta(\p^*)\cap \left(\sum_{r=0}^{k-1}1_{H_\delta(\p^*)}\circ T_{\eps,\delta}^r=j\right)\right)\cap H_{\delta}(\p^*)\right)\nonumber\\
&-\frac{1}{\Delta_\delta}\sum_{j=0}^ke^{is(j+1)}\mu_{\eps,\p^*}\left(T_\eps^{-k-1}H_\delta(\p^*)\cap \left(\sum_{r=1}^{k}1_{H_\delta(\p^*)}\circ T_{\eps,\delta}^r=j\right)\cap H_{\delta}(\p^*)\right)\nonumber\\
&=\sum_{j=0}^ke^{isj}\left(\beta_{k,\delta}^{(1)}(j)-e^{is}\beta_{k,\delta}^{(2)}(j)\right),
\end{align}
with 
$$
\beta_{k,\delta}^{(1)}(j):=\frac{\mu_{\eps,\vp^*}\left(H_\delta(\vp^*)\cap T_{\eps,\p^*}^{-1}\left(T_{\eps,\delta}^{-k}H_\delta(\vp^*)\cap \left(\sum_{i=0}^{k-1}1_{H_\delta(\p^*)}\circ T_{\eps,\delta}^i=j\right)\right)\right)}{\mu_{\eps,\vp^*}\left(H_\delta(\vp^*)\right)}.
$$
and
$$
\beta_{k,\delta}^{(2)}(j):=\frac{\mu_{\eps,\vp^*}\left(H_\delta(\vp^*)\cap T_{\eps,\delta}^{-(k+1)}H_\delta(\vp^*)\cap \left(\sum_{i=1}^k1_{H_\delta(\p^*)}\circ T_{\eps,\delta}^i=j\right)\right)}{\mu_{\eps,\vp^*}\left(H_\delta(\vp^*)\right)}.
$$ 

In what follows, we just treat the existence of a limit for $\beta_{k,\delta}^{(2)}(j)$, the proof for $\beta_{k,\delta}^{(1)}(j)$ follows the same path.
We introduce the maps $\tilde \Psi_k^{\q}$ as follows\footnote{For $\beta_{k,\delta}^{(2)}(j)$ one may use $\hat \Psi_k^{\vp}$ with $\hat \Psi_1^{\vp}=\Psi_1^{\vp}$ and $\hat \Psi_k^{\vp}(\vx)=\tilde \Psi_{k-1}^{\vp}(\vx)+\tilde \Psi_1^{\tilde \Psi_{k-1}^{\vp}(\vx)}(T_{\eps,\delta}^{k-2}\circ T_{\eps,\p^*})(\vx)$.} :
\begin{align}\label{formulpsidelta}
\left\{\begin{array}{r l c}
 \tilde \Psi_1^{\vp}(\vx)&=&\p+\v \text{ if } x_\p\in A_{\eps,\v}\text{ and } x_{\p+\v}\in A_{\eps,-\v} \text{ for some } \v\in V\\ 
\tilde \Psi_1^{\vp}(\vx)&=& \p \text{ otherwise}\\
\tilde \Psi_k^{\vp}(\vx)&=&\tilde \Psi_{k-1}^{\vp}(\vx)+\tilde \Psi_1^{\tilde \Psi_{k-1}^{\vp}(\vx)}(T_{\eps,\delta}^{k-1}\vx) \text{ if }k\neq 0.
\end{array}
\right.
\end{align}
Notice that $\tilde \Psi_k^{\vp}$ depends on $\delta$.

Proving the existence of $\lim_{\delta \to 0}  \beta_{k,\delta}^{(i)}(j,\v,\v')$ follows the same steps as what we have done for the existence of $\lim_{\delta \to 0} q_{k,\delta}$ in the proof of Lemma \ref{lem:thetalemma}. In other words we split $\beta_{k,\delta}^{(i)}(j)$ into the counterpart of \eqref{eq:tildeqvv}. 

\begin{align}
    \beta_{k,\delta}^{(i)}(j):=\sum_{\v,\v'\in V^{+}} \beta_{k,\delta}^{(i)}(j,\v,\v'),
\end{align}
where 
\begin{equation}
  \beta_{k,\delta}^{(i)}(j,\v,\v')= \frac{1}{\mu_{\eps,\p^*}(H_\delta(\p^*))}\mu_{\eps,\p^*}\left(\tilde G_{\v,\v'}\right),
\end{equation}
with 
$\tilde G_{\v,\v'}$ being the short notation for
\begin{align}
A_{\delta,\v}(\p^*)\cap A_{\delta,-\v}(\q) &\cap T_{\eps,\delta}^{-(k+1)}\left(A_{\delta,\v'}(\p^*)\cap A_{\delta,-\v'}(\q')\right)\nonumber\\ 
&\cap \left(\sum_{i=1}^k1_{H_\delta(\p^*)}\circ T_{\eps,\delta}^i=j\right). 
\end{align}
To prove the existence of $\lim_{\delta \to 0}  \beta_{k,\delta}(j,\v,\v')$ we will actually prove that for $\delta$ small enough $\E_{\mu_{\eps,\p^*}}\left(1_{\tilde G_{\v,\v'}}|I^{\p^*,\p^*+\v}\right)$ can be decomposed into
\begin{align}
    \E_{\mu_{\eps,\p^*}}\left(1_{\tilde G_{\v,\v'}}|I^{\p^*,\p^*+\v}\right)&=1_{A_{\delta,\v}(\p^*)}1_{\tau^{-(k+1)}A_{\delta,\v'}(\p^*)}\tilde \rho_{\eps,\p^*}\circ \pi_{\p^*}\\
    &\hskip 0.5cm 1_{A_{\delta,-\v}(\p^*+\v)}1_{\tau^{-(k+1)}A_{\delta,-\v'}(\p^*+\v)}\tilde \rho_{\eps,\p^*+\v}\circ\pi_{\p^*+\v}\nonumber\\
    &+o(\delta^2)\nonumber,
\end{align}
where $\pi_{\p^*}$ and $\tilde \rho_{\eps,\p^*+\v}$ are functions of bounded variation and they are independent of $\delta$. Once this done one can consider the limit $\lim_{\delta \to 0}  \beta_{k,\delta}(j,\v,\v')$ and obtain a formula similar to \eqref{eq:limq_k}.\\

Now notice that from the definition of the set $J_k$ associated to a couple $(a_{\v},a_{-\v})\in S^{rec}$, for $x \in A_{\delta,\v}(\p^*)\cap A_{\delta,-\v}(\q)$ the sum $\sum_{i=1}^k1_{H_\delta(\p^*)}\circ T_{\eps,\delta}^i(x)$ may be reduced to $\sum_{i=1}^r1_{H_\delta(\p^*)}\circ T_{\eps,\delta}^{j_i}(x)$ for $\delta$ small enough. 
Actually one can notice that Lemma \ref{lemnegnonrec} extends to that setting with maps $\tilde \Psi_{j_i}^{\q}(\vx)$ into the following lemma:
\begin{lem}\label{lemnegnonrec2}
For  any $(\q,\v) \in \mathcal N_{\vp^*}$, $k>0$ and any $(\q',\v') \in \mathcal N_{\vp^*}$
$$
\lim_{\delta \to 0}\frac{1}{\mu_{\eps,\p^*}(A_{\delta,\v}(\q))}\mu_{\eps,\p^*}\left(A_{\delta,\v}(\q)\cap T_{\eps,\vp^*}^{-k}A_{\delta,\v'}(\q')\cap (\tilde \Psi_k^{\q}\neq \q')\right)=0. 
$$
In addition and for any $j\leq k$,
\begin{align}
&\lim_{\delta \to 0}\frac{1}{\mu_{\eps,\p^*}(A_{\delta,\v}(\q))}\mu_{\eps,\p^*}\left(A_{\delta,\v}(\q)\cap T_{\eps,\vp^*}^{-k}A_{\delta,\v}(\q)\cap (\tilde \Psi_j^{\q}\neq \p^*)\cap (\tilde \Psi_j^{\p^*}\neq \p^*)\right)\nonumber\\
&=0.
\end{align}
\end{lem}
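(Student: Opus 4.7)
The plan is to adapt the proof of Lemma \ref{lemnegnonrec} to the fully coupled setting. The two principal modifications are that the index map $\tilde\Psi_k^{\q}$ is now defined via $T_{\eps,\delta}$ rather than $T_{\eps,\p^*}$, and that the second assertion involves two simultaneous non-return conditions on the index map.

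For the first statement, I would unpack the definition of $\tilde\Psi_k^{\q}$: by construction, if $\tilde\Psi_k^{\q}(\vx)\neq \q'-\q$, then the orbit of the value initially placed at site $\q$ is not located at site $\q'$ after $k$ iterations. Hence if in addition $T_{\eps,\vp^*}^{-k}\vx \in A_{\delta,\v'}(\q')$, some other site $r\neq \q$ must satisfy $\tau^k x_r \in A_{\delta,-\v'}$ (this is the coordinate whose $\tau$-orbit carries the value found at $\q'$ at time $k$, tracked through the sequence of swaps). The nearest-neighbour nature of the coupling $\Phi_\eps$ together with the fixed number of iterations $k$ bounds the candidate sites $r$ inside the finite box $B(\q',k+1)\setminus\{\q\}$. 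The event of interest is therefore contained in the finite union
$$\bigcup_{r\in B(\q',k+1)\setminus\{\q\}}\pi_{\q}^{-1}A_{\delta,\v}\cap \pi_{r}^{-1}(\tau^{-k}A_{\delta,-\v'}).$$

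Next I would estimate each cylinder's $\mu_{\eps,\p^*}$-mass via the same $(L^\infty)^*$ weak-$*$ compactness argument used in Lemma \ref{lemnegnonrec}. Denoting the bivariate marginal density by $\rho_{\{\q,r\}}$, the family
$$G_\delta(y):=\int \rho_{\{\q,r\}}(x,y)\,\frac{1_{A_{\delta,\v}}(x)}{m(A_{\delta,\v})}\,dx$$
is bounded in total variation (uniformly in $\delta$) by $\|\mu_{\eps,\p^*}\|$, so it admits a weak-$*$ limit point $G_0$ which is a positive measure absolutely continuous with respect to Lebesgue. This forces $\int G_\delta\,1_{\tau^{-k}A_{\delta,-\v'}}\,dy = o(1)$. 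After multiplying by $m(A_{\delta,\v})$, summing over the finitely many $r$, and dividing by $\mu_{\eps,\p^*}(A_{\delta,\v}(\q))\asymp m(A_{\delta,\v})$ (using Lemma \ref{lemrefdenom}), the first displayed limit follows.

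For the second assertion the two non-return constraints $\tilde\Psi_j^{\q}\neq \p^*$ and $\tilde\Psi_j^{\p^*}\neq \p^*$ together say that after $j$ iterations of $T_{\eps,\delta}$ neither of the values $x_\q$ and $x_{\p^*}$ is located at $\p^*$. Combined with the return event $T_{\eps,\vp^*}^{-k}A_{\delta,\v}(\q)$ (which imposes $\tau^k x_\q \in A_{\delta,\v}$), the same bookkeeping produces at least one auxiliary site $r\notin\{\p^*,\q\}$ such that $\tau^j x_r \in A_{\delta,-\w}$ for some $\w\in V$, giving an independent factor of order $\delta$. The estimate then proceeds verbatim with the same weak-$*$ compactness argument applied to the density of $\pi_{\{\q,r\}\ast}\mu_{\eps,\p^*}$. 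The main obstacle is the combinatorial bookkeeping that extracts the auxiliary coordinate $r$ from the non-return condition on $\tilde\Psi$; once the locality of $\Phi_\eps$ and the finiteness of $k$ (respectively $j$) are used to confine $r$ to a fixed finite box, the rest is a direct transcription of the proof of Lemma \ref{lemnegnonrec}.
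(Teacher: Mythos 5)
Your proposal follows the same approach as the paper, which itself disposes of this lemma in a single sentence by referring back to Lemma~\ref{lemnegnonrec} with the substitutions $\tilde\Psi$ for $\Psi$ and $T_{\eps,\delta}$ for $T_{\eps,\p^*}$: identify an auxiliary coordinate $r$ in a finite box around $\q'$ (respectively, around $\p^*$) whose $\tau$-orbit must land in a $\delta$-sized interval, reduce to a cylinder event on two coordinates, and run the same weak-$*$ compactness argument on the bivariate marginal. The minor imprecisions you carry over (for instance, treating the return event as directly imposing $\tau^k x_\q\in A_{\delta,\v}$, which tacitly assumes the value tracked at $\q$ has not been displaced) are already present in the paper's treatment of Lemma~\ref{lemnegnonrec}, so your write-up is a faithful elaboration of the intended argument rather than a different route.
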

The proof of the above lemma follows the exact same reasoning as Lemma \ref{lemnegnonrec} with $\tilde \Psi_j^{\p^*}$ instead of $\Psi_j^{\q}$ (and $T_{\eps,\delta}$ instead of $T_{\eps,\p^*}$).\\
So for $\delta$ small enough, the values taken by the sum $\sum_{i=1}^r1_{H_\delta(\p^*)}\circ T_{\eps,\delta}^{j_i}(x)$ (still for $x \in A_{\delta,\v}(\p^*)\cap A_{\delta,-\v}(\q)$) are entirely determined by the values taken by the map $(\tilde \Psi_{j_i}^{\p}(\vx),\tilde \Psi_{j_i}^{\q}(\vx))$ which index the site impacted at time $j_i$ by any small variation of the state of $\vx$ in site $\q$ and $\p$ at time $0$.
When 
 \begin{align*}
\sum_{i=1}^k1_{H_\delta(\p^*)}\circ T_{\eps,\delta}^i(x)=\sum_{i=1}^k1_{\{\tilde \Psi_{j_i}^{\p^*}(\vx),\tilde \Psi_{j_i}^{\q}\}=\{\p^*,\p^*+\w_{j_i}\}}.
\end{align*}
Therefore, one can divide the set $\left\{\sum_{i=1}^r1_{H_\delta(\p^*)}\circ T_{\eps,\delta}^{j_i}(x)=m \right\}$ into subsets of the form $W_\delta:=\bigcup_{P\in P_m\{j_1,\dots,j_r\}} \bigcap_{p_i\in P} T_{\eps,\delta}^{-p_i}H_{\delta}(\p^*)\bigcap_{p_i\notin P}T_{\eps,\delta}^{-p_i}\left(X_{0,\delta}(\p^*)\right)$ and expand it using Lemma \ref{lemnegnonrec2} and the fact that $\tau$ is a uniformly expanding map, we deduce that
\begin{align}
    &\lim_{\delta \to 0}\frac{1}{\mu_{\eps,\p^*}(H_\delta(\p^*))}\mu_{\eps,\p^*}\left(A_{\delta,\v}(\p^*)\cap T_{\eps,\delta}^{-(k+1)}A_{\delta,\v'}(\p^*)\cap W_\delta\right)\label{eqrefpoissimp1}\\
&=\lim_{\delta \to 0}\frac{1}{\mu_{\eps,\p^*}(H_\delta(\p^*))}\nonumber\\
&\mu_{\eps,\p^*}\left(A_{\delta,\v}(\p^*)\cap A_{\delta,-\v}(\p^*+\v)\cap T_0^{-(k+1)}A_{\delta,\v'}(\p^*)\cap T_0^{-(k+1)}A_{\delta,-\v'}(\p^*+\v)\right.\label{eqrefpoissimp2}\\
&\left.\hskip 1cm\bigcup_{P\in P_m\{j_1,\dots,j_r\}}\bigcap_{p_i\in P} (\tilde{\Psi}_{j_i}^{\q},\tilde{\Psi}_{j_i}^{\p^*})\in\{\p^*,\p^*+\w_{j_i}\}\right.\nonumber\\
&\left.\hskip 2cm \cap \bigcap_{p_i\notin P}\left((\tilde{\Psi}_{j_i}^{\q},\tilde{\Psi}_{j_i}^{\p^*})\notin\{\p^*,\p^*+\w_{j_i}\}\right)\right)\nonumber\\
&=\lim_{\delta \to 0}\frac{1}{\mu_{\eps,\p^*}(H_\delta(\p^*))}\nonumber\\
&\sum_{P\in P_m\{j_1,\dots,j_r\}}\mu_{\eps,\p^*}\left(A_{\delta,\v}(\p^*)\cap T_0^{-(k+1)}A_{\delta,\v'}(\p^*)\cap D_P\right)\label{eqrefpoissimp3}
\end{align}
with 
\begin{align*}
     D_P:=&\bigcap_{p_i\in P} \left( (\tilde{\Psi}_{j_{p_i}-j_{p_{i-1}}-1}^{\q}\left(\tilde T_{p_{i-1}+1}(\vx)\right)=\p^*+\w_{p_i})\right)\\
     &\cap \bigcap_{p\in \{j_1,\dots,j_r\}\backslash P}\left((\tilde{\Psi}_{j_p-j_{p_i}-1}^{\q}\left(\tilde T_{p_i+1}(\vx)\right)\neq\p^*+\w_{p_i})\right),
\end{align*}
where the set $P\in P_m\{j_1,\dots,j_r\}$ at the last line of \eqref{eqrefpoissimp3} is ordered so that $P=\{p_1,\dots,p_{|P|}\}$ with $p_1\leq \dots \leq p_{|P|}$ and for $p\in\{1,\dots,r\}$, $p_i$ is the largest element of $P$ so that $p>p_i$ and we define the map $\tilde T_{p_i+1}$ as
$$\tilde T_{p_i+1}:= \Phi\circ T_{\eps,\p^*}^{p_i-p_{i-1}}\circ \dots \circ \Phi \circ T_{\eps,\p^*}^{p_2-p_{1}}\circ  \Phi \circ T_{\eps,\p^*}^{p_1}\circ \Phi\circ T_{\eps,\p^*}.
$$
From equation \eqref{eqrefpoissimp3} we can deduce that whenever $\tilde S^{rec}=\emptyset$, then\footnote{Similarly, when $S^{rec}=\emptyset$, $\lim_{\delta \to 0} \beta_{k,\delta}^{(1)}(j)=0$.} 
$$
\lim_{\delta \to 0} \beta_{k,\delta}^{(2)}(j)=0.
$$
Notice in addition that on $\bigcap_{p\in P} T_{\eps,\delta}^{-p}H_{\delta}(\p^*)\bigcap_{p\notin P,p\leq k}T_{\eps,\delta}^{-p}\left(X_{0,\delta}(\p^*)\right)$, all the interactions with the site $\p^*$ occur at precise identified time, and from Lemma \ref{lemnegnonrec2}, we always have $\p^*\in \{\tilde{\Psi}_{j}^{\q},\tilde{\Psi}_{j}^{\p^*}\}$, so we only need to keep trace of one of those two maps at a time, this is what is displayed at the last line of equation \eqref{eqrefpoissimp3} with the map $\tilde{\Psi}_{p_i-p_{i-1}}^{\q}$.
The point here is that on such a set, this map $\tilde{\Psi}_{j_p-j_{p_i}-1}^{\q}\left(\tilde T_{p_i+1}(\vx)\right)$ actually does not depend on $\delta$ : $\tilde{\Psi}_{j_p-j_{p_i}-1}^{\q}\left(\tilde T_{p_i+1}(\vx)\right)=\Psi_{j_p-j_{p_i}-1}^{\q}\left(\tilde T_{p_i+1}(\vx)\right)$.
Therefore, for $P\in \mathcal{P}\{1,\dots,r\}$, the map $\tilde \rho:=\rho_{\eps}\E_{\mu_{\eps,\p^*}}\left(1_{D_P}|I^{\{\p^*,\q\}}\right)$ is independent of $\delta$ and our last step is to prove that it can be decomposed on $I^{\{\p,\q\}}$ into a linear combination of product $\rho_{\p^*}(x_{\p^*})\rho_{\q}(x_{\q})$ of one dimensional functions of bounded variation. To do this, we first introduce the following notion:\\

\begin{defn}\label{defncompoundcylindfunct}
We call cylinder map any finite linear combination of maps of the form $1_{A_1\times \dots \times A_{|\Lambda|}}\circ \pi_{\Lambda}$ where 
$\Lambda \subset \ZZ^d$ is a finite set and $A_1,\dots, A_{|\Lambda|}$ are a set of sub-intervals of $I^{\q}$ for each $\q\in \Lambda$.\\
\end{defn}
Notice that if $f$ is a cylinder map, then for any $p\in \mathbb{R}$,
$1_{f=p}$, $f\circ T_0$, $f\circ \Phi$, $\tilde \Psi$ and $f\circ \Phi_\eps$ are cylinder map. And for $\Lambda\subset \ZZ^d$, $\E_{\mu_{\eps,\p^*}}\left(f| I^{\Lambda}\right)$ is a cylinder map on $I^{\Lambda}$. Therefore, $\E_{\mu_{\eps,\p^*}}\left(1_{D_P}|I^{\{\p^*,\q\}}\right):=\sum_k 1_{R^P_k}(\vx_{\p^*})1_{B^P_k}(\vx_{\q})$ is a cylinder map and
\begin{align}
    &\E_{\mu_{\eps,\p^*}}\left(1_{\tilde G_{\v,\v'}}|I^{\p^*,\p^*+\v}\right)\\
    =&\sum_{P\in \mathcal{P}\{1,\dots,r\} }\sum_{k} \int 1_{A_{\delta,\v}}(x)1_{\tau^{-(k+1)}A_{\delta,\v}}(x)\rho_\tau(x)1_{R^P_k}(x)dx\\
    &\int 1_{A_{\delta,\v}}(y)1_{\tau^{-(k+1)}A_{\delta,\v}}(y)\rho_q(y)1_{B^P_k}(y)dy.
\end{align}
Since $\rho_q(\cdot)1_{B^P_k}(\cdot)$ corresponds to some piecewise continuous map, through the same reasoning as in \eqref{eqestimatneighbor}, the following limit exists  
\begin{align*}
    \lim_{\delta \to 0}\frac{1}{m(A_{\delta,\v}\times A_{\delta,-\v})}&\int 1_{A_{\delta,\v}}(x)1_{\tau^{-(k+1)}A_{\delta,\v}}(x)\rho_\tau(x)1_{R^P_k}(x)dx\\
    &\int 1_{A_{\delta,\v}}(y)1_{\tau^{-(k+1)}A_{\delta,\v}}(y)\rho_q(y)1_{B^P_k}(y)dy.
\end{align*}
Consequently, for $i\in \{1,2\}$, the following limit exists 
\begin{align*}
\beta_k^{(i)}(j,\v,\v'):=\lim_{\delta \to 0}\beta_{k,\delta}^{(i)}(j,\v,\v').
\end{align*}
Using the above Lemma \ref{lemrefdenom} and \eqref{eq:simplified}, we deduce that
\begin{align}
    \lim_{\delta \to 0}\beta_{k,\delta}^{(i)}(j)=\sum_{\v,\v'\in V^{+}} \beta_k^{(i)}(j,\v,\v'),
\end{align}
and the quantity $\tilde\theta$ is then given by
\begin{align*}
\tilde \theta(s)=&1-\sum_{k\geq 1}\tilde q_{k,\delta}(s) \\
=&1-\sum_{\v,\v'\in V}\sum_{k\in \mathbb{K}(\v,\v')}\sum_{j=0}^ke^{isj}\left(\beta_k^{(1)}(j,\v,\v')-e^{is}\beta_k^{(2)}(j,\v,\v')\right).\\
\end{align*}
\begin{proof}[Proof of Theorem \ref{thm:main3}]
By the above, we have shown that the dominant eigenvalue, $\lambda_{s,\delta}$, of $\tilde\LG_{s,\delta}$ has the following first order approximation
\begin{align*}
1-\lambda_{s,\delta}&=\tilde \Delta_\delta\tilde\theta(s)(1+o(1))\\
&=(1-e^{is})\mu_{\eps,\p^*}(H_{\delta}(\p^*))\tilde\theta(s)(1+o(1)),    
\end{align*}
that is
\begin{equation}\label{eq:exponantiating}
\lambda_{s,\delta}=e^{-\mu_{\eps,\p^*}(H_{\delta}(\p^*))(1-e^{is})\tilde\theta(s)(1+o(1))}.
\end{equation}
Recall from \eqref{eq:counting} the definition of the process 
$$Z_\delta(t)=\sum_{k=1}^{\left\lfloor\frac{t}{\mu_{\eps,\p^*}(H_\delta(\p^*))}\right\rfloor}1_{H_\delta(\p^*)}\circ T_{\eps,\delta}^k$$ and the fact that
\begin{equation}\label{eq:twistit}
\tilde{\LG}^{\left\lfloor\frac{t}{\mu_{\eps,\p^*}(H_\delta(\p^*))}\right\rfloor}_{\delta,s}(\cdot)=\LG^{\left\lfloor \frac{t}{\mu_{\eps,\p^*}(H_\delta(\p^*))}\right\rfloor}_{\eps,\delta}(e^{isZ_\delta(t)}\cdot).
\end{equation}
Therefore, using the spectral decomposition of $\tilde{\LG}$, \eqref{eq:exponantiating} and \eqref{eq:twistit} we obtain for any $\mu\in\B$ 
$$\lim_{\delta\to 0}\int e^{isZ_\delta(t)}d\mu=e^{-(1-e^{is})\tilde\theta(s)t}.$$
Thus, using the L\'evy Continuity Theorem we can argue as in the discussion after Theorem 2.1 of \cite{AFGV24} to conclude that the process $Z_\delta(t)$ converges in law to a compound Poisson process $Z(t)=\sum_{i=1}^{N(t)}X_i$, where $(N(t))_t$ is a Poisson process of intensity
$\theta Leb_{|[0,\infty)}$ and $(X_i)_{i\in \N}$ is an iid sequence whose characteristic function is given by $\phi_X(s)=\frac{(1-e^{is})\tilde \theta(s)}{\theta}+1$.
\end{proof}
\subsection{Justification of \eqref{eq:examplepoisson} in the example}\label{subsect:just}
Here we justify the formula in \eqref{eq:examplepoisson} in the example of subsection \ref{sub:example}. To compute the limit law of the compound Poisson process $Z(t):=\sum_{i=1}^{N(t)}X_i$ from Theorem \ref{thm:main3}, one has to compute the limit of  
$\lim_{\delta\to 0}\beta_{k,\delta}^{(i)}(j)$ for $i\in \{1,2\}$ as presented under equation \eqref{eqpoissonqk}. Since $\tilde S^{rec}=\emptyset$, we deduce from equation \eqref{eqrefpoissimp3} that $\lim_{\delta \to 0}\beta_{k,\delta}^{(2)}(j,\cdot,\cdot)=0$ for all $j\in \NN$. Although $S^{rec}\neq 0$, one can also notice that $\lim_{\delta\to 0}\beta_{k,\delta}^{(1)}(j)=0$ for $j\geq 1$.

To see it, fix $k\geq 0$ and choose any $\delta$ small enough so that $\tau^{m}x$ remains close to $a_{\v}$ for any $m\leq k+1$ and $\v\in V$ assume there is an intermediate collision with the site $\p^*$ at time $r\leq k$ from the neighboring site  $\q=\p^*+\v$. Then at time $r+m\geq r+1$ for a realisation $\vx$ of the probability in play in  
$\lim_{\delta\to 0}\beta_{k,\delta}^{(1)}(j)$, $\left(T_{\epsilon,\delta}^{m+r}(\vx)\right)_{\q}$ remains close to $\tau^{m}a_{\q+\v}=a_{\q+\v}$ for any $m$ such that $\tilde \Psi_k^{\vp^*}(\vx)=\q$ with $\tilde \Psi_k^{\vp^*}(\vx)$ given by the formula \eqref{formulpsidelta} and playing a similar role for $\beta_{k,\delta}^{(1)}(j)$ as $\Psi_k^{\q}$ for $q_{k,\delta}$. Thus there is no collision for such values of $m$. For the other values of $m\leq k+1-r$,  Lemma \ref{lemnegnonrec2} ensures that the relative weight of such collision would be negligible as $\delta\to 0$.
We thus can conclude that 
$$
\tilde \theta(s)=1-\sum_{\v,\v'\in V}\sum_{k\in {\KK(\v,\v')}}\beta_{k,\delta}^{(1)}(j,\v,\v')=\theta
$$
and that the law of the integer variable $X_i$ is given by the following characteristic function $\phi_{X}(s)=e^{is}$. In other words, $X_i$ is almost surely constant and equal to $1$ and $Z(t)\sim \mathcal P(\theta t)$, that is a Poisson law of intensity $\theta t$ with $\theta=1-5^{-4}$.
\section{An Auxiliary Uniform Lasota-Yorke inequality}\label{sec:LYaux}
In this section we prove an auxiliary Lasota-Yorke inequality which can be applied to different transfer operators in sections \ref{sec:pf1}, \ref{sec:pf2} and \ref{sec:proofthm3}.
For $\varphi\in\mathcal D$ and $\mu$ be a Borel complex measure, recall
$$   \int \varphi d\LG_{\eps,\delta}\mu=\int \varphi \circ T_{\eps,\delta} d\mu,
$$
where $T_{\eps,\delta}$ is the fully coupled map lattice defined in \eqref{eq:fullymap}, with $\delta\in(0,\eps]$ being the size of collision zones involved with site $\p^*$.
\begin{prop}\label{propLasota-perturbed0}
Let $\Lambda \subset \ZZ^d$ be a finite subset and $(A_i)_{i=1,\dots,s} \subset  (I^{\Lambda})^s$ be a finite family of sets with $A_i$ being a product of subintervals of $I$ whose Lebesgue measure depends on $\delta>0$. Let $a_i\in\mathbb R$, $|a_i|\le 1$, $i=1,\dots,s$ with $a_1\in \{0,1\}$. Define
$h_\delta=\sum_{i=1}^s a_i1_{A_i}$ and assume that whenever $\delta \geq \delta'$, $\{h_\delta=a_1\}\subset \{h_{\delta'}=a_1\}$ and  $\{h_{\delta'}=a_i\}\subset \{h_{\delta}=a_i\}$ for all other $i>1$. Let $g_\delta: I^{\mathbb Z^d}\to\mathbb R$ be given by $g_\delta(\textnormal{\vx}):=h_\delta(\textnormal{\vx}_{\Lambda})$ and consider the following operator
$$
\LG_{\eps,g_\delta}:=\LG_{\eps,\delta}(g_\delta\cdot). 
$$
Then $\exists$ $\sigma \in (0,1)$ and $C>0$, independent of $\eps$ and $\delta$, such that for any $n\in \N$ and any $\mu \in \mathcal B$:
\begin{align*}
\|\LG_{\eps,g_\delta}^n\mu\|\leq C\sigma^n\|\mu\|+C|\mu|.
\end{align*}
\end{prop}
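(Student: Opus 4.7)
The proof combines the Lasota-Yorke inequality for the pure coupled map lattice transfer operator $\LG_{\eps,\delta}$ with a uniform control on the multiplicative weight $g_\delta$. The plan is to proceed in three steps.

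\emph{Baseline Lasota-Yorke.} Following the arguments of Keller-Liverani \cite{KL05, KL06, KL09}, for $\eps$ small enough there exist $C_0 > 0$ and $\sigma_0 \in (0,1)$, independent of $\eps$ and of $\delta \in (0,\eps]$, such that $\|\LG_{\eps,\delta}^n \nu\| \le C_0 \sigma_0^n \|\nu\| + C_0 |\nu|$ for all $\nu \in \B$ and $n \ge 0$. This rests on the coordinate-wise uniform expansion of $T_0$ by the factor $\alpha > 1$ (which gives the single-site Lasota-Yorke for $\tau$) and on the fact that the coupling $\Phi_\eps$ differs from the identity on a set of measure $\mathcal{O}(\eps)$, making it a small BV perturbation of $T_0$. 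The constants are uniform in $\delta \le \eps$ because shrinking the collision zones at $\p^*$ only makes the perturbation smaller.

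\emph{Uniform control on the weight $g_\delta$.} Using the monotonicity hypothesis on the level sets $\{h_\delta = a_i\}$ together with $|a_i| \le 1$, one checks that $|g_\delta|_\infty \le 1$; moreover, in each coordinate direction $\q \in \Lambda$, the distributional derivative $\partial_\q g_\delta$ is a finite sum of boundary terms supported on at most $2s$ hyperplanes, with total variation uniformly bounded in $\delta$. Since by definition every $\mu \in \B$ has BV densities for its finite-dimensional marginals, an integration by parts in the $\q$-direction yields
$$
\|g_\delta \nu\| \le M \|\nu\|, \qquad |g_\delta \nu| \le |\nu|,
$$
for a constant $M$ independent of $\delta$.

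\emph{Combining and iterating.} Writing $\LG_{\eps,g_\delta}^n \mu = \LG_{\eps,\delta}^n(G_{n,\delta}\,\mu)$ with $G_{n,\delta} := \prod_{k=0}^{n-1} g_\delta \circ T_{\eps,\delta}^k$ gives the right structure, but the naive iteration of the multiplication bound yields a factor $M^n$ that is not compensated by $\sigma_0^n$ when $M > 1/\sigma_0$. To bypass this, I would establish directly an $N$-block Lasota-Yorke $\|\LG_{\eps,g_\delta}^N \mu\| \le \sigma_1 \|\mu\| + C_1 |\mu|$ with $\sigma_1 \in (0,1)$, for a large but fixed $N$: the $N$-fold composition $T_{\eps,\delta}^N$ expands each coordinate by at least $\alpha^N$, so when estimating $\int \partial_\q \varphi \cdot G_{N,\delta}\,d\mu$ one can absorb the boundary contributions of $G_{N,\delta}$ into the weak norm $|\mu|$, using that each constituent preimage $T_{\eps,\delta}^{-k} A_i$ is geometrically thin in the $\q$-direction and that $|G_{N,\delta}|_\infty \le 1$. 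Iterating the $N$-block inequality in blocks of length $N$ then produces the claimed $\|\LG_{\eps, g_\delta}^n \mu\| \le C\sigma^n \|\mu\| + C|\mu|$ with $\sigma = \sigma_1^{1/N}$.

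The main obstacle is the balance in the last step: the exponential growth (up to $s^n$) of the number of summands in the expansion of $G_{n,\delta}$ as a finite linear combination of indicators of boxes must be dominated by the CML contraction $\sigma_0^n$. The monotonicity hypothesis on $\{h_\delta = a_i\}$ is precisely what makes the constants genuinely uniform in $\delta$, so that the constant $M$ of the multiplication step and the integer $N$ needed for the block argument do not depend on $\delta$; the condition $a_1 \in \{0,1\}$ ensures that the dominant contribution to $g_\delta$ is either a clean indicator or null, keeping the $L^\infty$ bound tight and preventing additional growth factors in the iteration.
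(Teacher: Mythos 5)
Your proposal follows the same broad strategy as the paper's proof --- establish an $N$-block Lasota--Yorke estimate and then iterate in blocks --- and you correctly identify both the failure of the naive iteration of $\|g_\delta\nu\|\le M\|\nu\|$ and the role of the monotonicity hypothesis in securing $\delta$-uniformity. However, two genuine gaps remain. First, the ``main obstacle'' as you frame it cannot be overcome: you say the number of summands in the expansion of $G_{n,\delta}=\prod_{k=0}^{n-1}g_\delta\circ T_{\eps,\delta}^k$ may be as large as $s^n$ and must be dominated by $\sigma_0^n$, but if $s>1/\sigma_0$ this never happens, and neither monotonicity nor $a_1\in\{0,1\}$ repairs it. The missing combinatorial observation, which the paper's proof hinges on, is that if $g_\delta$ takes at most $r$ distinct values then the product $G_{n,\delta}$ takes at most $n^r$ distinct values --- a \emph{polynomial} count. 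Grouping $\LG_{\eps,g_\delta}^n=\sum_i a_i\,\LG_{\eps,\delta}^n\big(1_{\{G_{n,\delta}=a_i\}}\,\cdot\big)$ by value gives polynomially many terms, each satisfying a uniform Lasota--Yorke estimate with strong-norm coefficient $\sigma_0$, and one then picks a block length $m$ with $2(mn^*)^r\sigma_0^m<1$. Without this grouping (or an equivalent device) the iteration you describe does not close.

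Second, the block estimate $\|\LG_{\eps,g_\delta}^N\mu\|\le\sigma_1\|\mu\|+C_1|\mu|$ is asserted but never actually obtained. ``Absorbing the boundary contributions of $G_{N,\delta}$ into the weak norm'' is precisely where the work lies: since $\B$ is a space of measures tested against $\mathcal D_1$, one cannot simply pair $\mu$ with the distributional derivative of a discontinuous integrand. The paper builds a continuous test function $\tilde\psi=\psi-\psi^*$ by subtracting a piecewise-linear interpolant $\psi^*$ (vanishing on the level set, matching the integrand on its boundary) whose slope is bounded by $|\varphi|_\infty/l$, where $l$ is the minimal $\q'$-gap between connected components of the relevant level set of $G_{N,\delta}$ (and of the partition of $C^2$-regularity). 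The monotonicity hypothesis enters precisely to guarantee that $l$ stays bounded away from $0$ as $\delta\to0$: for $i>1$, $\{h_{\delta'}=a_i\}\subset\{h_{\delta}=a_i\}$ when $\delta\geq\delta'$, so the level-set components shrink with $\delta$ and the gaps only grow, giving $l$ uniform in $\delta$ and hence $C_{n^*}$ uniform in $\delta$. Your attribution of $a_1\in\{0,1\}$ to ``keeping the $L^\infty$ bound tight'' also misreads its role: it is what lets the level sets of the iterated product $G_{n,\delta}$ inherit the required monotonicity from those of $h_\delta$, since only then does the default value $a_1$ propagate cleanly ($a_1^n=a_1$) through the product.
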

\begin{proof}
Notice that 
\begin{align}\label{eq:auxop}
    \LG_{\eps,g_\delta}^n(\cdot)=\LG_{\eps,\delta}^n\left(\prod_{k=0}^{n-1}g_\delta \circ T^k_{\eps,\delta}\right),
\end{align}
and that if $g_\delta$ can take at most $r$ different values then the map $\prod_{k=0}^{n-1}g_\delta \circ T^k_{\eps,\delta}$ 
can take at most $n^r$ different values that we denote $a_1,\dots,a_{m_r}$ with $m_r\leq n^r$.

We first prove the following statement. Let $b\in \{0,1\}$, there is $0<\sigma_0<\frac12$, $n^*\in \mathbb N$ and $C_{n^*}>0$ such that for any $a_i \in \text{Image}(g_{\delta}^n)$, 
\begin{align}\label{eqlymauv}
    \|\LG_{\eps, \delta}^{n^*}\left(1_{B_{\delta,n^*,a_i}^b}\mu \right)\|\leq 2\sigma_0\|\mu\|+C_{n^*}|\mu|,
\end{align}
with $B_{\delta,n,a_i}^0:=\{g_{\delta}^n=a_i\}$ and $B_{\delta,n,a_i}^1:=^c\{g_{\delta}^n=a_i\}$
and notice that
\begin{align}\label{eq:auxop}
    \LG_{\eps,g_\delta}^n(\cdot)=\sum_{i=1}^{m_r}a_i\LG_{\eps,\delta}^n\left(1_{B_{\delta,n,a_i}^0}\cdot\right).
\end{align}
Let
$$\LG_{\delta,n,a_i,0}':=\LG_{\eps, \delta}^n\left(1_{B_{\delta,n,a_i}^0}\cdot\right) \text{ and }\LG_{\delta,n,a_i,1}':=\LG_{\eps, \delta}^n\left(1_{^c\{g_{\delta}^n=a_i\}}\cdot\right).$$
We start by proving \eqref{eqlymauv} for the operator $\LG_{\delta,n,a_i,0}'$ for $i\neq 1$. We will eventually prove it for any index $i$. 
We introduce
$$\Delta_{\eps,\q,\v}:=A_{\eps,\v}(\q)\cap A_{\eps,-\v}(\q+\v)$$

and 
$$\Delta_{\eps,\vq,0}:=\, ^c\left(\bigcup_{\v\in V}\Delta_{\eps,\q,\v}\right).$$
Let $\varphi \in \mathcal D_1$. If $\vx \in \Delta_{\eps,\vq,0}$, then
\begin{align*}
\partial_{\q} \left(\frac{\varphi\circ T_{\eps,\delta}}{\tau'\circ \pi_\q}\right)=\partial_{\q} \left(\varphi\circ T_{\eps,\delta} \right)\frac 1 {\tau'\circ \pi_\q}- \left(\varphi\circ T_{\eps,\delta} \right)\frac {\tau''\circ\pi_\q} {(\tau')^2\circ \pi_\q}.
\end{align*}
Note that we use the assumption that the site dynamics $\tau$ is piecewise onto. On the other hand if $\vx\in \Delta_{\eps,\q,\v}$ then
\begin{align*}
\partial_{\q+\v} \left(\frac{\varphi\circ T_{\eps,\delta}}{\tau'\circ \pi_{\q+\v}}\right)&=\partial_{\q+\v} \left(\varphi\circ T_{\eps,\delta} \right)\frac 1 {\tau'\circ \pi_{\q+\v}}- \left(\varphi\circ T_{\eps,\delta} \right)\frac {\tau''\circ\pi_{\q+\v}} {(\tau')^2\circ \pi_{\q+\v}}.
\end{align*}
Moreover, 
\begin{align*}
\partial_{\q}(\varphi \circ T_{\eps,\delta})=\left\{
\begin{array}{l r c}
\partial_{\q} \varphi \circ T_{\eps,\delta} \cdot \tau'\circ \pi_\q \text{ if } \vx\notin \Delta_{\eps,\q,\v}\\
\partial_{\q+\v} \varphi \circ T_{\eps,\delta} \cdot\tau'\circ \pi_\q \text{ otherwise.}
\end{array}\right.
\end{align*}
Since $\Delta_{\eps,\q,\v}=\Delta_{\eps,\q+\v,-\v}$, the identity applied on the expansion above can be written as:
\begin{align*}
\partial_\q\varphi \circ T_{\eps,\delta}=\left\{
\begin{array}{l r c}
\partial_\q \left( \frac{\varphi \circ T_{\eps,\delta}} {\tau'\circ \pi_\q}\right)+\left(\varphi\circ T_{\eps,\delta} \right)\frac {\tau''} {\tau'^2}\circ \pi_\q \text{ if } \vx\notin \Delta_{\eps,\q,\v}\\
\partial_{\q+\v} \left( \frac{\varphi \circ T_{\eps,\delta}} {\tau'\circ \pi_{\q+\v}}\right)+\left(\varphi\circ T_{\eps,\delta} \right)\frac {\tau''} {\tau'^2}\circ\pi_{\q+\v} \text{ otherwise.}
\end{array}\right.
\end{align*}
Thus, we recursively get, for $n \in \N$, and $\v_1,\dots,\v_n \in V \cup \{0\}$ and any
$\vx\in \Delta_{\eps,n,\q,\v_1,\dots,\v_{n-1}}:=\bigcap_{k=0}^{n-1} T_{\eps,\delta}^{-k}\Delta_{\eps,\q+\sum_{i=1}^{k}\v_i,\v_{k+1}}$

\begin{align*}
\partial_\q\varphi \circ T_{\eps,\delta}^n=
\begin{array}{l r c}
\partial_{\q+\sum_{i=1}^n\v_i} \left( \frac{\varphi \circ T_{\eps,\delta}^n} {(\tau^{n})'\circ \pi_{\q+\sum_{i=1}^n\v_i}}\right)+\left(\varphi\circ T_{\eps,\delta}^n \right)\frac {(\tau^{n})''} {((\tau^{n})')^2}\circ \pi_{\q+\sum_{i=1}^n\v_i} 
\end{array}
\end{align*}
Notice first that for any two distinct tuples $(\v_1,\dots,\v_n)$ and $(\v_1',\dots,\v_n')$, $\Delta_{\eps,n,\q,\v_1,\dots,\v_n}\cap\Delta_{\eps,n,\q,\v_1',\dots,\v_n'}=\emptyset$
and that any element $\q'=\q+\sum_{i=1}^n\v_i$ lies in a $d$-dimensional box $G_\q^n$ around $\q$ with $|G_\q^n|=2dn+1$. Thus, denoting 
$$\tilde \Delta_{\eps,n,\q,\q',\v}:=\bigcup_{(\v_1,\dots,\v_n),\q+\sum \v_i=\q'}\Delta_{\eps,n,\q,\v_1,\dots,\v_n,\v},$$
we have
\begin{align}\label{eq:comp}
&\int  \partial_\q\varphi d{\LG_{\delta,n,a_i,0}'\mu}= \int \partial_\q\varphi \circ T^n_{\eps,\delta} 1_{ { B_{\delta,n,a_i}^0}}d\mu\\\notag
&= \sum_{\q' \in G_\q^n}\sum_{\v\in V\cup\{0\}}\alpha^{-n}\int_{\tilde\Delta_{\eps,n,\q,\q',\v}\cap {B_{\delta,n,a_i}^0}} \partial_{\q'} \left(\alpha^n \frac{\varphi \circ T_{\eps,\delta}^n} {(\tau^{n})'\circ \pi_{\q'}}\right)d\mu\\\notag
&+\int_{\tilde\Delta_{\eps,n,\q,\q',\v}\cap  { B_{\delta,n,a_i}^0}}\left(\varphi\circ T_{\eps.\delta}^n \right)\frac {(\tau^{n})''} {((\tau^{n})')^2}\circ \pi_{q'}d\mu,
\end{align}
where we recall that $\alpha>1$ and $|\tau'(x)|\ge \alpha>1$.
The second term of \eqref{eq:comp} can be easily bounded by a constant times the weak norm of $\mu$. We now deal with the first term of \eqref{eq:comp}.  Let $(I_k)_{k\in \N}$ be the partition of $I$ into intervals of $C^2$-regularity for $\tau^n$ such that $I_k$ and $I_{k+1}$ are consecutive intervals and let $K_i:=\bigcup_{k}I_{2k+i}$ for $i\in \{0,1\}$.  We have
\begin{equation*}
\begin{split}
    &\sum_{\q' \in G_\q^n}\sum_{\v\in V\cup\{0\}}\alpha^{-n}\int_{\tilde\Delta_{\eps,n,\q,\q',\v}\cap { B_{\delta,n,a_i}^0}} \partial_{\q'} \left(\alpha^n \frac{\varphi \circ T_{\eps,\delta}^n} {(\tau^{n})'\circ \pi_{\q'}}\right)d\mu\\
    &=\alpha^{-n}
    \sum_{\q' \in G_\q^n}\sum_{\v\in V\cup\{0\}}\sum_{i\in \{0,1\}}\int{\bf 1}_{\tilde\Delta_{\eps,n,\q,\q',\v}\cap { B_{\delta,n,a_i}^0}\cap C_{i}^\q} \partial_{\q'} \left(\alpha^n \frac{\varphi \circ T_{\eps,\delta}^n} {(\tau^{n})'\circ \pi_{\q'}}\right)d\mu,
\end{split}
\end{equation*}
 where   
 $$C_{i}^\q:=\{\vx\in I^{\ZZ^d}, x_\q \in K_i\}.$$
Let
$$\psi_{\q',n,i}:=\alpha^n\frac{\varphi \circ T_\eps^n} {(\tau^{n})'\circ \pi_{\q'}}1_{ \tilde\Delta_{\eps,n,\q,\q',\v_n}\cap { B_{\delta,n,a_i}^0}\cap C_{i}^{\q'} }.$$
Notice that $\psi_{\q',n, i}$ is continuous on its non-zero set since $\tau$ is piecewise $C^2$ and piecewise onto. To build a good test function out of  $\psi_{\q',n, i}$, let 
$$\tilde \psi_{\q', n, i}=\psi_{\q', n, i}-\psi^*_{\q',n, i}$$
where, for any fix $\vx_{\neq \q'}\in I^{\mathbb Z^d \backslash \{\q'\}}$, $\psi^*_{\q',n,i}$ is piecewise linear   
in $x_{\q'}$, with $\psi^*_{\q', n, i}=0$ on $\tilde\Delta_{\eps,n,\q,\q',\v_n}\cap  { B_{\delta,n,a_i}^0}\cap C_{i}^{\q'}$ and $\psi^*_{\q', n, i}(x)=\alpha^n\frac{\varphi \circ T_\eps^n} {(\tau^{n})'\circ \pi_{\q'}}(\vx)$ if $x\in \partial_{\q'}\tilde\Delta_{\eps,n,\q,\q',\v_n}\cap  { B_{\delta,n,a_i}^0}\cap C_{i}^{\q'}$ where $\partial_{\q'}\tilde\Delta_{\eps,n,\q,\q',\v_n}\cap { B_{\delta,n,a_i}^0}\cap C_{i}^{\q'}$ is the boundary of the set $\tilde\Delta_{\eps,n,\q,\q',\v_n}\cap { B_{\delta,n,a_i}^0}\cap C_{i}^{\q'}$ for the $x_{\q'}$ coordinates. Therefore, by its definition, the function $\tilde{\psi}_{\q',n,i}\in \mathcal D_1$ and $|\partial_{\q',n,i}\psi^*|\leq \frac{|\varphi|_\infty}{l_{\q',n^*,a_1,0,\delta}}$ where $l_{\q',n^*,a_1,0,\delta}$ is the minimal distance between two connected components of $C_{\q',0}$ and $C_{\q',1}$ where $C_{\q',i}:=\{x_{\q'},(x_{\q'},\vx_{\neq \q'})\in \tilde\Delta_{\eps,n,\q,\q',\v_n}\cap  { B_{\delta,n,a_i}^0}\cap C_{i}^{\q'}\}$. By construction, there is a finite number of connected subsets of $C_{\q',i}$, thus $l_{\q',n^*,a_1,0,\delta}>0$. Consequently,
\begin{align}\label{eq:LYF}
\int\partial_{\q'} \psi_{\q',n,i}d\mu &=\int \partial_{\q'}\tilde{\psi}_{\q',n,i}d\mu+\int \partial_{\q'}\psi^*_{\q',n,i}d\mu\\\notag
&\leq \|\mu\|+\frac{1}{l_{\q',n^*,a_1,0,\delta}}|\mu|.
\end{align}
Notice that when $g_\delta :=1$ and setting $a_1:=1$ then the set $B_{\delta,n,a_i}^0=I^{\mathbb Z^d}$ and the distance $l_{\q',n^*,a_1,0,\delta}$ does not depend on $\delta$. Thus we obtain from the equation above the uniform Lasota-Yorke iequality for $\LG_{\eps,\delta}$ : There is $C>0$ such that for any $n\in \N$ and $\mu \in \B$,
\begin{align}\label{eqlyvanil}
    \|\LG_{\eps,\delta}^n\mu\|\leq \sigma_0^n\|\mu\|+C|\mu|.
\end{align}

Now, choose $n^*\in \N$ large enough so that $2\alpha^{-n^*} (2d+1)|G_{\q'}^{n^*}|<\sigma_0<\frac 1 2$ and use \eqref{eq:LYF} in inequality \eqref{eq:comp} to obtain for any couple $(a_i,0)$ with $i\neq 1$,
\begin{equation*}
\begin{split}
&\int \partial_\q\varphi d{\LG_{\delta,n^*,a_i,0}'}\mu\\
&\leq\sigma_0\|\mu\|+2 (2d+1)|G_{\q'}^{n^*}|\left(\frac{1}{l_{\q', n^*,a_i,0,\delta}}+2\left|\frac {(\tau^{n^*})''} {((\tau^{n^*})')^2}\right|_\infty\right)|\mu|\\
&\le\sigma_0\|\mu\|+ C_{n^*}|\mu|,
\end{split}
\end{equation*}
where $C_{n^*}>0$ can be chosen independently of $\eps$ and $\delta$ , for $\eps>0$ and $\delta$ sufficiently small. Indeed, for such couple $(a_i,b)$, for any $\delta \leq \delta'$, $B_{\delta,n,a_i}^0\subset B_{\delta',n,a_i}^0$ are made of shrinking subintervals, and thus eventually $l_{\q',n^*,a_i,0,\delta}\geq l_{\q',n^*,a_i,0,\delta'}$. 
Similarly, since for any $\delta \leq \delta'$ and $B_{\delta,n,a_1}^1\subset B_{\delta',n,a_1}^1$, and thus $l_{\q',n^*,a_1,1,\delta}\geq l_{\q',n^*,a_1,1,\delta'}$ we also obtain the above Lasota-Yorke inequality for $(a_1,1)$,
\begin{equation*}
\begin{split}
&\int \partial_\q\varphi d{\LG_{\delta,n^*,a_1,1}'}\mu\\
&\leq\sigma_0\|\mu\|+2 (2d+1)|G_{\q'}^{n^*}|\left(\frac{1}{l_{\q',n^*,a_1,1,\delta}}+2\left|\frac {(\tau^{n^*})''} {((\tau^{n^*})')^2}\right|_\infty\right)|\mu|\\
&\le\sigma_0\|\mu\|+ C_{n^*}|\mu|,
\end{split}
\end{equation*}

We deduce from equation \eqref{eqlymauv} and  \eqref{eqlyvanil} that

\begin{align}\label{eqlyb1}
\|\LG_{\delta,n^*,a_1,0}^{n^*}\mu\|&\le \|\LG_{\eps,\vp^*}\mu-\LG_{\delta,n^*,a_1,1}\mu\|\nonumber\\
&\le 2\sigma_0\|\mu\|+ 2C_{n^*}|\mu|.
\end{align}

Now we choose $m$ such that $2(mn^*)^r\sigma_0^m<1$ and sum the $m$-iterates of inequality \eqref{eqlymauv} over every $a_i$ to get a Lasota-Yorke inequality uniform in $\delta$. There is some constant $K>0$ such that for any $\mu \in \B$, 
\begin{align*}
    \|\LG_{\eps,g_\delta}^{mn^*}\mu\|&\leq \sum_{i\leq m_{mn^*}}|a_i|\|\LG_{\delta,n^*,a_i,0}^{n^*}\mu\|\\
    &\leq 2(mn^*)^r\sigma_0^m \|\mu\|+ 2K(mn^*)^rC_{n^*}|\mu|.
\end{align*}
Thus, iterating the above inequality, for any $n\in \mathbb N$ and $\mu\in \mathcal B$,
$$\|\LG_{\eps,g_\delta}^{n}\mu\|\le  C \sigma^n\|\mu\|+ C|\mu|,$$
for some $\sigma\in(0,1)$ and $C>0$.
\end{proof}

\end{document}